\title{Comparison of some Entropy Conservative Numerical Fluxes for the Euler
       Equations}
\author{Hendrik Ranocha}
\date{28th August 2017}
\theoremstyle{plain}
  \newtheorem{theorem}{Theorem}[section]
  \newtheorem{lemma}{Lemma}[section]
  \newtheorem{corollary}{Corollary}[section]
\theoremstyle{definition}
  \newtheorem{remark}{Remark}[section]
  \newtheorem{definition}{Definition}[section]
  \newtheorem{procedure}{Procedure}[section]
  \pgfplotsset{compat=1.11}
\renewcommand{\vec}[1]{\underline{#1}}
\NewDocumentCommand{\mat}{mo}{%
  \IfValueTF{#2}{%
    \underline{\underline{#1}}{#2}
  }{%
    \underline{\underline{#1}}\,
  }%
}
\newcommand{\diag}[1]{\operatorname{diag}\left(#1\right)}
\newcommand{\scp}[2]{\left\langle{#1,\, #2}\right\rangle}
\newcommand{\inv}[1]{{#1}^{-1}}
\newcommand{\vect}[1]{\begin{pmatrix} #1 \end{pmatrix}}
\newcommand{\I}{\operatorname{I}}
\newcommand{\fnum}{f^{\mathrm{num}}}
\newcommand{\fnumj}{f^{\mathrm{num},j}}
\newcommand{\fnumx}{f^{\mathrm{num},x}}
\newcommand{\fnumy}{f^{\mathrm{num},y}}
\newcommand{\fnumxy}{f^{\mathrm{num},x/y}}
\newcommand{\vecfnum}{\vec{f}^{\mathrm{num}}}
\newcommand{\fvol}{f^{\mathrm{vol}}}
\newcommand{\Fnum}{F^{\mathrm{num}}}
\newcommand{\pnum}{p^{\mathrm{num}}}
\newcommand{\VOL}{\vec{\mathrm{VOL}}}
\newcommand{\SURF}{\vec{\mathrm{SURF}}}
\renewcommand{\epsilon}{\varepsilon}
\renewcommand{\phi}{\varphi}
\renewcommand{\rho}{\varrho}
\newcommand{\N}{\mathbb{N}}
\newcommand{\R}{\mathbb{R}}
\newsavebox{\DelimiterBox}
\newlength{\DelimiterHeight}
\newlength{\DelimiterDepth}
\newsavebox{\ArgumentBox}
\newlength{\ArgumentHeight}
\newlength{\ArgumentDepth}
\newlength{\ResizedDelimiterHeight}
\newlength{\ResizedDelimiterDepth}
\newcommand{\mean}[1]{%
  \savebox{\ArgumentBox}{$\displaystyle #1$}%
  \settoheight{\ArgumentHeight}{\usebox{\ArgumentBox}}%
  \settodepth{\ArgumentDepth}{\usebox{\ArgumentBox}}%
  \savebox{\DelimiterBox}{$\{\!\!\{$}%
  \settoheight{\DelimiterHeight}{\usebox{\DelimiterBox}}%
  \settodepth{\DelimiterDepth}{\usebox{\DelimiterBox}}%
  \setlength{\ResizedDelimiterHeight}{%
    \maxof{1.2\ArgumentHeight}{\DelimiterHeight}%
  }
  \setlength{\ResizedDelimiterDepth}{%
    \maxof{1.2\ArgumentDepth}{\DelimiterDepth}%
  }
  \raisebox{-\ResizedDelimiterDepth}{%
    \resizebox{\width}{\ResizedDelimiterHeight+\ResizedDelimiterDepth}{%
      \raisebox{\DelimiterDepth}{$\{\!\!\{$}%
    }%
  }
  #1
  \raisebox{-\ResizedDelimiterDepth}{%
    \resizebox{\width}{\ResizedDelimiterHeight+\ResizedDelimiterDepth}{%
      \raisebox{\DelimiterDepth}{$\}\!\!\}$}%
    }%
  }
}
\newcommand{\jump}[1]{%
  \savebox{\ArgumentBox}{$\displaystyle #1$}%
  \settoheight{\ArgumentHeight}{\usebox{\ArgumentBox}}%
  \settodepth{\ArgumentDepth}{\usebox{\ArgumentBox}}%
  \savebox{\DelimiterBox}{$[\![$}%
  \settoheight{\DelimiterHeight}{\usebox{\DelimiterBox}}%
  \settodepth{\DelimiterDepth}{\usebox{\DelimiterBox}}%
  \setlength{\ResizedDelimiterHeight}{%
    \maxof{1.2\ArgumentHeight}{\DelimiterHeight}%
  }
  \setlength{\ResizedDelimiterDepth}{%
    \maxof{1.2\ArgumentDepth}{\DelimiterDepth}%
  }
  \raisebox{-\ResizedDelimiterDepth}{%
    \resizebox{\width}{\ResizedDelimiterHeight+\ResizedDelimiterDepth}{%
      \raisebox{\DelimiterDepth}{$[\![$}%
    }%
  }
  #1
  \raisebox{-\ResizedDelimiterDepth}{%
    \resizebox{\width}{\ResizedDelimiterHeight+\ResizedDelimiterDepth}{%
      \raisebox{\DelimiterDepth}{$]\!]$}%
    }%
  }
}
\newcommand{\logmean}[1]{\mean{#1}_\mathrm{log}}
\newcommand{\geomean}[1]{\mean{#1}_\mathrm{geo}}
\begin{document}

\maketitle

\begin{abstract}
  
Entropy conservation and stability of numerical methods in gas dynamics have received
much interest. Entropy conservative numerical fluxes can be used as ingredients
in two kinds of schemes: Firstly, as building blocks in the subcell flux differencing
form of Fisher and Carpenter (2013) and secondly (enhanced by dissipation) as
numerical surface fluxes in finite volume like schemes.

The purpose of this article is threefold. Firstly, the flux differencing theory is
extended, guaranteeing high-order for general symmetric and consistent numerical fluxes
and investigating entropy stability in a generalised framework of summation-by-parts
operators applicable to multiple dimensions and simplex elements.
Secondly, a general procedure to construct affordable entropy conservative fluxes
is described explicitly and used to derive several new fluxes.
Finally, robustness properties of entropy stable numerical fluxes are investigated
and positivity preservation is proven for several entropy conservative fluxes
enhanced with local Lax-Friedrichs type dissipation operators.
All these theoretical investigations are supplemented with numerical experiments.

\end{abstract}

\section{Introduction}
\label{sec:introduction}

During the last decades, there has been an enduring and increasing interest in
entropy conservation and stability of numerical methods for conservation laws.
It is a topic that still needs further research and this article shall contribute 
to it.

As an ingredient, entropy conservative numerical fluxes can be used in two kinds 
of application: They can be used as volume fluxes in the
flux differencing framework of Fisher and Carpenter \cite{fisher2013high} and 
-- enhanced with additional dissipation operators -- as numerical fluxes in a 
finite volume framework.

In this article, the theory of the flux differencing form by Fisher and Carpenter 
\cite{fisher2013high} is extended. Up to now, high order has only been proven for
the special entropy conservative flux of Tadmor \cite{tadmor1987numerical} but
has been observed for a variety of other numerical fluxes. Here, high order
of accuracy is proven in general for consistent and symmetric numerical fluxes
(\autoref{thm:order}). 
Secondly, for the first time, a formulation of generalised summation-by-parts
operators that can be used in multiple dimensions and on simplex elements is used
to investigate entropy conservation and stability (\autoref{thm:entropy}).

Afterwards, the construction of affordable entropy conservative fluxes is briefly
reviewed, a general procedure (Procedure~\ref{pro:affordable-EC-fluxes}) for their 
derivation is distilled and several new entropy conservative numerical fluxes
are constructed.
Nevertheless, entropy conservation or stability alone are not sufficient. In order
to be robust, numerical schemes for gas dynamics also have to satisfy the physical
constraints given by positivity (non-negativity) of the density and internal
energy / pressure. Thus, additional dissipation / limiting will be necessary in
general, possibly reducing the order of the scheme. Furthermore, general results
about convergence are still unknown. 

However, the aim of this article is not the investigation of convergence but the 
analysis of some entropy conservative and stable schemes.
Therefore, the numerical fluxes are enhanced with several dissipation operators.
Positivity preservation is investigated and most entropy conservative fluxes 
enhanced with local Lax-Friedrichs type dissipation operators are proven to 
preserve non-negativity of the density under a non-vanishing CFL condition
(\autoref{thm:positivity-general}).

This article is organised as follows. At first, some well-known properties of
the Euler equations are summed up in section~\ref{sec:Euler} in order to fix the
notation and for further reference. Afterwards, the extension of the flux 
differencing theory of Fisher and Carpenter \cite{fisher2013high} is presented in 
section~\ref{sec:flux-diff}.
Thereafter, several entropy conservative numerical fluxes are constructed
in sections~\ref{sec:fluxes} and \ref{sec:reversed-fluxes}. To get numerical
surface fluxes usable in finite volume methods, the addition of dissipation is
discussed in section~\ref{sec:surface-fluxes}, especially with regard to positivity
preservation. After that, the methods are tested in section~\ref{sec:numerical-tests}.
Finally, the results are summed up in section~\ref{sec:summary}, conclusions are
drawn and some remaining open problems are formulated.

\section{Euler Equations}
\label{sec:Euler}

In this section, some well known properties of the Euler equations in two space
dimensions are given in order to fix the notation and refer to them later.
The Euler equations are
\begin{equation}
\label{eq:Euler}
\begin{aligned}
  \partial_t
  \underbrace{
  \begin{pmatrix}
    \rho
    \\
    \rho v_x
    \\
    \rho v_y
    \\
    \rho e
  \end{pmatrix}
  }_{= u}
  + \,\partial_x
  \underbrace{
  \begin{pmatrix}
    \rho v_x
    \\
    \rho v_x^2 + p
    \\
    \rho v_x v_y
    \\
    (\rho e + p) v_x
  \end{pmatrix}
  }_{= f_x(u)}
  + \,\partial_y
  \underbrace{
  \begin{pmatrix}
    \rho v_y
    \\
    \rho v_x v_y
    \\
    \rho v_y^2 + p
    \\
    (\rho e + p) v_y
  \end{pmatrix}
  }_{= f_y(u)}
  =
  0,
\end{aligned}
\end{equation}
where $\rho$ is the density of the gas, $v = (v_x,v_y)$ its speed, $\rho v$ the
momentum, $e$ the specific total energy, and $p$ the pressure.
The total energy $\rho e$ can be decomposed into the internal energy $\rho \epsilon$
and the kinetic energy $\frac{1}{2} \rho v^2$, i.e.
$\rho e = \rho \epsilon + \frac{1}{2} \rho v^2$.
For a perfect gas,
\begin{equation}
\label{eq:p}
  p
  = \rho R T
  = (\gamma-1) \rho \epsilon
  = (\gamma-1) \left( \rho e - \frac{1}{2} \rho v^2 \right),
\end{equation}
where $R$ is the gas constant, $T$ the (absolute) temperature, and $\gamma$ the
ratio of specific heats. For air, $\gamma = 1.4$ will be used, unless stated
otherwise.

The (mathematical) entropy (scaled by a constant for convenience, as chosen inter
alia by \cite{ismail2009affordable, chandrashekar2013kinetic}) used is
\begin{equation}
\label{eq:U}
  U = - \frac{\rho s}{\gamma - 1},
\end{equation}
where the (physical) specific entropy is given by
$s = \log \frac{p}{\rho^\gamma} = \log p - \gamma \log \rho$.
With the associated entropy flux $F = U v = - \frac{\rho s}{\gamma - 1} v$,
smooth solutions fulfil $\partial_t U + \partial_x F_x + \partial_y F_y = 0$,
and the entropy inequality
\begin{equation}
\label{eq:Euler-entropy}
  \partial_t U + \partial_x F_x + \partial_y F_y \leq 0
\end{equation}
will be used as an additional admissibility criterion for weak solutions.

For $\rho, p > 0$, the entropy $U(u)$ is strictly convex, and the entropy variables
\begin{equation}
\label{eq:w}
  w = U'(u)
  =
  \left(
    \frac{\gamma}{\gamma-1} - \frac{s}{\gamma-1} - \frac{\rho v^2}{2 p},
    \frac{\rho v_x}{p},
    \frac{\rho v_y}{p},
    -\frac{\rho}{p}
  \right)^T
\end{equation}
can be used interchangeably with the conservative variables $u$. The flux potentials
$\psi_x = \rho v_x$, $\psi_y = \rho v_y$ 
fulfil $\psi_{x/y}'(w) = f_{x/y}\left( u(w) \right)$ and
$F_{x/y} = w \cdot f_{x/y} - \psi_{x/y}$.

\section{Summation-by-Parts Operators and Flux Differencing}
\label{sec:flux-diff}

In this section, summation-by-parts operators are briefly presented in order to
fix the notation. Afterwards, the flux differencing framework of Fisher and Carpenter 
\cite{fisher2013highJCP, fisher2013high} is described and new results about
properties of the resulting semidiscretisations are given.

\subsection{Summation-by-Parts Operators}
\label{subsec:SBP}

Summation-by-parts (SBP) operators are composed of discrete derivative operators
and quadrature rules. These differential and integral operators are compatible,
i.e. they satisfy a discrete analogue of the fundamental theorem of calculus or
the divergence theorem in one or several space dimensions, respectively. Since
the multidimensional framework can be described nearly as briefly as in one
space dimension, multiple dimensions are considered here \cite{hicken2016multidimensional,
ranocha2016sbp}.

Numerical solutions have to be represented as a vector $\vec{u}$ in a finite 
dimensional vector space. In the following, nodal bases are considered, i.e.
the components $\vec{u}_i = u(\xi_i)$ are nodal values at pairwise different
points $\xi_i$. Furthermore, nonlinear operations are performed pointwise on
these nodes as in classical finite difference methods. As an example, the square
of the numerical solution represented as $\vec{u}$ is given as $\vec{u^2}$, where
$(\vec{u^2})_i = (\vec{u}_i)^2$, and the flux $\vec{f}$ is given by the components
$\vec{f}_i = f(\vec{u}_i)$.

\begin{definition}
\label{def:SBP}
  An SBP operator on a $d$ dimensional element $\Omega$ with order of accuracy
  $p \in \N$ consists of the following components.
  \begin{itemize}
    \item
    Derivative operators $\mat{D_j}$, $j \in \set{1, \dots, d}$, approximating the
    partial derivative in the $j$-th coordinate direction. These are required to
    be exact for polynomials of degree $\leq p$.
    
    \item
    A mass matrix $\mat{M}$, approximating the $L_2$ scalar product on $\Omega$ via
    \begin{equation}
      \vec{u}^T \mat{M} \vec{v}
      =
      \scp{\vec{u}}{\vec{v}}_M
      \approx
      \scp{u}{v}_{L_2(\Omega)}
      =
      \int_\Omega u v,
    \end{equation}
    where $u,v$ are functions on $\Omega$ and $\vec{u}, \vec{v}$ their approximations
    in the SBP basis (also known as projections on the grid).
    
    \item
    A restriction operator $\mat{R}$ performing interpolation of functions on the
    volume $\Omega$ to the boundary $\partial \Omega$ of $\Omega$.
    
    \item
    A boundary mass matrix $\mat{B}$ approximating the $L_2$ scalar product on
    $\partial \Omega$ via
    \begin{equation}
      \vec{u_B}^T \mat{B} \vec{v_B}
      =
      \scp{\vec{u_B}}{\vec{v_B}}_B
      \approx
      \scp{u_B}{v_B}_{L_2(\partial \Omega)}
      =
      \int_{\partial \Omega} u_B v_B,
    \end{equation}
    where $u_B,v_B$ are functions on $\partial \Omega$ and $\vec{u_B}, \vec{v_B}$
    their approximations in the SBP basis (also known as projections on the grid).
    
    \item
    Multiplication operators $\mat{N_j}$, $j \in \set{1, \dots, d}$, performing
    multiplication of functions on the boundary $\partial \Omega$ with the $j$-th
    component $n_j$ of the outer unit normal. Thus, if $\vec{u}$ is the approximation
    of a function $u|_{\Omega}$ in the SBP basis, $\mat{R} \vec{u}$ is the
    approximation of $u|_{\partial\Omega}$ on the boundary and 
    $\mat{N_j} \mat{R} \vec{u}$ is the approximation of $n_j \, u|_{\partial\Omega}$,
    where $n_j$ is the $j$-th component of the outer unit normal at $\partial\Omega$.
    
    \item
    The restriction and boundary operators approximate
    $\vec{u}^T \mat{R}[^T] \mat{B} \mat{N_j} \mat{R} \vec{v}
    \approx \int_{\partial \Omega} u v \, n_j$,
    where $n_j$ is the $j$-th component of the outer unit normal $n$, and this
    approximation has to be exact for polynomials of degree $\leq p$.
    
    \item
    Finally, the SBP property
    \begin{equation}
    \label{eq:SBP}
      \mat{M} \mat{D_j} + \mat{D_j}[^T] \mat{M} = \mat{R}[^T] \mat{B} \mat{N_j} \mat{R}
    \end{equation}
    has to be fulfilled, mimicking the divergence theorem on a discrete level
    \begin{equation}\small
      \int_\Omega u \, (\partial_j v)
      + \int_\Omega (\partial_j u) \, v
      \approx
      \vec{u}^T \mat{M} \mat{D_j} \vec{v}
      + \vec{u}^T \mat{D_j}[^T] \mat{M} \vec{v}
      =
      \vec{u}^T \mat{R}[^T] \mat{B} \mat{N_j} \mat{R} \vec{v}
      \approx
      \int_{\partial \Omega} u v \, n_j.
    \end{equation}
  \end{itemize}
\end{definition}
In one space dimension, the index of the derivative and multiplication operators
$\mat{D_1}$, $\mat{N_1}$ will be dropped. Furthermore, the boundary matrix
is the $2 \times 2$ identity matrix $\mat{B} = \diag{1,1}$ and multiplication
with the outer normal is given by $\mat{N} = \diag{-1,1}$.

\begin{remark}
  In Definition~\ref{def:SBP}, the order of accuracy is enforced in the usual 
  sense (of Taylor expansions) by requirements of exactness for polynomials up 
  to some specific degree $p$.
\end{remark}

\begin{remark}
  Multi-dimensional SBP operators can be constructed via tensor products of
  SBP operators in one dimension. However, genuinely multidimensional SBP
  operators on simplices that are not formed as tensor products of lower
  dimensional operators can be constructed as well \cite{hicken2016multidimensional}.
  However, coordinate directions $j$ are still used there, as in many numerical
  schemes known to the author.
\end{remark}

\begin{remark}
\label{rem:only-1dim}
  Since coordinate directions $j$ are used, multidimensional semidiscretisations
  can be obtained via summing up the terms for each space dimension. Therefore,
  only one space dimension is considered in the following.
\end{remark}

\subsection{Flux Differencing Form}
\label{subsec:flux-diff}

In the flux differencing form of Fisher and Carpenter \cite{fisher2013highJCP,
fisher2013high}, (two-point) numerical fluxes and SBP operators are used to 
create high-order semidiscretisations of hyperbolic conservation laws.

\begin{definition}
\label{def:fnum}
  A numerical flux $\fnum$ is a Lipschitz continuous mapping 
  $(u_-,u_+) \mapsto \fnum(u_-,u_+)$ that is consistent with the flux $f$
  of the conservation law \eqref{eq:Euler}, i.e. $\fnum(u,u) = f(u)$.
\end{definition}

\begin{definition}
\label{def:fnum-EC}
  A numerical flux $\fnum$ is entropy conservative (in the sense of Tadmor
  \cite{tadmor1987numerical, tadmor2003entropy}), if
  $(w_i - w_k) \cdot \fnum(u_i, u_k) - (\psi_i - \psi_k) = 0$. Here, $u_{i/k}$
  are conserved variables, $w_{i/k} = w(u_{i/k})$ the corresponding entropy variables
  and $\psi_{i/k} = \psi(u{i/k})$ the flux potentials as in \autoref{sec:Euler}.
  
  A numerical flux $\fnum$ is entropy stable, (in the sense of Tadmor
  \cite{tadmor1987numerical, tadmor2003entropy}), if
  $(w_i - w_k) \cdot \fnum(u_i, u_k) - (\psi_i - \psi_k) \leq 0$.
\end{definition}

\begin{definition}
  A numerical flux is symmetric, if $\fnum(u_i, u_k) = \fnum(u_k, u_i)$.
\end{definition}

A general semidiscretisation of a conservation law $\partial_t u + \operatorname{div} f = 0$
using SBP operators can be written on one element as
\begin{equation}
\label{eq:semidiscretisation}
  \partial_t \vec{u} = -\VOL - \SURF,
\end{equation}
where $\VOL$ are volume terms in the interior of the element and $\SURF$ are 
surface terms coupling the elements.

Numerical fluxes are used in two different ways in semidiscretisations applying
the flux differencing form of Fisher and Carpenter \cite{fisher2013highJCP,
fisher2013high}. Firstly, numerical fluxes $\fnum$ are applied at element
boundaries in order to couple neighbouring elements as in finite volume and 
discontinuous Galerkin methods. Secondly, numerical fluxes are used in the interior
of each element in order to form a discretisation of the divergence of the flux $f$.
In order to distinguish these different applications of fluxes, the second kind
of fluxes will be denoted $\fvol$, since they form the volume terms of the
semidiscretisation.

In the end, a semidiscretisation using the flux differencing form can be written
as \eqref{eq:semidiscretisation}, where the volume and surface terms are given by
\begin{align}
  \label{eq:volume-terms}
  \VOL_i
  &=
  \sum_k 2 \mat{D}[_{i,k}] \fvol( \vec{u}_i, \vec{u}_k ),
  \\
  \label{eq:surface-terms}
  \SURF
  &=
  \mat{M}[^{-1}] \mat{R}[^T] \mat{B} \mat{N} \left(
    \vecfnum - \mat{R} \vec{f}
  \right).
\end{align}
Here, $\vecfnum$ contains the numerical fluxes at the interfaces between elements.
In \eqref{eq:volume-terms}, the sum $\sum_k$ contains contributions from all
points of the nodal basis used to represent the numerical solutions. Heuristically,
the factor $2$ appears in the volume terms \eqref{eq:volume-terms}, since the
volume flux $\fvol$ can be interpreted as a mean value, containing an additional
factor $\frac{1}{2}$. It is justified essentially by Lemma~\ref{lem:expansion-of-fnum}
below.

\begin{remark}
\label{rem:dropping-dimensions}
  As mentioned in Remark~\ref{rem:only-1dim}, semidiscretisations in multiple
  space dimensions contain additional sums over each coordinate direction;
  the fluxes, derivative and multiplication operators have to be indexed by
  space dimension.
\end{remark}

\begin{remark}
\label{rem:FV}
  A first order finite volume method can be obtained in this setting as
  follows. The nodal basis uses only one node inside each element (e.g. the
  midpoint). Thus, the solution is constant in each element and the derivative
  matrix $\mat{D}$ is zero. Moreover, the $1 \times 1$ mass matrix is the length
  $\Delta x$ of the element. Thus, the volume terms \eqref{eq:volume-terms} vanish 
  and the surface terms \eqref{eq:surface-terms} become
  $\SURF = \frac{1}{\Delta x} \left( \fnum_R - \fnum_L \right)$,
  where $\fnum_{L/R}$ is the numerical flux at the left / right boundary of the
  element.
\end{remark}

\subsection{Order of Accuracy}
\label{subsec:order}

Fisher and Carpenter \cite[Theorem 3.1]{fisher2013high} considered diagonal-norm
SBP operators including the boundary nodes and showed that the volume terms
\eqref{eq:volume-terms} of the semidiscretisation \eqref{eq:semidiscretisation}
are approximations to $\partial_x f$ of the same order of accuracy as the SBP 
derivative operators $\mat{D}$, if the two-point flux $\fvol$ used is the entropy 
conservative one proposed by Tadmor~\cite{tadmor1987numerical},
\begin{equation}
\label{eq:fvol-tadmor}
  \fvol(u_i, u_k)
  =
  \int_0^1 f\Big( u\big( w(u_i) + t (w(u_k) - w(u_i)) \big) \Big) \dif t.
\end{equation}
Here, the following generalisation of Theorem 3.1 of \cite{fisher2013high}
will be proven.
\begin{theorem}
\label{thm:order}
  If the numerical flux $\fvol$ is smooth, consistent with the flux~$f$, and 
  symmetric, the volume terms \eqref{eq:volume-terms} are an approximation to 
  $\partial_x f$ of the same order of accuracy as the SBP derivative matrix 
  $\mat{D}$.
\end{theorem}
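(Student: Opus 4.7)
The plan is to reduce the computation of $\VOL_i$ to a single application of $\mat{D}$ to a smooth grid function, whose order of accuracy will transfer directly. Fix a node $x_i$ and assume that the nodal values come from a smooth exact solution $u$, so that $\vec{u}_k = u(x_k)$. I will introduce the single-argument function $\phi(x) := \fvol\bigl(u(x_i),\, u(x)\bigr)$, which is smooth because $\fvol$ and $u$ are. Then by the definition \eqref{eq:volume-terms},
\begin{equation*}
  \VOL_i
  = 2 \sum_k \mat{D}[_{i,k}]\, \fvol\bigl( u(x_i), u(x_k) \bigr)
  = 2 \sum_k \mat{D}[_{i,k}]\, \phi(x_k)
  = 2\,\bigl(\mat{D}\, \vec{\phi}\bigr)_i.
\end{equation*}
The exactness of $\mat{D}$ on polynomials of degree $\leq p$ (cf.\ \autoref{def:SBP}) together with a local Taylor expansion gives $\bigl(\mat{D}\,\vec{\phi}\bigr)_i = \phi'(x_i) + O(h^p)$, so $\VOL_i$ inherits the order of accuracy of $\mat{D}$ as claimed, provided $\phi'(x_i) = \tfrac{1}{2}\partial_x f(u(x_i))$.

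The identification of $\phi'(x_i)$ is the only place where symmetry is used. By the chain rule,
\begin{equation*}
  \phi'(x_i) = \partial_2 \fvol\bigl( u(x_i), u(x_i) \bigr) \cdot u'(x_i).
\end{equation*}
Differentiating the consistency identity $\fvol(u,u) = f(u)$ gives $\partial_1 \fvol(u,u) + \partial_2 \fvol(u,u) = f'(u)$, while differentiating the symmetry identity $\fvol(u,v) = \fvol(v,u)$ in $v$ and setting $v=u$ yields $\partial_1 \fvol(u,u) = \partial_2 \fvol(u,u)$. Combining, $\partial_2 \fvol(u,u) = \tfrac{1}{2} f'(u)$, so that $2\phi'(x_i) = f'(u(x_i))\, u'(x_i) = \partial_x f(u(x_i))$. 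Chaining the two steps, $\VOL_i = \partial_x f(u(x_i)) + O(h^p)$, which matches the order of $\mat{D}$. The multidimensional and simplex cases require no further work after the dimension-by-dimension reduction noted in Remarks~\ref{rem:only-1dim} and~\ref{rem:dropping-dimensions}.

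I do not foresee a genuine obstacle. Symmetry is precisely the hypothesis that forces $\partial_2 \fvol(u,u)$ to take the ``natural'' value $\tfrac{1}{2} f'(u)$, which is exactly what vindicates the factor of $2$ in \eqref{eq:volume-terms} and produces consistency with $\partial_x f$. The only mildly delicate point is interpreting ``order of accuracy of $\mat{D}$'' for a generic smooth $\phi$ rather than a polynomial; this is the standard reading of \autoref{def:SBP}, following from a local Taylor expansion and the usual $|\mat{D}[_{i,k}]| = O(h^{-1})$ scaling of SBP operators. Notably no entropy structure is invoked, which is why the statement strictly generalises the Tadmor-specific argument of \cite{fisher2013high}.
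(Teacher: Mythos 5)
Your proof is correct, and it takes a genuinely different route from the paper's. The paper first proves Lemma~\ref{lem:expansion-of-fnum} (a power series expansion of $\fvol$ whose linear term carries the factor $\tfrac{1}{2} f'$), substitutes that expansion into \eqref{eq:volume-terms}, and then disposes of the infinite family of higher-order terms $c_\alpha (w_k - w_i)^\alpha$ by a multi-index binomial identity showing that each one is a consistent approximation of zero. You instead freeze the first argument and compose with the smooth spatial dependence, collapsing $\VOL_i$ to $2\bigl(\mat{D}\,\vec{\phi}\bigr)_i$ for the single smooth grid function $\phi(x) = \fvol\bigl(u(x_i), u(x)\bigr)$, so that the accuracy of $\mat{D}$ is invoked exactly once; the only structural input is $\partial_2 \fvol(u,u) = \tfrac{1}{2} f'(u)$, which is the same symmetry-plus-consistency fact as in the Lemma (you obtain it by differentiating the two defining identities, the paper by a directional-derivative computation). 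Your route is shorter, requires only finitely many derivatives of $\fvol$ rather than a convergent power series, and avoids the multi-index bookkeeping entirely. Both arguments ultimately rest on the same reading of ``order of accuracy of $\mat{D}$'' for general smooth grid functions --- exactness on polynomials of degree $\leq p$ combined with the $O(h^{-1})$ row scaling --- which the paper also uses implicitly when it asserts that $\sum_k \mat{D}[_{i,k}] w_k$ and $\sum_k \mat{D}[_{i,k}] w_k^\beta$ are accurate approximations of $\partial_x w$ and $\partial_x w^\beta$; you are right to flag this point explicitly, and it costs you nothing beyond what the paper already assumes.
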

\begin{remark}
\label{rem:order-multidim}
  An analogous result holds in multiple space dimensions, if the contributions
  of the coordinate directions are summed up as mentioned in Remark~\ref{rem:only-1dim}.
\end{remark}
In order to prove \autoref{thm:order}, the following Lemma will be used.
\begin{lemma}
\label{lem:expansion-of-fnum}
  If the numerical flux $\fvol$ is smooth, consistent with the flux $f$, and symmetric,
  a power series expansion of the $m$-th component $\fvol_m$ can be written as
  \begin{equation}
  \label{eq:flux-diff-condition-order-fvol}
    \fvol_m(w_i, w_k)
    =
    f_m(w_i)
    + \frac{1}{2} f_m'(w_i) \cdot (w_k - w_i)
    + \sum_{\abs\alpha \geq 2} c_\alpha \left(w_k - w_i \right)^\alpha,
  \end{equation}
  where multi-index notation is used, $c_\alpha$ are scalar coefficients, and 
  $w$ denotes any variable, e.g. conservative variables, primitive variables, 
  or entropy variables.
\end{lemma}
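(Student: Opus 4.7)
The plan is to Taylor expand $\fvol_m(w_i, w_k)$ as a smooth function of the second argument about the diagonal point $w_k = w_i$, treating $w_i$ as fixed. Writing $h = w_k - w_i$, smoothness gives
\begin{equation*}
  \fvol_m(w_i, w_i + h)
  = \fvol_m(w_i, w_i) + D_2 \fvol_m(w_i, w_i) \cdot h + \sum_{|\alpha| \geq 2} c_\alpha \, h^\alpha,
\end{equation*}
where $D_2$ denotes the derivative with respect to the second slot and $c_\alpha$ are the usual Taylor coefficients (depending on $w_i$). The whole point of the lemma is then to pin down the zeroth and first-order terms.

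First, I would use consistency $\fvol_m(w, w) = f_m(w)$ to identify the zeroth-order term $\fvol_m(w_i, w_i) = f_m(w_i)$. Second, to identify the linear coefficient, I would differentiate the consistency relation along the diagonal: treating $\fvol_m$ as a function of two arguments and setting both to $w$, the chain rule yields
\begin{equation*}
  D_1 \fvol_m(w, w) + D_2 \fvol_m(w, w) = f_m'(w).
\end{equation*}
Now the symmetry hypothesis $\fvol_m(w_i, w_k) = \fvol_m(w_k, w_i)$ directly gives $D_1 \fvol_m(w, w) = D_2 \fvol_m(w, w)$, so each equals $\tfrac{1}{2} f_m'(w)$. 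Substituting this into the Taylor expansion gives exactly the claimed form, with all remaining terms absorbed into the sum over $|\alpha| \geq 2$.

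The only substantive step is the symmetry argument for the linear coefficient; everything else is Taylor's theorem together with a one-line chain-rule computation. Once the zeroth and first-order terms are fixed, the statement that the remaining terms have the stated shape with scalar coefficients is automatic from the multivariate Taylor formula applied to the smooth scalar function $h \mapsto \fvol_m(w_i, w_i + h)$. I do not anticipate a real obstacle; the point of isolating this lemma is simply to record that symmetry plus consistency forces the correct $\tfrac{1}{2} f_m'(w_i)$ coefficient, which is what will be needed later to match the structure of the SBP derivative operator and deduce \autoref{thm:order}.
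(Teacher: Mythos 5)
Your proposal is correct and follows essentially the same route as the paper: consistency fixes the zeroth-order term, and the linear coefficient is pinned down by combining the symmetry of $\fvol$ (which equates the two partial derivatives on the diagonal) with differentiation of the consistency relation along the diagonal. The paper phrases the second step as an explicit directional derivative in the direction $\frac{1}{\sqrt{2}}(1,1)^T$ computed via limits, which is exactly your chain-rule identity $D_1 \fvol_m(w,w) + D_2 \fvol_m(w,w) = f_m'(w)$ in different notation.
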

Here, multi-index notation \cite[Appendix A.3]{evans2010partial} is used, i.e. 
the multi-index $\alpha = (\alpha_1, \dots, \alpha_n) \in \N_0^n$ has length 
$\abs{\alpha} = \alpha_1 + \dots + \alpha_n$ and for $x \in \R^n$, 
$x^\alpha := x_1^{\alpha_1} \cdot \dots \cdot x_n^{\alpha_n}$. The last term
in \eqref{eq:flux-diff-condition-order-fvol} is a sum over multi-indices $\alpha$
of length $\abs{\alpha} \geq 2$.

\begin{proof}[Proof of Lemma~\ref{lem:expansion-of-fnum}]
  A general power series expansion of the mapping $w_k \mapsto \fvol_m(w_i, w_k)$ 
  around $w_i$ is
  \begin{equation}
    \fvol_m(w_i, w_i)
    + \frac{\partial \fvol_m(w_i,w_k)}{\partial w_k} \Bigg|_{w_k=w_i} \cdot (w_k - w_i)
    + \sum_{\abs\alpha \geq 2} c_\alpha \left(w_k - w_i \right)^\alpha.
  \end{equation}
  Since the numerical flux $\fvol$ is consistent, i.e. $\fvol_m(w_i, w_i) = f_m(w_i)$,
  it suffices to prove $\partial_{w_k}\fvol_m(w_i,w_k) \big|_{w_k=w_i} = 
  \frac{1}{2} f_m'(w_i)$.
  
  Denoting the partial derivative with respect to the $l$-th component of $w_k$ 
  as $\partial_{w_{k,l}} \fvol_m(w_i, w_k)$,  
  \begin{equation}
  \begin{aligned}
    &
    \frac{\partial \fvol_m(w_i,w_k)}{\partial w_{k,l}} \Bigg|_{w_k = w_i = w}
    =
    \lim_{\delta \to 0} \frac{ \fvol_m(w, w + \delta e_l) - \fvol_m(w, w)}{\delta}
    \\
    =&
    \lim_{\delta \to 0} \frac{ \fvol_m(w + \delta e_l, w) - \fvol_m(w, w)}{\delta}
    =
    \frac{\partial \fvol_m(w_i,w_k)}{\partial w_{i,l}} \Bigg|_{{w_k = w_i = w}}.
  \end{aligned}
  \end{equation}
  Due to this symmetry, the $l$-th component of directional derivative of 
  the flux $\fvol_m(w_i,w_k)$ at $w_i=w_k=w$ in direction $\frac{1}{\sqrt{2}} (1,1)^T$ is
  given by
  \begin{equation}
  \begin{aligned}
    &
    \frac{2}{\sqrt{2}} 
    \frac{\partial \fvol_m(w_i,w_k)}{\partial w_{k,l}} \Bigg|_{w_k = w_i = w}
    =
    \lim_{\delta \to 0} 
      \frac{\fvol_m(w + \frac{\delta}{\sqrt{2}} e_l, w + \frac{\delta}{\sqrt{2}}  e_l)
            - \fvol_m(w,w)}{\delta}
    \\
    &=
    \lim_{\delta \to 0} \frac{f_m(w + \frac{\delta}{\sqrt{2}} e_l) - f_m(w)}{\delta}
    =
    \frac{1}{\sqrt{2}} \lim_{\delta \to 0} \frac{f_m(w + \delta e_l) - f_m(w)}{\delta}
    =
    \frac{1}{\sqrt{2}} \frac{\partial f_m(w)}{\partial w_l},
  \end{aligned}
  \end{equation}
  where the consistency $\fvol(w,w) = f(w)$ has been used. This proves the
  desired equality
  $\partial_{w_k}\fvol_m(w_i,w_k) \big|_{w_k=w_i} = \frac{1}{2} f_m'(w_i)$.
  \qed
\end{proof}

\begin{proof}[Proof of \autoref{thm:order}]
  It suffices to consider a single component $m$ of the flux. In order to simplify
  the notation, this index is dropped in the following.  
  Using Lemma~\ref{lem:expansion-of-fnum}, the volume terms \eqref{eq:volume-terms}
  at $x_i$ can be rewritten as
  \begin{equation}
  \label{eq:flux-diff-expanded}
  \begin{aligned}
    \sum_{k} 2 \mat{D}[_{i,k}] \fvol(w_i, w_k)
    =&
    \sum_{k} 2 \mat{D}[_{i,k}] f(w_i)
    + \sum_{k} \mat{D}[_{i,k}] f'(w_i) \cdot (w_k - w_i)
    \\&
    + \sum_{k} \mat{D}[_{i,k}] \sum_{\abs\alpha \geq 2} c_\alpha \left(w_k - w_i \right)^\alpha.
  \end{aligned}
  \end{equation}
  Since the derivative is exact for constants, i.e. $\mat{D} \vec{1} = 0$, the
  first sum on the right hand side of \eqref{eq:flux-diff-expanded} vanishes.
  By the same reason, the second sum can be rewritten as
  \begin{equation}
    \sum_k \mat{D}[_{i,k}] f'(w_i) \cdot (w_k - w_i)
    =
    f'(w_i) \cdot \sum_k \mat{D}[_{i,k}] w_k
  \end{equation}
  and is therefore of the desired order of accuracy.
  Finally, the third summand in \eqref{eq:flux-diff-expanded} is a higher order 
  correction to the product rule. Due to the binomial theorem (in multi-index
  notation),
  \begin{equation}
    \sum_{\abs{\alpha} \geq 2} c_\alpha (w_k - w_i)^\alpha
    =
    \sum_{\abs{\alpha} \geq 2} c_\alpha 
    \sum_{\beta \leq \alpha} \binom{\alpha}{\beta} w_k^\beta (-w_i)^{\alpha-\beta}
  \end{equation}
  for multi-indices $\alpha, \beta \in \N_0^n$. Thus, the third term in
  \eqref{eq:flux-diff-expanded} is
  \begin{equation}
    \sum_{\abs{\alpha} \geq 2} c_\alpha 
    \sum_{\beta \leq \alpha} 
    \binom{\alpha}{\beta}  \sum_k (-w_i)^{\alpha-\beta} \mat{D}[_{i,k}] w_k^\beta,
  \end{equation}
  where $\beta \leq \alpha$ means $\forall j\colon \beta_j \leq \alpha_j$.
  By the product rule, a smooth function $w$ of $x$ satisfies
  \begin{equation}
  \begin{aligned}
    \partial_x w^\beta
    =
    \partial_x \left( w_1^{\beta_1} \dots w_n^{\beta_n} \right)
    =&
    \sum_{j=1}^n \beta_j
    w_1^{\beta_1} \dots w_{j-1}^{\beta_{j-1}} w_j^{\beta_j-1} w_{j+1}^{\beta_{j+1}}
    \dots w_n^{\beta_n} \partial_x w_j
    \\
    =&
    \sum_{j=1}^n \beta_j w^{\beta - e_j} \partial_x w_j,
  \end{aligned}
  \end{equation}
  where $e_j$ is the $j$-th unit vector, $(e_j)_l = \delta_{jl}$. Thus, the third
  sum in \eqref{eq:flux-diff-expanded} is an approximation of the same order of
  accuracy as the derivative matrix $\mat{D}$ to
  \begin{equation}
  \begin{aligned}
    &
    \sum_{\abs{\alpha} \geq 2} c_\alpha 
    \sum_{\beta \leq \alpha} \binom{\alpha}{\beta} (-w_i)^{\alpha-\beta}
    \sum_{j=1}^n \beta_j w_i^{\beta-e_j}
    \sum_k \mat{D}[_{i,k}] w_{k,j}
    \\
    =&
    \sum_{\abs{\alpha} \geq 2} c_\alpha 
    \sum_{j=1}^n
    \underbrace{\sum_{\beta \leq \alpha} \binom{\alpha}{\beta} \beta_j (-\mathbbm{1})^{\alpha-\beta}}
    w_i^{\alpha-e_j}
    \sum_k \mat{D}[_{i,k}] w_{k,j},
  \end{aligned}
  \end{equation}
  where $\mathbbm{1}$ is the vector with components $1$ of the same size as $w_i$ and
  $w_{k,j}$ is the $j$-th component of the vector $w_k$ approximating $w$ at
  $x = x_k$. The sum depending on $\beta$ vanishes, since
  \begin{equation}
  \begin{aligned}
    &
    \partial_{w_j} \left( -\mathbbm{1} + w \right)^\alpha
    =
    \partial_{w_j} \sum_{\beta \leq \alpha} \binom{\alpha}{\beta}
    (-\mathbbm{1})^{\alpha-\beta} w^\beta
    =
    \sum_{\beta \leq \alpha} \binom{\alpha}{\beta}
    (-\mathbbm{1})^{\alpha-\beta} \beta_j w^{\beta-e_j}
    \\
    \stackrel{w=\mathbbm{1}}{\implies}&
    0 = \alpha_j \left( -\mathbbm{1} + \mathbbm{1} \right)^{\alpha-e_j}
    = \sum_{\beta \leq \alpha} \binom{\alpha}{\beta}
    (-\mathbbm{1})^{\alpha-\beta} \beta_j.
  \end{aligned}
  \end{equation}
  Thus, the volume terms \eqref{eq:volume-terms} are an approximation of the same 
  order of accuracy as the derivative matrix $\mat{D}$ to $\partial_x f(w)$ 
  at $x_i$.
  \qed
\end{proof}

\subsection{Entropy Conservation}
\label{subsec:entropy}

Fisher and Carpenter \cite[Theorem 3.2]{fisher2013high} considered diagonal-norm
SBP operators including the boundary nodes in one space dimension and showed that
the semidiscretisation \eqref{eq:semidiscretisation} using the volume terms 
\eqref{eq:volume-terms} and the surface terms \eqref{eq:surface-terms} is semidiscretely
entropy conservative if the volume flux $\fvol$ is consistent, symmetric and
entropy conservative. They proved additional subcell entropy conservation
properties that are not considered here, since its extension to multidimensional
SBP operators on simplices does not seem clear. Instead, only entropy conservation
across elements will be considered.
Here, the following generalisation / variation of Theorem 3.2 of \cite{fisher2013high}
will be proven.
\begin{theorem}
\label{thm:entropy}
  If the numerical (volume) flux $\fvol$ is consistent with $f$, symmetric, and 
  entropy conservative, the nodal mass matrix $\mat{M}$ is diagonal, and the 
  boundary operator $\mat{R}[^T] \mat{B} \mat{N} \mat{R}$ is diagonal, too, the 
  semidiscrete scheme \eqref{eq:semidiscretisation} is entropy conservative / stable 
  across elements, if the numerical (surface) flux $\fnum$ is entropy conservative
  / stable.
\end{theorem}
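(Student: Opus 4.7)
The plan is to test the semidiscretisation \eqref{eq:semidiscretisation} against $\vec{w}^T\mat{M}$ from the left, show that the result telescopes into purely boundary contributions involving only $\fnum$, and then sum over elements so that the interface contributions collapse to the Tadmor EC/ES condition for the surface flux. The diagonality of $\mat{M}$ will enter immediately: because $\mat{M}$ acts pointwise, the pointwise chain rule $w_i\cdot\partial_t u_i=\partial_t U_i$ propagates through it to give $\vec{w}^T\mat{M}\,\partial_t\vec{u}=\vec{1}^T\mat{M}\,\partial_t\vec{U}$, which is exactly the discrete rate of change of the total entropy in the element. The second hypothesis -- diagonality of $\mat{R}[^T]\mat{B}\mat{N}\mat{R}$ -- will be needed only at the very end of the volume-term analysis.

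Introducing $\mat{Q}:=\mat{M}\mat{D}$, the SBP property \eqref{eq:SBP} reads $\mat{Q}+\mat{Q}[^T]=\mat{E}$ with $\mat{E}:=\mat{R}[^T]\mat{B}\mat{N}\mat{R}$. I will split $2Q_{i,k}=(Q_{i,k}-Q_{k,i})+(Q_{i,k}+Q_{k,i})$ inside $\vec{w}^T\mat{M}\VOL=\sum_{i,k}2Q_{i,k}\,w_i\cdot\fvol(u_i,u_k)$. For the skew half, swapping the summation indices together with the symmetry of $\fvol$ reduces it to $\sum_{i,k}Q_{i,k}(w_i-w_k)\cdot\fvol(u_i,u_k)$, which the EC property of $\fvol$ rewrites as $\sum_{i,k}Q_{i,k}(\psi_i-\psi_k)$. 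Using $\mat{D}\vec{1}=0$ on one piece and the SBP identity $\mat{Q}[^T]\vec{1}=\mat{E}\vec{1}=\mat{R}[^T]\mat{B}\mat{N}\vec{1}_B$ on the other collapses this to the pure boundary contribution $-(\mat{R}\vec{\psi})^T\mat{B}\mat{N}\vec{1}_B$. For the symmetric half, the sum becomes $\sum_{i,k}E_{i,k}\,w_i\cdot\fvol(u_i,u_k)$, and it is here that the diagonality assumption on $\mat{E}$, together with consistency $\fvol(u_i,u_i)=f(u_i)$, collapses it to $(\mat{R}\vec{w})^T\mat{B}\mat{N}\mat{R}\vec{f}$.

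Adding $\vec{w}^T\mat{M}\SURF=(\mat{R}\vec{w})^T\mat{B}\mat{N}(\vecfnum-\mat{R}\vec{f})$ cancels the $\mat{R}\vec{f}$ piece, leaving
\[
  \partial_t(\vec{1}^T\mat{M}\vec{U})
  = -(\mat{R}\vec{w})^T\mat{B}\mat{N}\vecfnum
    + (\mat{R}\vec{\psi})^T\mat{B}\mat{N}\vec{1}_B,
\]
i.e.\ only boundary data of the element. Summing over all elements, every interior interface node will appear once from each side with opposite outer normals but a shared numerical flux $\fnum$, so the total interior contribution has exactly the form $-B_{jj}\bigl[(w_i-w_k)\cdot\fnum(u_i,u_k)-(\psi_i-\psi_k)\bigr]$, which vanishes by Definition~\ref{def:fnum-EC} when $\fnum$ is entropy conservative and has the correct dissipative sign when $\fnum$ is entropy stable; only the physical boundary of the computational domain remains.

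The hard part will be the symmetric half of the volume-term splitting: without the diagonality of $\mat{R}[^T]\mat{B}\mat{N}\mat{R}$, off-diagonal couplings between distinct interior nodes survive and the clean identification of that piece with a boundary flux fails, spoiling the telescoping. The skew-symmetric half, by contrast, is a routine reindexing argument that goes through for any SBP operator and uses only the symmetry, consistency, and Tadmor EC property of $\fvol$ together with $\mat{D}\vec{1}=0$.
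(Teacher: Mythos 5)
Your proposal is correct and follows essentially the same route as the paper's proof: your splitting $2\mat{Q} = (\mat{Q}-\mat{Q}[^T]) + (\mat{Q}+\mat{Q}[^T])$ with $\mat{Q}=\mat{M}\mat{D}$ is exactly the paper's use of the SBP property to replace one copy of $\mat{M}\mat{D}$ by $\mat{R}[^T]\mat{B}\mat{N}\mat{R}-\mat{D}[^T]\mat{M}$, and the subsequent steps (reindexing via symmetry of $\fvol$, applying the Tadmor condition, $\mat{D}\vec{1}=0$, diagonality of the boundary operator plus consistency to collapse the symmetric half, and the per-interface Tadmor condition for $\fnum$) coincide with the paper's. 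The only cosmetic difference is that you keep the $w\cdot f$ and $\psi$ boundary contributions separate to the end rather than combining them into the entropy flux $F=w\cdot f-\psi$ within each element, which is equivalent.
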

\begin{proof}[Proof of \autoref{thm:entropy}]
  In the semidiscrete scheme \eqref{eq:semidiscretisation}, the rate of change 
  of the total entropy $\int_\Omega U$ is given as $\od{}{t} \int_\Omega U
  = \vec{w}^T \mat{M} \partial_t \vec{u}$. Multiplying the volume term
  \eqref{eq:volume-terms} with $\vec{w}^T \mat{M}$ results in
  \begin{equation}
    \sum_{i,k} 2 w_i \cdot \left[ \mat{M} \mat{D} \right]_{i,k} \fvol_{i,k}
    =
    \sum_{i,k} w_i \cdot \left[
      \mat{M} \mat{D} + \mat{R}[^T] \mat{B} \mat{N} \mat{R} - \mat{D}[^T] \mat{M}
    \right]_{i,k} \fvol_{i,k},
  \end{equation}
  where $\fvol_{i,k} = \fvol(u_i,u_k)$.
  Since the mass matrix $\mat{M}$ is diagonal,
  \begin{equation}
  \begin{aligned}
    \sum_{i,k} w_i \cdot \left[ \mat{M} \mat{D} - \mat{D}[^T] \mat{M} \right]_{i,k} \fvol_{i,k}
    =&
    \sum_{i,k} \left( M_{ii} D_{ik} - M_{kk} D_{ki} \right) w_i \cdot \fvol_{i,k}
    \\
    =&
    \sum_{i,k} M_{ii} D_{ik} (w_i - w_k) \cdot \fvol_{i,k},
  \end{aligned}
  \end{equation}
  where the indices $i,j$ have been exchanged in the second part of the sum, using
  the symmetry of $\fvol$.
  Then, by entropy conservation $(w_i - w_k) \cdot \fvol_{i,k} = \psi_i - \psi_k$,
  \begin{small}
  \begin{equation}
  \begin{aligned}
    &
    \sum_{i,k} M_{ii} D_{ik} (w_i - w_k) \cdot \fvol_{i,k}
    =
    \sum_{i,k} M_{ii} D_{ik} ( \psi_i - \psi_k )
    =
    - \sum_{i,k} M_{ii} D_{ik} \psi_k
    \\
    =&
    - \sum_{i,k} \left[ \mat{M} \mat{D} \right]_{ik} \psi_k
    =
    - \sum_{i,k} \left[ \mat{R}[^T] \mat{B} \mat{N} \mat{R} - \mat{D}[^T] \mat{M} \right]_{ik}
    \psi_k
    =
    - \sum_{i,k} \left[ \mat{R}[^T] \mat{B} \mat{N} \mat{R} \right]_{ik} \psi_k,
  \end{aligned}
  \end{equation}
  \end{small}%
  since the derivative $\mat{D}$ is exact for constants, i.e. $\mat{D} \vec{1} = 0$.

  The boundary term with the diagonal matrix $\mat{R}[^T] \mat{B} \mat{N} \mat{R}$
  can be written as
  \begin{equation}
  \begin{aligned}
    \sum_{i,k} w_i \cdot \left[ \mat{R}[^T] \mat{B} \mat{N} \mat{R} \right]_{i,k} \fvol_{i,k}
    =&
    \sum_{k} \left[ \mat{R}[^T] \mat{B} \mat{N} \mat{R} \right]_{k,k} w_k \cdot 
    \underbrace{ \fvol_{k,k} }_{=f_k},
  \end{aligned}
  \end{equation}
  since the volume flux $\fvol$ is consistent with the flux $f$. Therefore, the
  total expression becomes
  \begin{equation}
  \begin{aligned}
    \sum_{i,k} 2 w_i \cdot \left[ \mat{M} \mat{D} \right]_{i,k} \fvol_{i,k}
    =&
    \sum_{k} \left[ \mat{R}[^T] \mat{B} \mat{N} \mat{R} \right]_{k,k}
      \underbrace{(w_k \cdot f_k - \psi_k)}_{= F_k}
    \\
    =&
    \sum_{i,k} \left[ \mat{R}[^T] \mat{B} \mat{N} \mat{R} \right]_{i,k} F_k
    =
    \vec{1}^T \mat{R}[^T] \mat{B} \mat{N} \mat{R} \vec{F},
  \end{aligned}
  \end{equation}
  since the entropy flux $F$ is given by $F = w \cdot f - \psi$.
  
  The surface term \eqref{eq:surface-terms} multiplied with $\vec{w}^T \mat{M}$ is
  $\vec{w}^T \mat{R}[^T] \mat{B} \mat{N} \left( \vecfnum - \mat{R} \vec{f} \right)$.
  Thus, the semidiscrete rate of change of the entropy $U$ in one element is
  \begin{equation}
    \vec{w}^T \mat{M} \partial_t \vec{u}
    =
    - \vec{1}^T \mat{R}[^T] \mat{B} \mat{N} \mat{R} \vec{F}
    - \vec{w}^T \mat{R}[^T] \mat{B} \mat{N} \left( \vecfnum - \mat{R} \vec{f} \right).
  \end{equation}
  Since $\mat{R}[^T] \mat{B} \mat{N} \mat{R}$ is diagonal, 
  $\vec{1}^T \mat{R}[^T] \mat{B} \mat{N} \mat{R} \vec{F} = 
  \vec{w}^T \mat{R}[^T] \mat{B} \mat{N} \mat{R} \vec{f}
  - \vec{1}^T \mat{R}[^T] \mat{B} \mat{N} \mat{R} \vec{\psi}$. Therefore,
  \begin{equation}
    \vec{w}^T \mat{M} \partial_t \vec{u}
    =
    \vec{1}^T \mat{R}[^T] \mat{B} \mat{N} \mat{R} \vec{\psi}
    - \vec{w}^T \mat{R}[^T] \mat{B} \mat{N} \vecfnum.
  \end{equation}
  Since the numerical flux is defined per boundary, the contribution of one
  boundary between cells with indices $-$,$+$ is given as
  \begin{equation}
    (w_+ - w_-) \cdot \fnum - (\psi_+ - \psi_-),
  \end{equation}
  which vanishes for an entropy conservative flux $\fnum$ and is non-positive
  for an entropy stable flux.
  \qed
\end{proof}

\begin{remark}
\label{rem:diagonal-boundary-operators}
  A multi-dimensional analogue of \autoref{thm:entropy} can be obtained if the 
  contributions of the coordinate directions are summed up as mentioned in 
  Remark~\ref{rem:only-1dim}. However, the assumption of diagonal mass and
  boundary matrices is still crucial.
  To the author's knowledge, there are no known SBP operators on simplices in
  general with diagonal $\mat{R}[^T] \mat{B} \mat{N_j} \mat{R}$.
  In the framework of Hicken et al. \cite{hicken2016multidimensional}, this operator
  is called $\mathrm{E}_j$ and they mention (Remark 4 in section 4.2) that they 
  have not been able to get diagonal operators that are sufficiently accurate.
  However, using tensor products of Lobatto-Legendre nodes in cubes, these
  operators are diagonal.
  Additionally, it can be conjectured that it is possible to get diagonal operators 
  $\mat{R}[^T] \mat{B} \mat{N_j} \mat{R}$ if enough nodes are added at the
  boundaries. However, this would probably reduce the efficiency of the scheme.
\end{remark}

\begin{remark}
\label{rem:symmetry-and-entropy-conservation}
  To sum up, the semidiscretisation \eqref{eq:semidiscretisation} using the flux
  differencing form of the volume terms \eqref{eq:volume-terms} and surface terms
  \eqref{eq:surface-terms} with entropy stable numerical fluxes is entropy stable
  and high order accurate. However, additional dissipation will still be needed
  in general if discontinuities appear. Thus, it should only be considered as an
  entropy stable baseline scheme.
\end{remark}

\section{Entropy Conservative Fluxes}
\label{sec:fluxes}

In the semidiscrete setting of Tadmor \cite{tadmor1987numerical, tadmor2003entropy},
an entropy conservative numerical flux has to fulfil
\begin{equation}
\label{eq:EC}
  \jump{w} \cdot \fnumj - \jump{\psi_j} = 0,
\end{equation}
where $w$ are the entropy variables \eqref{eq:w}, $\fnumj$ is the numerical
flux in space direction $j$, $\psi_j$ is the flux potential in space direction $j$, and
\begin{equation}
  \jump{a} = a_+ - a_-
\end{equation}
denotes the jump of a quantity, cf. Definition~\ref{def:fnum-EC}.
Since the flux $f_j$ is the gradient of the potential $\psi_j$, i.e.
$f_j = \partial_w \psi_j$, the condition \eqref{eq:EC} for an entropy conservative
flux determines $\fnumj$ as an appropriate mean value of $f_j$.
Indeed, the entropy conservative flux proposed by Tadmor \cite[Equation (4.6a)]{tadmor1987numerical}
has the form of an integral mean
\begin{equation}
\label{eq:fnum-EC-Tadmor}
  \fnumj(w_-, w_+)
  =
  \int_{s=0}^1 f_j\left( u \left(w_- + s (w_+ - w_-) \right) \right) \dif s.
\end{equation}
However, this integral mean value is difficult to compute in general.
Tadmor \cite[Theorem 6.1]{tadmor2003entropy} proposed another integral mean based on
a piecewise linear path in phase space to compute an integral mean similar to
\eqref{eq:fnum-EC-Tadmor}. Nevertheless, another approach will be used here.

Following the well-known proverb \emph{``Differentiation is mechanics, integration
is art.''}, the integral mean can be exchanged by some kind of differential mean.
Sadly, there is no differential mean value theorem giving some kind of numerical
flux fulfilling \eqref{eq:EC} directly in general. However, the mean value theorem
can be used for scalar variables. Indeed, if a scalar conservation law is considered,
both the flux potential $\psi$ and the entropy variable $w$ in \eqref{eq:EC} are
scalar. Thus, the entropy conservative flux $\fnumj$ is uniquely determined as
$\fnumj = \jump{\psi_j} / \jump{w}$ for $\jump{w} \neq 0$.

A similar procedure can be used for systems of conservation laws, where the
entropy variables $w$ are vector-valued. Thus, expressing both the entropy
variables and the flux potential in a common set of scalar variables (e.g.
primitive variables), differential mean values can be used for each scalar variable.
There are several mean values that can be used for this task. The simplest one
is the arithmetic mean
\begin{equation}
\label{eq:arithmetic-mean}
  \mean{a} = (a_- + a_+) / 2,
\end{equation}
with corresponding product and chain rule
\begin{equation}
\label{eq:arithmetic-mean-rule}
  \jump{a b} = \mean{a} \jump{b} + \mean{b} \jump{a},
  \quad
  \jump{a^2} = 2 \mean{a} \jump{a}.
\end{equation}
This is enough to get some entropy conservative fluxes for the shallow water
equations, since the entropy variables $w$ and the flux potential $\psi$ can
be expressed as polynomials in both the primitive variables and the entropy
variables \cite{ranocha2017shallow}.
However, this is not true for the Euler equations. Therefore, other means have to
be used. Roe \cite{roe2006affordable} proposed the logarithmic mean
\begin{equation}
\label{eq:logarithmic-mean}
  \logmean{a} = \frac{a_+ - a_-}{\log a_+ - \log a_-},
\end{equation}
described in \cite{ismail2009affordable}, including a numerically stable
implementation. The corresponding chain rule reads as
\begin{equation}
\label{eq:logarithmic-mean-rule}
  \jump{\log a}
  =
  \jump{a} / \logmean{a}.
\end{equation}
As an example, the derivation of the entropy conservative flux of \cite{roe2006affordable}
is carried out in section~\ref{sec:z}. Thereafter, the basic idea is distilled
as Procedure~\ref{pro:affordable-EC-fluxes} in section~\ref{subsec:general-procedure}.
Afterwards, the framework of kinetic energy preserving fluxes of
\cite{jameson2008formulation} is presented and commented in section~\ref{subsec:KEP}.
Finally, the entropy conservative numerical flux of \cite{chandrashekar2013kinetic}
is given and several new fluxes are constructed. 
While the fluxes of\cite{roe2006affordable} and \cite{chandrashekar2013kinetic}
in sections~\ref{sec:z} and \ref{sec:rho-v-beta} are well-known in the literature,
the other ones are new.

\subsection{Using \texorpdfstring{$\sqrt{\frac{\rho}{p}}, \sqrt{\frac{\rho}{p}} v,
                            \sqrt{\rho p}$}{√(ρ/p),√(ρ/p)v,√(ρp)}
                            as Variables}
\label{sec:z}

The entropy conservative flux of \cite{roe2006affordable, ismail2009affordable}
can be derived using the variables
\begin{equation}
  z_1 := \sqrt{\frac{\rho}{p}}, \quad
  z_2 := \sqrt{\frac{\rho}{p}} v_x, \quad
  z_3 := \sqrt{\frac{\rho}{p}} v_y, \quad
  z_5 := \sqrt{\rho p}.
\end{equation}
In these variables, the flux potentials $\psi_{x/y} = \rho v_{x/y}$, the entropy $s$, 
and the entropy variables $w$ \eqref{eq:w} are given by
\begin{gather*}
\label{eq:psi-w-using-z}
\stepcounter{equation}\tag{\theequation}
  \psi_x = z_2 z_5,
  \quad
  \psi_y = z_3 z_5,
  \quad
  s = -(\gamma+1) \log z_1 - (\gamma-1) \log z_5,
  \\
  w
  =
  \left(
    \frac{\gamma}{\gamma-1} - \frac{s}{\gamma-1} - \frac{1}{2} z_2^2 - \frac{1}{2} z_3^2,\,
    z_1 z_2,\,
    z_1 z_3,\,
    -z_1^2
  \right)^T.
\end{gather*}
Thus, the jumps can be expressed using the chain rules / discrete differential mean
value theorems \eqref{eq:arithmetic-mean-rule} and \eqref{eq:logarithmic-mean-rule} as
\begin{small}
\begin{equation}
\begin{aligned}
  \jump{w_1}
  =&
  - \frac{1}{\gamma-1} \jump{s} - \frac{1}{2} \jump{ z_2^2 } - \frac{1}{2} \jump{ z_3^2 }
  =
    \frac{\gamma+1}{\gamma-1} \jump{ \log z_1 }
  + \jump{ \log z_5 }
  - \frac{1}{2} \jump{ z_2^2 }
  - \frac{1}{2} \jump{ z_3^2 }
  \\
  =&
  \frac{\gamma+1}{\gamma-1} \frac{1}{\logmean{z_1}} \jump{z_1}
  + \frac{1}{\logmean{z_5}} \jump{z_5}
  - \mean{z_2} \jump{z_2}
  - \mean{z_3} \jump{z_3},
\end{aligned}
\end{equation}
\begin{gather*}
\stepcounter{equation}\tag{\theequation}
  \jump{w_2}
  =
  \jump{z_1 z_2}
  =
  \mean{z_1} \jump{z_2}
  + \mean{z_2} \jump{z_1},
  \quad
  \jump{w_3}
  =
  \jump{z_1 z_3}
  =
  \mean{z_1} \jump{z_3}
  + \mean{z_3} \jump{z_1},
  \\
  \jump{w_4}
  =
  - \jump{z_1^2}
  =
  - 2 \mean{z_1} \jump{z_1},
  \quad
  \jump{\psi_x}
  =
  \jump{z_2 z_5}
  =
  \mean{z_2} \jump{z_5}
  + \mean{z_5} \jump{z_2},
\end{gather*}
\end{small}%
and the entropy conservation conditions $\jump{w} \cdot \fnumx - \jump{\psi_x} = 0$
\eqref{eq:EC} becomes ($\fnumy$ analogously)
\begin{small}
\begin{equation}
\begin{aligned}
  0
  =&
  \left(
    \frac{\gamma+1}{\gamma-1} \frac{1}{\logmean{z_1}} \fnumx_{\rho}
    + \mean{z_2} \fnumx_{\rho v_x}
    + \mean{z_3} \fnumx_{\rho v_y}
    - 2 \mean{z_1} \fnumx_{\rho e}
  \right) \jump{z_1}
  \\&
  + \left(
    - \mean{z_2} \fnumx_{\rho}
    + \mean{z_1} \fnumx_{\rho v_x}
    - \mean{z_5}
  \right) \jump{z_2}
  + \left(
    - \mean{z_3} \fnumx_{\rho}
    + \mean{z_1} \fnumx_{\rho v_y}
  \right) \jump{z_3}
  \\&
  + \left(
    \frac{1}{\logmean{z_5}} \fnumx_{\rho}
    - \mean{z_2}
  \right) \jump{z_5}.
\end{aligned}
\end{equation}
\end{small}%
Thus, the fluxes ($\fnumy$ analogously)
\begin{small}
\begin{gather*}
\label{eq:Roe-EC}
\stepcounter{equation}\tag{\theequation}
  \fnumx_{\rho}
  =
  \mean{z_2} \logmean{z_5},
  \;
  \fnumx_{\rho v_x}
  =
  \frac{\mean{z_2}}{\mean{z_1}} \fnumx_{\rho}
  + \frac{\mean{z_5}}{\mean{z_1}},
  \;
  \fnumx_{\rho v_y}
  =
  \frac{\mean{z_3}}{\mean{z_1}} \fnumx_{\rho},
  \\
  \fnumx_{\rho e}
  =
  \frac{1}{2} \frac{\gamma+1}{\gamma-1} \frac{1}{\mean{z_1} \logmean{z_1}} \fnumx_{\rho}
  + \frac{1}{2} \frac{\mean{z_2}}{\mean{z_1}} \fnumx_{\rho v_x}
  + \frac{1}{2} \frac{\mean{z_3}}{\mean{z_1}} \fnumx_{\rho v_y},
\end{gather*}
\end{small}%
proposed (in one space dimension) in \cite{roe2006affordable, ismail2009affordable}
can be seen to be consistent and entropy conservative.
However, by this choice of variables $z$, the pressure influences the numerical
density flux. As explained by Derigs et al. \cite{derigs2016novelAveraging}, this
can lead to problems if there are discontinuities in the pressure, see also
Remark~\ref{rem:positivity-density} and the numerical tests in section
\ref{sec:numerical-tests}.

\subsection{General Procedure to Construct Affordable Entropy Conservative Fluxes}
\label{subsec:general-procedure}

The general procedure to construct affordable entropy conservative fluxes that
has been mentioned in the introduction of this section has been exemplified
in the previous section~\ref{sec:z}. Similarly, the affordable, entropy conservative 
numerical flux of \cite{chandrashekar2013kinetic} can be constructed using the 
same general approach that can be described as
\begin{procedure}
\label{pro:affordable-EC-fluxes}
  \begin{enumerate}
    \item 
    Express the flux potentials $\psi$ and the entropy variables $w$
    \eqref{eq:w} using the chosen set of variables.
    
    \item
    Express the jumps of $\psi, w$ as products of some mean values and jumps of the
    chosen variables using some kind of product/chain rule as in the mean
    value theorem.
  \end{enumerate}
\end{procedure}

\subsection{Kinetic Energy Preservation}
\label{subsec:KEP}

Besides entropy conservation / stability (cf. Definition~\ref{def:fnum-EC}), 
kinetic energy preservation has been proposed as a desirable property of numerical
fluxes for the Euler equations \eqref{eq:Euler} and has therefore been used as a 
design criterion \cite{jameson2008formulation, chandrashekar2013kinetic, gassner2016split}.
The kinetic energy $\frac{1}{2} \rho v^2$ satisfies (for smooth solutions)
\begin{equation}
  \partial_t \left( \frac{1}{2} \rho v^2 \right)
  + \partial_x \left( \frac{1}{2} \rho v^2 v_x \right)
  + \partial_y \left( \frac{1}{2} \rho v^2 v_y \right)
  + v_x \partial_x p
  + v_y \partial_y p
  = 0.
\end{equation}
In order to mimic this behaviour discretely in one space dimension, 
Jameson~\cite[Equation (2.23)]{jameson2008formulation} formulated the following 
condition, also used in \cite[Section 3]{chandrashekar2013kinetic} and 
\cite[Equation (3.23)]{gassner2016split}.
\begin{definition}
\label{def:fnum-KEP}
  A numerical flux for the Euler equations \eqref{eq:Euler} is said to be
  kinetic energy preserving, if the momentum flux $\fnum_{\rho v}$ can be written
  as $\fnum_{\rho v} = \mean{v} \fnum_{\rho} + \pnum$, where $\pnum$ is a 
  consistent approximation of the pressure.
\end{definition}
\begin{remark}
\label{rem:fnum-KEP}
  Every consistent numerical flux $\fnum_{\rho v}$ can be written in the form
  required in Definition~\ref{def:fnum-KEP}, if some differences are accepted, 
  i.e. if $\pnum = \fnum_{\rho v} - \mean{v} \fnum_{\rho}$ is accepted as numerical
  approximation of the pressure. Thus, Definition~\ref{def:fnum-KEP} alone does 
  not seem to yield a useful criterion for the construction of numerical fluxes,
  i.e. the structural property ``kinetic  energy preserving'' is not well-defined.
\end{remark}
\begin{remark}
\label{rem:design-criteria}
  In this article, some properties of numerical fluxes proposed in the literature
  as desirable design criteria are used, including entropy conservation / 
  stability (Definition~\ref{def:fnum-EC}) and kinetic energy preservation 
  (Definition~\ref{def:fnum-KEP}).
  It is not the purpose of this article to judge these criteria or attempt to
  use them for convergence proofs. However, as described in Remark~\ref{rem:fnum-KEP},
  the property ``kinetic energy preserving'' should be considered carefully. 
  Furthermore, robustness properties of entropy conservative numerical fluxes
  enhanced with additional dissipation operators are investigated in 
  section~\ref{sec:surface-fluxes}. There, some fluxes fulfilling an additional
  structural property are proven to preserve the non-negativity of the density
  under a non-vanishing CFL condition.
\end{remark}

\subsection{Using \texorpdfstring{$\rho, v, \beta$}{ρ,v,β} as Variables}
\label{sec:rho-v-beta}

Using the inverse of the temperature
\begin{equation}
\label{eq:beta}
  \beta = \frac{1}{2 R T} = \frac{\rho}{2 p},
\end{equation}
\citet{chandrashekar2013kinetic} derived some entropy conservative fluxes.
The flux potential and the entropy variables are
\begin{gather}
\label{eq:psi-w-using-rho-v-beta}
  \psi_x = \rho v_x,
  \quad
  \psi_y= \rho v_y,
  \\
  w
  =
  \left(
    \frac{\gamma}{\gamma-1} - \frac{s}{\gamma-1} - \beta v^2,\,
    2 \beta v_x,\,
    2 \beta v_y,\,
    -2 \beta
  \right)^T,
  \quad
  s = \log \frac{p}{\rho^\gamma}
  = - \log \beta - (\gamma-1) \log \rho - \log 2.
\end{gather}

\subsubsection{Variant 1}

Writing the jumps using the chain rules \eqref{eq:arithmetic-mean-rule} and
\eqref{eq:logarithmic-mean-rule} as
\begin{equation}
\begin{aligned}
  \jump{w_1}
  =&
  - \frac{1}{\gamma-1} \jump{s} - \jump{ \beta v^2 }
  =
  \jump{ \log \rho }
  + \frac{1}{\gamma-1} \jump{ \log \beta }
  - \jump{ \beta v^2 }
  \\
  =&
  \frac{1}{\logmean{\rho}} \jump{\rho}
  + \frac{1}{\gamma-1} \frac{1}{\logmean{\beta}} \jump{\beta}
  - \mean{v_x^2} \jump{\beta}
  - \mean{v_y^2} \jump{\beta}
  - 2 \mean{\beta} \mean{v_x} \jump{v_x}
  - 2 \mean{\beta} \mean{v_y} \jump{v_y},
  \\
  \jump{w_2}
  =&
  2 \jump{\beta v_x}
  =
  2 \mean{\beta} \jump{v_x}
  + 2 \mean{v_x} \jump{\beta},
  \\
  \jump{w_3}
  =&
  2 \jump{\beta v_y}
  =
  2 \mean{\beta} \jump{v_y}
  + 2 \mean{v_y} \jump{\beta},
  \\
  \jump{w_4}
  =&
  - 2 \jump{\beta},
  \\
  \jump{\psi_x}
  =&
  \jump{\rho v_x}
  =
  \mean{\rho} \jump{v_x}
  + \mean{v_x} \jump{\rho},
  \\
  \jump{\psi_y}
  =&
  \jump{\rho v_y}
  =
  \mean{\rho} \jump{v_y}
  + \mean{v_y} \jump{\rho},
\end{aligned}
\end{equation}
the entropy conservation conditions $\jump{w} \cdot f^{\mathrm{num},x/y}
- \jump{\psi_{x/y}} = 0$ \eqref{eq:EC} become
\begin{equation}
\begin{aligned}
  0
  =&
  \left(
    \frac{1}{\logmean{\rho}} \fnumx_{\rho}
    - \mean{v_x}
  \right) \jump{\rho}
  + \left(
    - 2 \mean{\beta} \mean{v_x} \fnumx_{\rho}
    + 2 \mean{\beta} \fnumx_{\rho v_x}
    - \mean{\rho}
  \right) \jump{v_x}
  \\&
  + \left(
    - 2 \mean{\beta} \mean{v_y} \fnumx_{\rho}
    + 2 \mean{\beta} \fnumx_{\rho v_y}
  \right) \jump{v_y}
  + \Bigg(
    \frac{1}{\gamma-1} \frac{1}{\logmean{\beta}} \fnumx_{\rho}
    - \mean{v_x^2} \fnumx_{\rho}
    \\&
    - \mean{v_y^2} \fnumx_{\rho}
    + 2 \mean{v_x} \fnumx_{\rho v_x}
    + 2 \mean{v_y} \fnumx_{\rho v_y}
    - 2 \fnumx_{\rho e}
  \Bigg) \jump{\beta},
  \\
  0
  =&
  \left(
    \frac{1}{\logmean{\rho}} \fnumy_{\rho}
    - \mean{v_y}
  \right) \jump{\rho}
  + \left(
    - 2 \mean{\beta} \mean{v_x} \fnumy_{\rho}
    + 2 \mean{\beta} \fnumy_{\rho v_x}
  \right) \jump{v_x}
  \\&
  + \left(
    - 2 \mean{\beta} \mean{v_y} \fnumy_{\rho}
    + 2 \mean{\beta} \fnumy_{\rho v_y}
    - \mean{\rho}
  \right) \jump{v_y}
  + \Bigg(
    \frac{1}{\gamma-1} \frac{1}{\logmean{\beta}} \fnumy_{\rho}
    - \mean{v_x^2} \fnumy_{\rho}
    \\&
    - \mean{v_y^2} \fnumy_{\rho}
    + 2 \mean{v_x} \fnumy_{\rho v_x}
    + 2 \mean{v_y} \fnumy_{\rho v_y}
    - 2 \fnumy_{\rho e}
  \Bigg) \jump{\beta}.
\end{aligned}
\end{equation}
Thus, the fluxes
\begin{equation}
\label{eq:Chandrashekar-rho-v-beta-EC-KEP}
\begin{aligned}
  \fnumx &
  \begin{dcases}
  \fnumx_{\rho}
  =
  \logmean{\rho} \mean{v_x},
  \\
  \fnumx_{\rho v_x}
  =
  \mean{v_x} \fnumx_{\rho} + \frac{\mean{\rho}}{2 \mean{\beta}},
  \\
  \fnumx_{\rho v_y}
  =
  \mean{v_y} \fnumx_{\rho},
  \\
  \fnumx_{\rho e}
  =
  \frac{1}{2(\gamma-1)} \frac{1}{\logmean{\beta}} \fnumx_{\rho}
  - \frac{ \mean{v_x^2} + \mean{v_y^2} }{2}  \fnumx_{\rho}
  + \mean{v_x} \fnumx_{\rho v_x}
  + \mean{v_y} \fnumx_{\rho v_y},
  \end{dcases}
  \\
  \fnumy &
  \begin{dcases}
  \fnumy_{\rho}
  =
  \logmean{\rho} \mean{v_y},
  \\
  \fnumy_{\rho v_x}
  =
  \mean{v_x} \fnumy_{\rho},
  \\
  \fnumy_{\rho v_y}
  =
  \mean{v_y} \fnumy_{\rho} + \frac{\mean{\rho}}{2 \mean{\beta}},
  \\
  \fnumy_{\rho e}
  =
  \frac{1}{2(\gamma-1)} \frac{1}{\logmean{\beta}} \fnumy_{\rho}
  - \frac{ \mean{v_x^2} + \mean{v_y^2} }{2}  \fnumy_{\rho}
  + \mean{v_x} \fnumy_{\rho v_x}
  + \mean{v_y} \fnumy_{\rho v_y},
  \end{dcases}
\end{aligned}
\end{equation}
proposed by \citet{chandrashekar2013kinetic} can be seen to be entropy conservative.
Since $p = \frac{\rho}{2\beta}$, they are consistent.
Additionally, they are kinetic energy preserving with numerical pressure flux
$\pnum = \frac{\mean{\rho}}{2 \mean{\beta}}$.

\subsubsection{Variant 2}

Choosing another possibility to split the jumps
\begin{equation}
\begin{aligned}
  \jump{ \beta v_{x/y}^2 }
  =&
  \mean{\beta v_{x/y}} \jump{v_{x/y}} + \mean{v_{x/y}} \jump{\beta v_{x/y}}
  \\
  =& 
  \mean{\beta v_{x/y}} \jump{v_{x/y}} + \mean{\beta} \mean{v_{x/y}} \jump{v_{x/y}}
  + \mean{v_{x/y}}^2 \jump{\beta},
\end{aligned}
\end{equation}
the entropy conservation conditions \eqref{eq:EC} can be written as
\begin{equation}
\begin{aligned}
  0
  =&
  \left(
    \frac{1}{\logmean{\rho}} \fnumx_{\rho}
    - \mean{v_x}
  \right) \jump{\rho}
  \\&
  + \left(
    - \mean{\beta v_x} \fnumx_{\rho}
    - \mean{\beta} \mean{v_x} \fnumx_{\rho}
    + 2 \mean{\beta} \fnumx_{\rho v_x}
    - \mean{\rho}
  \right) \jump{v_x}
  \\&
  + \left(
    - \mean{\beta v_y} \fnumx_{\rho}
    - \mean{\beta} \mean{v_y} \fnumx_{\rho}
    + 2 \mean{\beta} \fnumx_{\rho v_y}
  \right) \jump{v_y}
  \\&
  + \Bigg(
    \frac{1}{\gamma-1} \frac{1}{\logmean{\beta}} \fnumx_{\rho}
    - \mean{v_x^2} \fnumx_{\rho}
    - \mean{v_y^2} \fnumx_{\rho}
    \\&\qquad
    + 2 \mean{v_x} \fnumx_{\rho v_x}
    + 2 \mean{v_y} \fnumx_{\rho v_y}
    - 2 \fnumx_{\rho e}
  \Bigg) \jump{\beta},
  \\
  0
  =&
  \left(
    \frac{1}{\logmean{\rho}} \fnumy_{\rho}
    - \mean{v_y}
  \right) \jump{\rho}
  \\&
  + \left(
    - \mean{\beta v_x} \fnumy_{\rho}
    - \mean{\beta} \mean{v_x} \fnumy_{\rho}
    + 2 \mean{\beta} \fnumy_{\rho v_x}
  \right) \jump{v_x}
  \\&
  + \left(
    - \mean{\beta v_y} \fnumy_{\rho}
    - \mean{\beta} \mean{v_y} \fnumy_{\rho}
    + 2 \mean{\beta} \fnumy_{\rho v_y}
    - \mean{\rho}
  \right) \jump{v_y}
  \\&
  + \Bigg(
    \frac{1}{\gamma-1} \frac{1}{\logmean{\beta}} \fnumy_{\rho}
    - \mean{v_x^2} \fnumy_{\rho}
    - \mean{v_y^2} \fnumy_{\rho}
    \\&\qquad
    + 2 \mean{v_x} \fnumy_{\rho v_x}
    + 2 \mean{v_y} \fnumy_{\rho v_y}
    - 2 \fnumy_{\rho e}
  \Bigg) \jump{\beta}.
\end{aligned}
\end{equation}
Thus, the fluxes
\begin{equation}
\label{eq:Chandrashekar-rho-v-beta-EC-notKEP}
\begin{aligned}
  \fnumx&
  \begin{dcases}
  \fnumx_{\rho}
  =
  \logmean{\rho} \mean{v_x},
  \\
  \fnumx_{\rho v_x}
  =
  \frac{ \mean{\beta v_x}
  + \mean{\beta} \mean{v_x} }{ 2 \mean{\beta} } \fnumx_{\rho}
  + \frac{\mean{\rho}}{2 \mean{\beta}},
  \\
  \fnumx_{\rho v_y}
  =
  \frac{ \mean{\beta v_y}
  + \mean{\beta} \mean{v_y} }{ 2 \mean{\beta} } \fnumx_{\rho},
  \\
  \fnumx_{\rho e}
  =
  \frac{1}{2(\gamma-1)} \frac{1}{\logmean{\beta}} \fnumx_{\rho}
  - \frac{  \mean{v_x^2} + \mean{v_y^2} }{2} \fnumx_{\rho}
  + \mean{v_x} \fnumx_{\rho v_x}
  + \mean{v_y} \fnumx_{\rho v_y},
  \end{dcases}
  \\
  \fnumy&
  \begin{dcases}
  \fnumy_{\rho}
  =
  \logmean{\rho} \mean{v_y},
  \\
  \fnumy_{\rho v_x}
  =
  \frac{ \mean{\beta v_x}
  + \mean{\beta} \mean{v_x} }{ 2 \mean{\beta} } \fnumy_{\rho},
  \\
  \fnumy_{\rho v_y}
  =
  \frac{ \mean{\beta v_y}
  + \mean{\beta} \mean{v_y} }{ 2 \mean{\beta} } \fnumy_{\rho}
  + \frac{\mean{\rho}}{2 \mean{\beta}},
  \\
  \fnumy_{\rho e}
  =
  \frac{1}{2(\gamma-1)} \frac{1}{\logmean{\beta}} \fnumy_{\rho}
  - \frac{  \mean{v_x^2} + \mean{v_y^2} }{2} \fnumy_{\rho}
  + \mean{v_x} \fnumy_{\rho v_x}
  + \mean{v_y} \fnumy_{\rho v_y},
  \end{dcases}
\end{aligned}
\end{equation}
proposed by \citet{chandrashekar2013kinetic} can be seen to be entropy conservative.
Since the property ``kinetic energy preserving'' is not well-defined, they could
possibly be considered as kinetic energy preserving, cf. Remark~\ref{rem:fnum-KEP}.

\subsection{Using \texorpdfstring{$\rho, v, \frac{1}{p}$}{ρ,v,1/p} as Variables}
\label{sec:rho-v-1/p}

Using the variables $\rho, v, \frac{1}{p}$, the flux potentials
and the entropy variables \eqref{eq:w} can be written as
\begin{gather}
\label{eq:psi-w-using-rho-v-1/p}
  \psi_x = \rho v_x,
  \quad
  \psi_y = \rho v_y,
  \\
  w
  =
  \left(
    \frac{\gamma}{\gamma-1} - \frac{s}{\gamma-1} - \frac{\rho v^2}{2 p},\,
    \frac{\rho v_x}{p},\,
    \frac{\rho v_y}{p},\,
    -\frac{\rho}{p}\,
  \right)^T,
  \quad
  s = \log \frac{p}{\rho^\gamma} = - \log \frac{1}{p} - \gamma \log \rho.
\end{gather}

One variant to write the jumps is given by setting
\begin{equation}
\begin{aligned}
  \jump{w_1}
  =&
  - \frac{1}{\gamma-1} \jump{s} - \frac{1}{2} \jump{ \frac{\rho v^2}{p} }
  =
  \frac{1}{\gamma-1} \jump{ \log \frac{1}{p}
  + \gamma \log \rho}
  - \frac{1}{2} \jump{ \frac{\rho v^2}{p} }
  \\
  =&
  \frac{1}{\gamma-1} \frac{1}{\logmean{\inv p}} \jump{\inv p}
  + \frac{\gamma}{\gamma-1} \frac{1}{\logmean{\rho}} \jump{\rho}
  - \mean{\frac{\rho}{p}} \mean{v_x} \jump{v_x}
  \\&
  - \mean{\frac{\rho}{p}} \mean{v_y} \jump{v_y}
  - \mean{\rho} \frac{ \mean{v_x^2} + \mean{v_y^2} }{2} \jump{\inv p}
  - \frac{ \mean{v_x^2} + \mean{v_y^2} }{2} \mean{\inv p} \jump{\rho},
  \\
  \jump{w_2}
  =&
  \jump{ \frac{\rho v_x}{p} }
  =
  \mean{\frac{\rho}{p}} \jump{v_x}
  + \mean{\rho} \mean{v_x} \jump{\inv p}
  + \mean{v_x} \mean{\inv p} \jump{\rho},
  \\
  \jump{w_3}
  =&
  \jump{ \frac{\rho v_y}{p} }
  =
  \mean{\frac{\rho}{p}} \jump{v_y}
  + \mean{\rho} \mean{v_y} \jump{\inv p}
  + \mean{v_y} \mean{\inv p} \jump{\rho},
  \\
  \jump{w_4}
  =&
  - \jump{ \frac{\rho}{p} }
  =
  - \mean{\rho} \jump{\inv p}
  - \mean{\inv p} \jump{\rho},
  \\
  \jump{\psi_x}
  =&
  \jump{\rho v_x}
  =
  \mean{\rho} \jump{v_x}
  + \mean{v_x} \jump{\rho},
  \\
  \jump{\psi_y}
  =&
  \jump{\rho v_y}
  =
  \mean{\rho} \jump{v_y}
  + \mean{v_y} \jump{\rho}.
\end{aligned}
\end{equation}
Therefore, the entropy conservation conditions \eqref{eq:EC} can be written as
\begin{equation}
\begin{aligned}
  0
  =&
  \Bigg(
    \frac{\gamma}{\gamma-1} \frac{1}{\logmean{\rho}} \fnumx_{\rho}
    - \frac{ \mean{v_x^2} + \mean{v_y^2} }{2} \mean{\inv p} \fnumx_{\rho}
    + \mean{\inv p} \mean{v_x} \fnumx_{\rho v_x}
    \\&\qquad
    + \mean{\inv p} \mean{v_y} \fnumx_{\rho v_y}
    - \mean{\inv p} \fnumx_{\rho e}
    - \mean{v_x} 
  \Bigg) \jump{\rho}
  \\&
  + \left(
    - \mean{\frac{\rho}{p}} \mean{v_x} \fnumx_{\rho}
    + \mean{\frac{\rho}{p}} \fnumx_{\rho v_x}
    - \mean{\rho}
  \right) \jump{v_x}
  + \left(
    - \mean{\frac{\rho}{p}} \mean{v_y} \fnumx_{\rho}
    + \mean{\frac{\rho}{p}} \fnumx_{\rho v_y}
  \right) \jump{v_y}
  \\&
  + \Bigg(
    \frac{1}{\gamma-1} \frac{1}{\logmean{\inv p}} \fnumx_{\rho}
    - \mean{\rho} \frac{ \mean{v_x^2} + \mean{v_y^2} }{2} \fnumx_{\rho}
    + \mean{\rho} \mean{v_x} \fnumx_{\rho v_x}
    \\&\qquad
    + \mean{\rho} \mean{v_y} \fnumx_{\rho v_y}
    - \mean{\rho} \fnumx_{\rho e}
  \Bigg) \jump{\inv p},
\end{aligned}
\end{equation}
\begin{equation}
\begin{aligned}
  0
  =&
  \Bigg(
    \frac{\gamma}{\gamma-1} \frac{1}{\logmean{\rho}} \fnumy_{\rho}
    - \frac{ \mean{v_x^2} + \mean{v_y^2} }{2} \mean{\inv p} \fnumy_{\rho}
    + \mean{\inv p} \mean{v_x} \fnumy_{\rho v_x}
    \\&\qquad
    + \mean{\inv p} \mean{v_y} \fnumy_{\rho v_y}
    - \mean{\inv p} \fnumy_{\rho e}
    - \mean{v_y} 
  \Bigg) \jump{\rho}
  \\&
  + \left(
    - \mean{\frac{\rho}{p}} \mean{v_x} \fnumy_{\rho}
    + \mean{\frac{\rho}{p}} \fnumy_{\rho v_x}
  \right) \jump{v_x}
  + \left(
    - \mean{\frac{\rho}{p}} \mean{v_y} \fnumy_{\rho}
    + \mean{\frac{\rho}{p}} \fnumy_{\rho v_y}
    - \mean{\rho}
  \right) \jump{v_y}
  \\&
  + \Bigg(
    \frac{1}{\gamma-1} \frac{1}{\logmean{\inv p}} \fnumy_{\rho}
    - \mean{\rho} \frac{ \mean{v_x^2} + \mean{v_y^2} }{2} \fnumy_{\rho}
    + \mean{\rho} \mean{v_x} \fnumy_{\rho v_x}
    \\&\qquad
    + \mean{\rho} \mean{v_y} \fnumy_{\rho v_y}
    - \mean{\rho} \fnumy_{\rho e}
  \Bigg) \jump{\inv p}.
\end{aligned}
\end{equation}
Thus, the fluxes
\begin{equation}
\label{eq:rho-v-1/p-EC-KEP}
\begin{aligned}
  \fnumx&
  \begin{dcases}
  \fnumx_{\rho}
  =
  (\gamma-1) \left(
    \frac{\gamma}{\logmean{\rho}}
    - \frac{\mean{\inv p}}{\logmean{\inv p} \mean{\rho}}
  \right)^{-1} \mean{v_x},
  \\
  \fnumx_{\rho v_x}
  =
  \mean{v_x} \fnumx_{\rho}
  + \frac{\mean{\rho}}{\mean{\rho / p}},
  \\
  \fnumx_{\rho v_y}
  =
  \mean{v_y} \fnumx_{\rho},
  \\
  \fnumx_{\rho e}
  =
  \left(
    \frac{1}{\gamma-1} \frac{1}{\logmean{\inv p} \mean{\rho}}
    + \mean{v_x}^2 + \mean{v_y}^2
    - \frac{  \mean{v_x^2} + \mean{v_y^2} }{2} 
  \right) \fnumx_{\rho}
  + \frac{\mean{\rho}\mean{v_x}}{\mean{\rho / p}},
  \end{dcases}
  \\
  \fnumy&
  \begin{dcases}
  \fnumy_{\rho}
  =
  (\gamma-1) \left(
    \frac{\gamma}{\logmean{\rho}}
    - \frac{\mean{\inv p}}{\logmean{\inv p} \mean{\rho}}
  \right)^{-1} \mean{v_y},
  \\
  \fnumy_{\rho v_x}
  =
  \mean{v_x} \fnumy_{\rho},
  \\
  \fnumy_{\rho v_y}
  =
  \mean{v_y} \fnumy_{\rho}
  + \frac{\mean{\rho}}{\mean{\rho / p}},
  \\
  \fnumy_{\rho e}
  =
  \left(
    \frac{1}{\gamma-1} \frac{1}{\logmean{\inv p} \mean{\rho}}
    + \mean{v_x}^2 + \mean{v_y}^2
    - \frac{  \mean{v_x^2} + \mean{v_y^2} }{2} 
  \right) \fnumy_{\rho}
  + \frac{\mean{\rho}\mean{v_y}}{\mean{\rho / p}},
  \end{dcases}
\end{aligned}
\end{equation}
can be seen to be entropy conservative and consistent.
Additionally, they are kinetic energy preserving with numerical pressure flux
$\pnum = \frac{\mean{\rho}}{\mean{\rho / p}}$.

Again, similarly to the flux \eqref{eq:Roe-EC} of \citet{roe2006affordable,
ismail2009affordable}, the pressure influences the numerical density flux,
leading to some problems as explained by \citet{derigs2016novelAveraging},
see also Remark~\ref{rem:positivity-density} and the numerical tests in section
\ref{sec:numerical-tests}.

\subsection{Using \texorpdfstring{$\rho, v, p$}{ρ,v,p} as Variables}
\label{sec:rho-v-p}

Using the variables $\rho, v, p$, the flux potentials
and the entropy variables \eqref{eq:w} can be written as
\begin{gather}
\label{eq:psi-w-using-rho-v-p}
  \psi_x = \rho v_x,
  \quad
  \psi_y = \rho v_y,
  \\
  w
  =
  \left(
    \frac{\gamma}{\gamma-1} - \frac{s}{\gamma-1} - \frac{\rho v^2}{2 p},\,
    \frac{\rho v_x}{p},\,
    \frac{\rho v_y}{p},\,
    -\frac{\rho}{p}
  \right)^T,
  \quad
  s = \log \frac{p}{\rho^\gamma} = \log p - \gamma \log \rho.
\end{gather}
In order to handle the terms $\frac{1}{p}$, a new mean value has to be used.
Since
\begin{equation}
  \jump{\frac{1}{a}}
  =
  \frac{1}{a_+} - \frac{1}{a_-}
  =
  \frac{a_- - a_+}{a_+ a_-},
\end{equation}
the \emph{geometric mean}
\begin{equation}
\label{eq:geometric-mean}
  \geomean{a} := \sqrt{a_+ a_-},
\end{equation}
fulfils
\begin{equation}
\label{eq:geometric-mean-rule}
  \jump{\frac{1}{a}}
  =
  -\frac{1}{\geomean{a}^2} \jump{a}.
\end{equation}

One variant to write the jumps is
\begin{equation}
\begin{aligned}
  \jump{w_1}
  =&
  - \frac{1}{\gamma-1} \jump{s} - \frac{1}{2} \jump{ \frac{\rho v^2}{p} }
  =
  \frac{1}{\gamma-1} \jump{ - \log p + \gamma \log \rho}
  - \frac{1}{2} \jump{ \frac{\rho v^2}{p} }
  \\
  =&
  - \frac{1}{\gamma-1} \frac{1}{\logmean{p}} \jump{p}
  + \frac{\gamma}{\gamma-1} \frac{1}{\logmean{\rho}} \jump{\rho}
  - \mean{\frac{\rho}{p}} \mean{v_x} \jump{v_x}
  \\&
  - \mean{\frac{\rho}{p}} \mean{v_y} \jump{v_y}
  + \mean{\rho} \frac{ \mean{v_x^2} + \mean{v_y^2} }{2 \geomean{p}^2} \jump{p}
  - \frac{ \mean{v_x^2} + \mean{v_y^2} }{2} \mean{\inv p} \jump{\rho},
  \\
  \jump{w_2}
  =&
  \jump{ \frac{\rho v_x}{p} }
  =
  \mean{\frac{\rho}{p}} \jump{v_x}
  - \frac{ \mean{\rho} \mean{v_x} }{ \geomean{p}^2 } \jump{p}
  + \mean{\inv p} \mean{v_x} \jump{\rho},
  \\
  \jump{w_3}
  =&
  \jump{ \frac{\rho v_y}{p} }
  =
  \mean{\frac{\rho}{p}} \jump{v_y}
  - \frac{ \mean{\rho} \mean{v_y} }{ \geomean{p}^2 } \jump{p}
  + \mean{\inv p} \mean{v_y} \jump{\rho},
  \\
  \jump{w_4}
  =&
  - \jump{ \frac{\rho}{p} }
  =
    \frac{ \mean{\rho} }{ \geomean{p}^2 } \jump{p}
  - \mean{\inv p} \jump{\rho},
  \\
  \jump{\psi}
  =&
  \jump{\rho v}
  =
  \mean{\rho} \jump{v}
  + \mean{v} \jump{\rho}.
\end{aligned}
\end{equation}
Therefore, the entropy conservation conditions \eqref{eq:EC} can be written as
\begin{equation}
\begin{aligned}
  0
  =&
  \Bigg(
    \frac{\gamma}{\gamma-1} \frac{1}{\logmean{\rho}} \fnumx_{\rho}
    - \frac{ \mean{v_x^2} + \mean{v_y^2} }{2} \mean{\inv p} \fnumx_{\rho}
    + \mean{\inv p} \mean{v_x} \fnumx_{\rho v_x}
    \\&\qquad
    + \mean{\inv p} \mean{v_y} \fnumx_{\rho v_y}
    - \mean{\inv p} \fnumx_{\rho e}
    - \mean{v_x}
  \Bigg) \jump{\rho}
  \\&
  + \left(
    - \mean{\frac{\rho}{p}} \mean{v_x} \fnumx_{\rho}
    + \mean{\frac{\rho}{p}} \fnumx_{\rho v_x}
    - \mean{\rho}
  \right) \jump{v_x}
  + \left(
    - \mean{\frac{\rho}{p}} \mean{v_y} \fnumx_{\rho}
    + \mean{\frac{\rho}{p}} \fnumx_{\rho v_y}
  \right) \jump{v_y}
  \\&
  + \Bigg(
    - \frac{1}{\gamma-1} \frac{1}{\logmean{p}} \fnumx_{\rho}
    + \mean{\rho} \frac{ \mean{v_x^2} + \mean{v_y^2} }{2 \geomean{p}^2} \fnumx_{\rho}
    - \frac{ \mean{\rho} \mean{v_x} }{ \geomean{p}^2 } \fnumx_{\rho v_x}
    \\&\qquad
    - \frac{ \mean{\rho} \mean{v_y} }{ \geomean{p}^2 } \fnumx_{\rho v_y}
    + \frac{ \mean{\rho} }{ \geomean{p}^2 } \fnumx_{\rho e}
  \Bigg) \jump{p},
\end{aligned}
\end{equation}
\begin{equation}
\begin{aligned}
  0
  =&
  \Bigg(
    \frac{\gamma}{\gamma-1} \frac{1}{\logmean{\rho}} \fnumy_{\rho}
    - \frac{ \mean{v_x^2} + \mean{v_y^2} }{2} \mean{\inv p} \fnumy_{\rho}
    + \mean{\inv p} \mean{v_x} \fnumy_{\rho v_x}
    \\&\qquad
    + \mean{\inv p} \mean{v_y} \fnumy_{\rho v_y}
    - \mean{\inv p} \fnumy_{\rho e}
    - \mean{v_y}
  \Bigg) \jump{\rho}
  \\&
  + \left(
    - \mean{\frac{\rho}{p}} \mean{v_x} \fnumy_{\rho}
    + \mean{\frac{\rho}{p}} \fnumy_{\rho v_x}
  \right) \jump{v_x}
  + \left(
    - \mean{\frac{\rho}{p}} \mean{v_y} \fnumy_{\rho}
    + \mean{\frac{\rho}{p}} \fnumy_{\rho v_y}
    - \mean{\rho}
  \right) \jump{v_y}
  \\&
  + \Bigg(
    - \frac{1}{\gamma-1} \frac{1}{\logmean{p}} \fnumy_{\rho}
    + \mean{\rho} \frac{ \mean{v_x^2} + \mean{v_y^2} }{2 \geomean{p}^2} \fnumy_{\rho}
    - \frac{ \mean{\rho} \mean{v_x} }{ \geomean{p}^2 } \fnumy_{\rho v_x}
    \\&\qquad
    - \frac{ \mean{\rho} \mean{v_y} }{ \geomean{p}^2 } \fnumy_{\rho v_y}
    + \frac{ \mean{\rho} }{ \geomean{p}^2 } \fnumy_{\rho e}
  \Bigg) \jump{p}.
\end{aligned}
\end{equation}
Thus, the fluxes
\begin{equation}
\label{eq:rho-v-p-EC-KEP}
\begin{aligned}
  \fnumx&
  \begin{dcases}
  \fnumx_{\rho}
  =
  (\gamma-1) \left(
    \frac{\gamma}{\logmean{\rho}}
    - \frac{ \mean{\inv p} \geomean{p}^2 }{ \mean{\rho} \logmean{p} }
  \right)^{-1} \mean{v_x},
  \\
  \fnumx_{\rho v_x}
  =
  \mean{v_x} \fnumx_{\rho}
  + \frac{\mean{\rho}}{\mean{\rho / p}},
  \\
  \fnumx_{\rho v_y}
  =
  \mean{v_y} \fnumx_{\rho},
  \\
  \fnumx_{\rho e}
  =
  \left(
    \frac{1}{\gamma-1} \frac{\geomean{p}^2}{\mean{\rho} \logmean{p}}
    + \mean{v_x}^2 + \mean{v_y}^2
    - \frac{ \mean{v_x^2} + \mean{v_y^2} }{2}
  \right) \fnumx_{\rho}
  + \frac{ \mean{\rho} \mean{v} }{ \mean{\rho / p} },
  \end{dcases}
  \\
  \fnumy&
  \begin{dcases}
  \fnumy_{\rho}
  =
  (\gamma-1) \left(
    \frac{\gamma}{\logmean{\rho}}
    - \frac{ \mean{\inv p} \geomean{p}^2 }{ \mean{\rho} \logmean{p} }
  \right)^{-1} \mean{v_y},
  \\
  \fnumy_{\rho v_x}
  =
  \mean{v_x} \fnumy_{\rho},
  \\
  \fnumy_{\rho v_y}
  =
  \mean{v_y} \fnumy_{\rho}
  + \frac{\mean{\rho}}{\mean{\rho / p}},
  \\
  \fnumy_{\rho e}
  =
  \left(
    \frac{1}{\gamma-1} \frac{\geomean{p}^2}{\mean{\rho} \logmean{p}}
    + \mean{v_x}^2 + \mean{v_y}^2
    - \frac{ \mean{v_x^2} + \mean{v_y^2} }{2}
  \right) \fnumy_{\rho}
  + \frac{ \mean{\rho} \mean{v} }{ \mean{\rho / p} },
  \end{dcases}
\end{aligned}
\end{equation}
can be seen to be entropy conservative and consistent.
Additionally, they are kinetic energy preserving with numerical pressure flux
$\pnum = \frac{\mean{\rho}}{\mean{\rho / p}}$.

As before, the pressure influences the numerical density flux, leading to some
problems as explained by \citet{derigs2016novelAveraging}, see also
Remark~\ref{rem:positivity-density} and the numerical tests in section
\ref{sec:numerical-tests}.

\subsection{Using \texorpdfstring{$\rho, v, T$}{ρ,v,T} as Variables}
\label{sec:rho-v-T}

Using the variables $\rho, v$, and $RT = \frac{p}{\rho}$, the flux potentials
and the entropy variables \eqref{eq:w} can be written as
\begin{gather}
\label{eq:psi-w-using-rho-v-T}
  \psi_x = \rho v_x,
  \quad
  \psi_y = \rho v_y,
  \\
  w
  =
  \left(
    \frac{\gamma}{\gamma-1} - \frac{s}{\gamma-1} - \frac{v^2}{2 RT},
    \frac{v_x}{RT},
    \frac{v_y}{RT},
    -\frac{1}{RT}
  \right)^T,
  \quad
  s = \log \frac{p}{\rho^\gamma} = \log RT - (\gamma-1) \log \rho.
\end{gather}

\subsubsection{Variant 1}

One way to write the jumps is
\begin{equation}
\begin{aligned}
  \jump{w_1}
  =&
  - \frac{1}{\gamma-1} \jump{s} - \frac{1}{2} \jump{ \frac{v^2}{RT} }
  =
  - \frac{1}{\gamma-1} \jump{\log RT}
  + \jump{\log \rho}
  - \frac{1}{2} \jump{ \frac{v^2}{RT} }
  \\
  =&
  - \frac{1}{\gamma-1} \frac{1}{\logmean{RT}} \jump{RT}
  + \frac{1}{\logmean{\rho}} \jump{\rho}
  - \mean{\frac{1}{RT}} \mean{v_x} \jump{v_x}
  - \mean{\frac{1}{RT}} \mean{v_y} \jump{v_y}
  \\&
  + \frac{ \mean{v_x^2} + \mean{v_y^2} }{ 2 \geomean{RT}^2 } \jump{RT},
  \\
  \jump{w_2}
  =&
  \jump{ \frac{v_x}{RT} }
  =
  \mean{\frac{1}{RT}} \jump{v_x}
  - \frac{ \mean{v_x} }{ \geomean{RT}^2 } \jump{RT},
  \\
  \jump{w_3}
  =&
  \jump{ \frac{v_y}{RT} }
  =
  \mean{\frac{1}{RT}} \jump{v_y}
  - \frac{ \mean{v_y} }{ \geomean{RT}^2 } \jump{RT},
  \\
  \jump{w_4}
  =&
  - \jump{ \frac{1}{RT} }
  =
  \frac{1}{ \geomean{RT}^2 } \jump{RT},
  \\
  \jump{\psi_x}
  =&
  \jump{\rho v_x}
  =
  \mean{\rho} \jump{v_x}
  + \mean{v_x} \jump{\rho},
  \\
  \jump{\psi_y}
  =&
  \jump{\rho v_y}
  =
  \mean{\rho} \jump{v_y}
  + \mean{v_y} \jump{\rho}.
\end{aligned}
\end{equation}
Therefore, the entropy conservation conditions \eqref{eq:EC} can be written as
\begin{equation}
\begin{aligned}
  0
  =&
  \left(
    \frac{1}{\logmean{\rho}} \fnumx_{\rho}
    - \mean{v_x}
  \right) \jump{\rho}
  + \left(
    - \mean{\frac{1}{RT}} \mean{v_x} \fnumx_{\rho}
    + \mean{\frac{1}{RT}} \fnumx_{\rho v_x}
    - \mean{\rho}
  \right) \jump{v_x}
  \\&
  + \left(
    - \mean{\frac{1}{RT}} \mean{v_y} \fnumx_{\rho}
    + \mean{\frac{1}{RT}} \fnumx_{\rho v_y}
  \right) \jump{v_y}
  \\&
  + \Bigg(
    - \frac{1}{\gamma-1} \frac{1}{\logmean{RT}} \fnumx_{\rho}
    + \frac{ \mean{v_x^2} + \mean{v_y^2} }{ 2 \geomean{RT}^2 } \fnumx_{\rho}
    - \frac{ \mean{v_x} }{ \geomean{RT}^2 } \fnumx_{\rho v_x}
    \\&\qquad
    - \frac{ \mean{v_y} }{ \geomean{RT}^2 } \fnumx_{\rho v_y}
    + \frac{1}{ \geomean{RT}^2 } \fnumx_{\rho e}
  \Bigg) \jump{RT},
  \\
  0
  =&
  \left(
    \frac{1}{\logmean{\rho}} \fnumy_{\rho}
    - \mean{v_y}
  \right) \jump{\rho}
  + \left(
    - \mean{\frac{1}{RT}} \mean{v_x} \fnumy_{\rho}
    + \mean{\frac{1}{RT}} \fnumy_{\rho v_x}
  \right) \jump{v_x}
  \\&
  + \left(
    - \mean{\frac{1}{RT}} \mean{v_y} \fnumy_{\rho}
    + \mean{\frac{1}{RT}} \fnumy_{\rho v_y}
    - \mean{\rho}
  \right) \jump{v_y}
  \\&
  + \Bigg(
    - \frac{1}{\gamma-1} \frac{1}{\logmean{RT}} \fnumy_{\rho}
    + \frac{ \mean{v_x^2} + \mean{v_y^2} }{ 2 \geomean{RT}^2 } \fnumy_{\rho}
    - \frac{ \mean{v_x} }{ \geomean{RT}^2 } \fnumy_{\rho v_x}
    \\&\qquad
    - \frac{ \mean{v_y} }{ \geomean{RT}^2 } \fnumy_{\rho v_y}
    + \frac{1}{ \geomean{RT}^2 } \fnumy_{\rho e}
  \Bigg) \jump{RT}.
\end{aligned}
\end{equation}
Thus, the fluxes
\begin{equation}
\label{eq:rho-v-T-EC-KEP}
\begin{aligned}
  \fnumx&
  \begin{dcases}
  \fnumx_{\rho}
  =
  \logmean{\rho} \mean{v_x},
  \\
  \fnumx_{\rho v_x}
  =
  \mean{v_x} \fnumx_{\rho}
  + \frac{ \mean{\rho} }{ \mean{1/RT} },
  \\
  \fnumx_{\rho v_y}
  =
  \mean{v_y} \fnumx_{\rho},
  \\
  \fnumx_{\rho e}
  =
  \left(
    \frac{1}{\gamma-1} \frac{ \geomean{RT}^2 }{ \logmean{RT} }
    - \frac{ \mean{v_x^2} + \mean{v_y^2} }{2}
  \right) \fnumx_{\rho}
  + \mean{v_x} \fnumx_{\rho v_x}
  + \mean{v_y} \fnumx_{\rho v_y},
  \end{dcases}
  \\
  \fnumy&
  \begin{dcases}
  \fnumy_{\rho}
  =
  \logmean{\rho} \mean{v_y},
  \\
  \fnumy_{\rho v_x}
  =
  \mean{v_x} \fnumy_{\rho},
  \\
  \fnumy_{\rho v_y}
  =
  \mean{v_y} \fnumy_{\rho}
  + \frac{ \mean{\rho} }{ \mean{1/RT} },
  \\
  \fnumy_{\rho e}
  =
  \left(
    \frac{1}{\gamma-1} \frac{ \geomean{RT}^2 }{ \logmean{RT} }
    - \frac{ \mean{v_x^2} + \mean{v_y^2} }{2}
  \right) \fnumy_{\rho}
  + \mean{v_x} \fnumy_{\rho v_x}
  + \mean{v_y} \fnumy_{\rho v_y},
  \end{dcases}
\end{aligned}
\end{equation}
can be seen to be entropy conservative and consistent.
Additionally, they are kinetic energy preserving with numerical pressure flux
$\pnum = \frac{ \mean{\rho} }{ \mean{1/RT} }$, i.e. the same as for the entropy
conservative and kinetic energy preserving flux
\eqref{eq:Chandrashekar-rho-v-beta-EC-KEP} proposed by
\citet{chandrashekar2013kinetic}. Moreover, the density flux $\fnum_{\rho}$ is
the same. However, the energy fluxes $\fnum_{\rho e}$ are different.

\subsubsection{Variant 2}

Similarly to the derivation of \eqref{eq:Chandrashekar-rho-v-beta-EC-notKEP},
choosing another possibility to split the jump 
\begin{equation}
\begin{aligned}
  \jump{ \frac{v_{x/y}^2}{RT} }
  =&
  \mean{\frac{v_{x/y}}{RT}} \jump{v_{x/y}} + \mean{v_{x/y}} \jump{\frac{v_{x/y}}{RT}}
  \\=& 
  \mean{\frac{v_{x/y}}{RT}} \jump{v_{x/y}}
  + \mean{\frac{1}{RT}} \mean{v_{x/y}} \jump{v_{x/y}}
  - \frac{ \mean{v_{x/y}}^2 }{ \geomean{RT}^2 } \jump{RT},
\end{aligned}
\end{equation}
the entropy conservation conditions \eqref{eq:EC} can be written as
\begin{equation}
\begin{aligned}
  0
  =&
  \left(
    \frac{1}{\logmean{\rho}} \fnumx_{\rho}
    - \mean{v_x}
  \right) \jump{\rho}
  \\&
  + \left(
    - \frac{1}{2} \mean{\frac{v_x}{RT}} \fnumx_{\rho}
    - \frac{1}{2} \mean{\frac{1}{RT}} \mean{v_x} \fnumx_{\rho}
    + \mean{\frac{1}{RT}} \fnumx_{\rho v_x}
    - \mean{\rho}
  \right) \jump{v_x}
  \\&
  + \left(
    - \frac{1}{2} \mean{\frac{v_y}{RT}} \fnumx_{\rho}
    - \frac{1}{2} \mean{\frac{1}{RT}} \mean{v_y} \fnumx_{\rho}
    + \mean{\frac{1}{RT}} \fnumx_{\rho v_y}
  \right) \jump{v_y}
  \\&
  + \Bigg(
    - \frac{1}{\gamma-1} \frac{1}{\logmean{RT}} \fnumx_{\rho}
    + \frac{ \mean{v_x}^2 + \mean{v_y^2} }{ 2 \geomean{RT}^2 } \fnumx_{\rho}
    - \frac{ \mean{v_x} }{ \geomean{RT}^2 } \fnumx_{\rho v_x}
    \\&\qquad
    - \frac{ \mean{v_y} }{ \geomean{RT}^2 } \fnumx_{\rho v_y}
    + \frac{1}{ \geomean{RT}^2 } \fnumx_{\rho e}
  \Bigg) \jump{RT},
\end{aligned}
\end{equation}
\begin{equation}
\begin{aligned}
  0
  =&
  \left(
    \frac{1}{\logmean{\rho}} \fnumy_{\rho}
    - \mean{v_y}
  \right) \jump{\rho}
  \\&
  + \left(
    - \frac{1}{2} \mean{\frac{v_x}{RT}} \fnumy_{\rho}
    - \frac{1}{2} \mean{\frac{1}{RT}} \mean{v_x} \fnumy_{\rho}
    + \mean{\frac{1}{RT}} \fnumy_{\rho v_x}
  \right) \jump{v_x}
  \\&
  + \left(
    - \frac{1}{2} \mean{\frac{v_y}{RT}} \fnumy_{\rho}
    - \frac{1}{2} \mean{\frac{1}{RT}} \mean{v_y} \fnumy_{\rho}
    + \mean{\frac{1}{RT}} \fnumy_{\rho v_y}
    - \mean{\rho}
  \right) \jump{v_y}
  \\&
  + \Bigg(
    - \frac{1}{\gamma-1} \frac{1}{\logmean{RT}} \fnumy_{\rho}
    + \frac{ \mean{v_x}^2 + \mean{v_y^2} }{ 2 \geomean{RT}^2 } \fnumy_{\rho}
    - \frac{ \mean{v_x} }{ \geomean{RT}^2 } \fnumy_{\rho v_x}
    \\&\qquad
    - \frac{ \mean{v_y} }{ \geomean{RT}^2 } \fnumy_{\rho v_y}
    + \frac{1}{ \geomean{RT}^2 } \fnumy_{\rho e}
  \Bigg) \jump{RT}.
\end{aligned}
\end{equation}
Thus, the fluxes
\begin{equation}
\label{eq:rho-v-T-EC-notKEP}
\begin{aligned}
  \fnumx&
  \begin{dcases}
  \fnumx_{\rho}
  =
  \logmean{\rho} \mean{v_x},
  \\
  \fnumx_{\rho v_x}
  =
  \frac{ \mean{v_x/RT} + \mean{1/RT} \mean{v_x} }{ 2 \mean{1/RT} } \fnumx_{\rho}
  + \frac{\mean{\rho}}{\mean{1/RT}},
  \\
  \fnumx_{\rho v_y}
  =
  \frac{ \mean{v_y/RT} + \mean{1/RT} \mean{v_y} }{ 2 \mean{1/RT} } \fnumx_{\rho},
  \\
  \fnumx_{\rho e}
  =
  \left(
    \frac{1}{\gamma-1} \frac{ \geomean{RT}^2 }{ \logmean{RT} }
    - \frac{ \mean{v_x}^2 + \mean{v_y}^2 }{2}
  \right) \fnumx_{\rho}
  + \mean{v_x} \fnumx_{\rho v_x}
  + \mean{v_y} \fnumx_{\rho v_y},
  \end{dcases}
  \\
  \fnumy&
  \begin{dcases}
  \fnumy_{\rho}
  =
  \logmean{\rho} \mean{v_y},
  \\
  \fnumy_{\rho v_x}
  =
  \frac{ \mean{v_x/RT} + \mean{1/RT} \mean{v_x} }{ 2 \mean{1/RT} } \fnumy_{\rho},
  \\
  \fnumy_{\rho v_y}
  =
  \frac{ \mean{v_y/RT} + \mean{1/RT} \mean{v_y} }{ 2 \mean{1/RT} } \fnumy_{\rho}
  + \frac{\mean{\rho}}{\mean{1/RT}},
  \\
  \fnumy_{\rho e}
  =
  \left(
    \frac{1}{\gamma-1} \frac{ \geomean{RT}^2 }{ \logmean{RT} }
    - \frac{ \mean{v_x}^2 + \mean{v_y}^2 }{2}
  \right) \fnumy_{\rho}
  + \mean{v_x} \fnumy_{\rho v_x}
  + \mean{v_y} \fnumy_{\rho v_y},
  \end{dcases}
\end{aligned}
\end{equation}
can be seen to be entropy conservative. Again, since the property ``kinetic energy
preserving'' is not well-defined, they could possibly be considered as kinetic
energy preserving, cf. Remark~\ref{rem:fnum-KEP}.

\subsection{Using \texorpdfstring{$\rho, v, \inv g(\rho/p)$}{ρ,v,g\^-1(ρ/p)} as Variables}
\label{sec:rho-v-invg-rho/p}

As can be seen in the previous subsections, there are many entropy conservative
and kinetic energy preserving numerical fluxes in the sense described at
the beginning of section \ref{sec:fluxes}, obtained using the general
procedure described there. However, they are different and will thus have
advantages or disadvantages compared to each other.
Looking at the entropy variables \eqref{eq:w}
\begin{equation}
  w
  =
  \left(
    \frac{\gamma}{\gamma-1} - \frac{s}{\gamma-1} - \frac{\rho v^2}{2 p},
    \frac{\rho v_x}{p},
    \frac{\rho v_y}{p},
    -\frac{\rho}{p}
  \right)^T,
  \quad
  s = \log \frac{p}{\rho^\gamma},
\end{equation}
it can be seen that the term $\frac{\rho}{p}$ has a crucial role. If the variables
$\rho, v, \chi$ are chosen (where $\chi$ is some third variable), and the
expression $\frac{\rho}{p} = g(\chi)$ depends only on this third variable $\chi$,
a kinetic energy preserving flux can be constructed using a density flux depending
only on $\rho, v$. Indeed, writing the jumps as
\begin{equation}
\begin{aligned}
  \jump{w_1}
  =&
  - \frac{1}{\gamma-1} \jump{s} - \frac{1}{2} \jump{ \frac{\rho}{p} v^2 }
  =
  \jump{ \log \rho }
  + \frac{1}{\gamma-1} \jump{ \log \frac{\rho}{p} }
  - \frac{1}{2} \jump{ \frac{\rho}{p} v^2 }
  \\
  =&
  \frac{1}{\logmean{\rho}} \jump{\rho}
  + \frac{1}{\gamma-1} \jump{ \log \frac{\rho}{p} }
  - \frac{ \mean{v_x^2} + \mean{v_y^2} }{2} \jump{\frac{\rho}{p}}
  - \mean{\frac{\rho}{p}} \mean{v_x} \jump{v_x}
  - \mean{\frac{\rho}{p}} \mean{v_y} \jump{v_y},
  \\
  \jump{w_2}
  =&
  \jump{\frac{\rho}{p} v_x}
  =
  \mean{\frac{\rho}{p}} \jump{v_x}
  + \mean{v_x} \jump{\frac{\rho}{p}},
  \\
  \jump{w_3}
  =&
  \jump{\frac{\rho}{p} v_y}
  =
  \mean{\frac{\rho}{p}} \jump{v_y}
  + \mean{v_y} \jump{\frac{\rho}{p}},
  \\
  \jump{w_4}
  =&
  - \jump{\frac{\rho}{p}},
  \\
  \jump{\psi_x}
  =&
  \jump{\rho v_x}
  =
  \mean{\rho} \jump{v_x}
  + \mean{v_x} \jump{\rho},
  \\
  \jump{\psi_y}
  =&
  \jump{\rho v_y}
  =
  \mean{\rho} \jump{v_y}
  + \mean{v_y} \jump{\rho},
\end{aligned}
\end{equation}
the entropy conservation conditions $\jump{w} \cdot f^{\mathrm{num},{x/y}}
- \jump{\psi_{x/y}} = 0$ \eqref{eq:EC} become
\begin{equation}
\begin{aligned}
  0
  =&
  \left(
    \frac{1}{\logmean{\rho}} \fnumx_{\rho}
    - \mean{v_x}
  \right) \jump{\rho}
  + \left(
    - \mean{\frac{\rho}{p}} \mean{v_x} \fnumx_{\rho}
    + \mean{\frac{\rho}{p}} \fnumx_{\rho v_x}
    - \mean{\rho}
  \right) \jump{v_x}
  \\&
  + \left(
    - \mean{\frac{\rho}{p}} \mean{v_y} \fnumx_{\rho}
    + \mean{\frac{\rho}{p}} \fnumx_{\rho v_y}
  \right) \jump{v_y}
  \\&
  + \Bigg(
    \frac{1}{\gamma-1} \jump{ \log \frac{\rho}{p} } \fnumx_{\rho}
    - \frac{ \mean{v_x^2} + \mean{v_y^2} }{2} \jump{\frac{\rho}{p}} \fnumx_{\rho}
    + \mean{v_x} \jump{\frac{\rho}{p}} \fnumx_{\rho v_x}
    \\&\qquad
    + \mean{v_y} \jump{\frac{\rho}{p}} \fnumx_{\rho v_y}
    - \jump{\frac{\rho}{p}} \fnumx_{\rho e}
  \Bigg),
  \\
  0
  =&
  \left(
    \frac{1}{\logmean{\rho}} \fnumy_{\rho}
    - \mean{v_y}
  \right) \jump{\rho}
  + \left(
    - \mean{\frac{\rho}{p}} \mean{v_x} \fnumy_{\rho}
    + \mean{\frac{\rho}{p}} \fnumy_{\rho v_x}
  \right) \jump{v_x}
  \\&
  + \left(
    - \mean{\frac{\rho}{p}} \mean{v_y} \fnumy_{\rho}
    + \mean{\frac{\rho}{p}} \fnumy_{\rho v_y}
    - \mean{\rho}
  \right) \jump{v_y}
  \\&
  + \Bigg(
    \frac{1}{\gamma-1} \jump{ \log \frac{\rho}{p} } \fnumy_{\rho}
    - \frac{ \mean{v_x^2} + \mean{v_y^2} }{2} \jump{\frac{\rho}{p}} \fnumy_{\rho}
    + \mean{v_x} \jump{\frac{\rho}{p}} \fnumy_{\rho v_x}
    \\&\qquad
    + \mean{v_y} \jump{\frac{\rho}{p}} \fnumy_{\rho v_y}
    - \jump{\frac{\rho}{p}} \fnumy_{\rho e}
  \Bigg).
\end{aligned}
\end{equation}
Thus, the density and momentum fluxes
\begin{equation}
\begin{aligned}
  \fnumx&
  \begin{dcases}
  \fnumx_{\rho}
  =
  \logmean{\rho} \mean{v_x},
  \\
  \fnumx_{\rho v_x}
  =
  \mean{v_x} \fnumx_{\rho}
  + \frac{\mean{\rho}}{\mean{\rho/p}},
  \\
  \fnumx_{\rho v_y}
  =
  \mean{v_y} \fnumx_{\rho},
  \end{dcases}
  \\
  \fnumy&
  \begin{dcases}
  \fnumy_{\rho}
  =
  \logmean{\rho} \mean{v_y},
  \\
  \fnumy_{\rho v_y}
  =
  \mean{v_y} \fnumx_{\rho},
  \\
  \fnumy_{\rho v_y}
  =
  \mean{v_y} \fnumy_{\rho}
  + \frac{\mean{\rho}}{\mean{\rho/p}},
  \end{dcases}
\end{aligned}
\end{equation}
set the two terms to zero. These fluxes are the same as in
\eqref{eq:Chandrashekar-rho-v-beta-EC-KEP} and \eqref{eq:rho-v-T-EC-KEP}, i.e.
the same as the ones used by \citet{chandrashekar2013kinetic}.
However, depending on the expression of $\jump{ \log \frac{\rho}{p} }$, different
energy fluxes can be constructed, resulting in entropy conservative and
kinetic energy preserving schemes.

Choosing $\chi = \beta \propto \frac{\rho}{p}$, \citet{chandrashekar2013kinetic}
set $\jump{ \log \frac{\rho}{p} } = \frac{1}{\logmean{\chi}} \jump{\chi}$ and
derived his EC and KEP flux \eqref{eq:Chandrashekar-rho-v-beta-EC-KEP}.
Choosing $\chi = RT = \left( \frac{\rho}{p} \right)^{-1}$ and setting
$\jump{ \log \frac{\rho}{p} } = - \frac{1}{\logmean{\chi}} \jump{\chi}$,
the flux \eqref{eq:rho-v-T-EC-KEP} has been derived in section \ref{sec:rho-v-T}.

\subsubsection{Variant 1}
\label{sec:rho-v-invg-rho/p-variant1}

More generally, choosing $r \in \R \setminus \set{0}$, and setting
$\frac{\rho}{p} = \chi^r$, the jumps become
\begin{equation}
  \jump{ \log \frac{\rho}{p} }
  =
  \jump{ \log \chi^r }
  = r \jump{ \log \chi }
  = \frac{r}{\logmean{\chi}} \jump{\chi},
  \quad
  \jump{\frac{\rho}{p}}
  =
  \jump{\chi^r}
  =
  \frac{\chi_+^r - \chi_-^r}{\chi_+ - \chi_-} \jump{\chi},
\end{equation}
where the mean value
\begin{equation}
\label{eq:r-mean}
  \mean{\chi}_{x \mapsto x^r}
  :=
  \left( \frac{1}{r} \frac{\chi_+^r - \chi_-^r}{\chi_+ - \chi_-} \right)^{1/(r-1)}
\end{equation}
can be introduced to yield
\begin{equation}
\label{eq:r-mean-rule}
  \jump{\chi^r}
  =
  r \mean{\chi}_{x \mapsto x^r}^{r-1} \jump{\chi}.
\end{equation}
Thus, the arithmetic mean \eqref{eq:arithmetic-mean} becomes
$\mean{a} = \mean{a}_{x \mapsto x^2}$ and the geometric mean \eqref{eq:geometric-mean}
becomes $\geomean{a} = \mean{a}_{x \mapsto 1/x}$.

Using this mean value, entropy conservative, kinetic energy preserving, and
consistent numerical fluxes are
\begin{equation}
\label{eq:rho-v-rho/p^(1/r)-EC-KEP}
\begin{aligned}
  \fnumx&
  \begin{dcases}
  \fnumx_{\rho}
  =
  \logmean{\rho} \mean{v_x},
  \\
  \fnumx_{\rho v_x}
  =
  \mean{v_x} \fnumx_{\rho}
  + \frac{\mean{\rho}}{\mean{\rho/p}},
  \\
  \fnumx_{\rho v_y}
  =
  \mean{v_y} \fnumx_{\rho},
  \\
  \fnumx_{\rho e}
  =
  \left(
    \frac{1}{\gamma-1} \frac{1}{ \mean{\chi}_{x \mapsto x^r}^{r-1} \logmean{\chi} }
    - \frac{ \mean{v_x^2} + \mean{v_y^2} }{2}
  \right) \fnumx_{\rho}
  + \mean{v_x} \fnumx_{\rho v_x}
  + \mean{v_y} \fnumx_{\rho v_y},
  \end{dcases}
  \\
  \fnumy&
  \begin{dcases}
  \fnumy_{\rho}
  =
  \logmean{\rho} \mean{v_y},
  \\
  \fnumy_{\rho v_y}
  =
  \mean{v_y} \fnumx_{\rho},
  \\
  \fnumy_{\rho v_y}
  =
  \mean{v_y} \fnumy_{\rho}
  + \frac{\mean{\rho}}{\mean{\rho/p}},
  \\
  \fnumy_{\rho e}
  =
  \left(
    \frac{1}{\gamma-1} \frac{1}{ \mean{\chi}_{x \mapsto x^r}^{r-1} \logmean{\chi} }
    - \frac{ \mean{v_x^2} + \mean{v_y^2} }{2}
  \right) \fnumy_{\rho}
  + \mean{v_x} \fnumy_{\rho v_x}
  + \mean{v_y} \fnumy_{\rho v_y}.
  \end{dcases}
\end{aligned}
\end{equation}
Of course, some numerically stable procedure to compute
$\mean{\chi}_{x \mapsto x^r}^{r-1}
= \frac{1}{r} \frac{\chi_+^r - \chi_-^r}{\chi_+ - \chi_-}$
has to be derived.

\subsubsection{Variant 2}
\label{sec:rho-v-invg-rho/p-variant2}

The choice $\frac{\rho}{p} = \exp \chi$ results in
\begin{equation}
  \jump{ \log \frac{\rho}{p} }
  =
  \jump{ \log \exp \chi }
  = \jump{ \chi },
  \quad
  \jump{\frac{\rho}{p}}
  =
  \jump{ \exp \chi}
  =
  \frac{\exp \chi_+ - \exp \chi_-}{\chi_+ - \chi_-} \jump{\chi},
\end{equation}
where the mean value
\begin{equation}
\label{eq:exp-mean}
  \mean{\chi}_{x \mapsto \exp x}
  :=
  \log \frac{\exp \chi_+ - \exp \chi_-}{\chi_+ - \chi_-}
\end{equation}
can be introduced to yield
\begin{equation}
\label{eq:exp-mean-rule}
  \jump{\exp \chi}
  =
  \exp \left(\! \mean{\chi}_{x \mapsto \exp x} \right) \jump{\chi}.
\end{equation}

Using this mean value, an entropy conservative, kinetic energy preserving, and
consistent numerical flux is
\begin{equation}
\label{eq:rho-v-log-rho/p-EC-KEP}
\begin{aligned}
  \fnumx&
  \begin{dcases}
  \fnumx_{\rho}
  =
  \logmean{\rho} \mean{v_x},
  \\
  \fnumx_{\rho v_x}
  =
  \mean{v_x} \fnumx_{\rho}
  + \frac{\mean{\rho}}{\mean{\rho/p}},
  \\
  \fnumx_{\rho v_y}
  =
  \mean{v_y} \fnumx_{\rho},
  \\
  \fnumx_{\rho e}
  =
  \left(
    \frac{1}{\gamma-1} \frac{1}{ \exp\! \mean{\chi}_{x \mapsto \exp x} }
    - \frac{ \mean{v_x^2} + \mean{v_y^2} }{2}
  \right) \fnumx_{\rho}
  + \mean{v_x} \fnumx_{\rho v_x}
  + \mean{v_y} \fnumx_{\rho v_y},
  \end{dcases}
  \\
  \fnumy&
  \begin{dcases}
  \fnumy_{\rho}
  =
  \logmean{\rho} \mean{v_y},
  \\
  \fnumy_{\rho v_y}
  =
  \mean{v_y} \fnumx_{\rho},
  \\
  \fnumy_{\rho v_y}
  =
  \mean{v_y} \fnumy_{\rho}
  + \frac{\mean{\rho}}{\mean{\rho/p}},
  \\
  \fnumy_{\rho e}
  =
  \left(
    \frac{1}{\gamma-1} \frac{1}{ \exp\! \mean{\chi}_{x \mapsto \exp x} }
    - \frac{ \mean{v_x^2} + \mean{v_y^2} }{2}
  \right) \fnumy_{\rho}
  + \mean{v_x} \fnumy_{\rho v_x}
  + \mean{v_y} \fnumy_{\rho v_y}.
  \end{dcases}
\end{aligned}
\end{equation}
Again, some numerically stable procedure to compute
$\exp\! \mean{\chi}_{x \mapsto \exp x}
= \frac{\exp \chi_+ - \exp \chi_-}{\chi_+ - \chi_-}$
has to be derived.

\subsection{Using Other Variables}

Of course, some other sets of variables can be used to derive entropy conservative
numerical fluxes similar to the previous sections. However, since there is no
clear intuition which choice of variables might be ``good'', this is not 
carried out in detail here.
As noted by \citet{derigs2016novelAveraging}, an influence of the pressure in
the numerical density flux should be avoided, see also Remark~\ref{rem:positivity-density}
and the numerical tests in section \ref{sec:numerical-tests}.

\section{Reversing the Role of Energy and Entropy}
\label{sec:reversed-fluxes}

As proposed in \cite{coquel2001some} and used in \cite[Section 2.4.6]{bouchut2004nonlinear}
to derive an approximate Riemann solver based on the Suliciu relaxation approach,
the role of energy and entropy for the Euler equations can be reversed, i.e.
a conservation law for the entropy and an inequality for the energy can be considered,
cf. \cite{derigs2016novelSolver}. Then, the system reads
\begin{equation}
\label{eq:Euler-reversed}
\begin{aligned}
  \partial_t
  \underbrace{
  \begin{pmatrix}
    \rho
    \\
    \rho v_x
    \\
    \rho v_y
    \\
    \rho s
  \end{pmatrix}
  }_{= u}
  + \,\partial_x
  \underbrace{
  \begin{pmatrix}
    \rho v_x
    \\
    \rho v_x^2 + p
    \\
    \rho v_x v_y
    \\
    \rho s v_x
  \end{pmatrix}
  }_{= f_x(u)}
  + \,\partial_y
  \underbrace{
  \begin{pmatrix}
    \rho v_y
    \\
    \rho v_x v_y
    \\
    \rho v_y^2 + p
    \\
    \rho s v_y
  \end{pmatrix}
  }_{= f_y(u)}
  =
  0,
\end{aligned}
\end{equation}
and the 'entropy' condition becomes
\begin{equation}
\label{eq:Euler-reversed-entropy}
  \partial_t \underbrace{\left( \rho e \right)}_{=U}
  + \partial_x \underbrace{\left( (\rho e + p) v_x \right)}_{=F_x}
  + \partial_y \underbrace{\left( (\rho e + p) v_y \right)}_{=F_y}
  \leq 0.
\end{equation}
Since smooth solutions satisfy \eqref{eq:Euler-reversed-entropy} with equality,
they are also smooth solutions of the Euler equations \eqref{eq:Euler} with
equality in the usual entropy condition \eqref{eq:Euler-entropy}.
In the same spirit, an 'entropy' conserving numerical flux for \eqref{eq:Euler-reversed},
\eqref{eq:Euler-reversed-entropy} is an entropy conserving flux for \eqref{eq:Euler},
\eqref{eq:Euler-entropy} and vice versa.

\begin{remark}
\label{rem:reversing-entropy-energy}
  This reversion of the energy and the entropy is very specific to the Euler
  equations. Exchanging some conserved quantity with the entropy will in general
  not result in a convex ``entropy'' with such nice properties as used here.
\end{remark}

In order to express the energy $\rho e = \frac{1}{2} \rho v^2 + \rho \epsilon$
as a function of the new conserved variables $\rho, \rho v, \rho s$,
the pressure $p = (\gamma-1) \rho \epsilon$ can be inserted into the
specific entropy $s = \log \frac{p}{\rho^\gamma}$ to yield
\begin{equation}
  \rho \epsilon = \frac{\rho^\gamma}{\gamma-1} \exp s.
\end{equation}
Thus, the energy can be written as
\begin{equation}
  \rho e
  =
  \frac{1}{2} \frac{ (\rho v)^2 }{ \rho }
  + \frac{1}{\gamma-1} \rho^\gamma \exp\left( \frac{\rho s}{\rho} \right).
\end{equation}
Therefore, the new 'entropy' variables are
\begin{equation}
\label{eq:Euler-reversed-w}
\begin{aligned}
  w
  =
  \frac{\partial(\rho e)}{\partial(\rho, \rho v_x, \rho v_y, \rho s)}
  =&
  \begin{pmatrix}
    - \frac{1}{2} \frac{ (\rho v)^2 }{ \rho^2 }
    + \frac{\gamma}{\gamma-1} \rho^{\gamma-1} \exp\left( \frac{\rho s}{\rho} \right)
    - \frac{1}{\gamma-1} \rho^\gamma \frac{\rho s}{\rho^2} \exp\left( \frac{\rho s}{\rho} \right)
    \\
    \frac{ \rho v_x }{ \rho }
    \\
    \frac{ \rho v_y }{ \rho }
    \\
    \frac{1}{\gamma-1} \rho^{\gamma-1} \exp\left( \frac{\rho s}{\rho} \right)
  \end{pmatrix}
  \\
  =&
  \begin{pmatrix}
    - \frac{1}{2} v^2
    + \frac{\gamma}{\gamma-1} \rho^{\gamma-1} \exp(s)
    - \frac{1}{\gamma-1} \rho^{\gamma-1} s \exp(s)
    \\
    v_x
    \\
    v_y
    \\
    \frac{1}{\gamma-1} \rho^{\gamma-1} \exp(s)
  \end{pmatrix}.
\end{aligned}
\end{equation}
The new 'entropy' fluxes are
\begin{equation}
\begin{aligned}
  F_x
  =&
  (\rho e + p) v_x
  =
  \frac{1}{2} \rho v^2 v_x + \gamma \rho \epsilon v_x
  =
  \frac{1}{2} \rho v^2 v_x + \frac{\gamma}{\gamma-1} \rho^\gamma v_x \exp(s),
  \\
  F_y
  =&
  (\rho e + p) v_y
  =
  \frac{1}{2} \rho v^2 v_y + \gamma \rho \epsilon v_y
  =
  \frac{1}{2} \rho v^2 v_y + \frac{\gamma}{\gamma-1} \rho^\gamma v_y \exp(s),
\end{aligned}
\end{equation}
so that the new flux potentials $\psi_{x/y}$ fulfilling
$\partial_w \psi_{x/y} = f_{x/y}\left( u(w) \right)$ become
\begin{equation}
\label{eq:Euler-reversed-psi}
\begin{aligned}
  \psi_{x/y}
  =
  w \cdot f_{x/y} - F_{x/y}
  =&
  \left( - \frac{1}{2} v^2 + \frac{\gamma}{\gamma-1} \rho^{\gamma-1} \exp(s)
         - \frac{1}{\gamma-1} \rho^{\gamma-1} s \exp(s) \right) \rho v_{x/y}
  \\&
  + v_{x/y} \left( \rho v_{x/y}^2 + \rho^\gamma \exp(s) \right)
  + v_{y/x} \rho v_{x/y} v_{y/x}
  \\&
  + \frac{1}{\gamma-1} \rho^{\gamma-1} \exp(s) \rho s v_{x/y}
  - \frac{1}{2} \rho v^2 v_{x/y}
  - \frac{\gamma}{\gamma-1} \rho^\gamma v_{x/y} \exp(s)
  \\
  =&
  \rho^\gamma v_{x/y} \exp(s).
\end{aligned}
\end{equation}

As before, the conditions for 'entropy' conservation in the semidiscrete setting
of \citet{tadmor1987numerical, tadmor2003entropy} are
$\jump{w} \cdot \fnumxy - \jump{\psi_{x/y}} = 0$ \eqref{eq:EC}.
The corresponding 'entropy' fluxes are
$\Fnum_{x/y} = \mean{w} \cdot \fnumxy - \mean{\psi_{x/y}}$,
i.e. the numerical energy fluxes corresponding to an 'entropy' conservative flux
$\fnumxy$ for \eqref{eq:Euler-reversed} are
\begin{equation}
  \fnumxy_{\rho e}
  =
  \mean{w} \cdot \fnumxy - \mean{\psi_{x/y}},
\end{equation}
with $w$ as in \eqref{eq:Euler-reversed-w} and $\psi_{x/y}$ as in \eqref{eq:Euler-reversed-psi}.

Choosing $v$ as variable and writing the jumps using the product rule
\eqref{eq:arithmetic-mean-rule} as
\begin{equation}
\begin{aligned}
  \jump{w_1}
  =&
  - \mean{v_x} \jump{v_x}
  - \mean{v_y} \jump{v_y}
  + \frac{\gamma}{\gamma-1} \jump{ \rho^{\gamma-1} \exp(s) }
  - \frac{1}{\gamma-1} \jump{ \rho^{\gamma-1} s \exp(s) },
  \\
  \jump{w_2}
  =&
  \jump{v_x},
  \\
  \jump{w_3}
  =&
  \jump{v_y},
  \\
  \jump{w_4}
  =&
  \frac{1}{\gamma-1} \jump{ \rho^{\gamma-1} \exp(s) },
  \\
  \jump{\psi_x}
  =&
  \mean{\rho^\gamma \exp(s)} \jump{v_x} + \mean{v_x} \jump{\rho^\gamma \exp(s)},
  \\
  \jump{\psi_y}
  =&
  \mean{\rho^\gamma \exp(s)} \jump{v_y} + \mean{v_y} \jump{\rho^\gamma \exp(s)},
\end{aligned}
\end{equation}
the coefficients of $\jump{v_{x/y}}$ in the entropy conditions
$\jump{w} \cdot \fnumxy - \jump{\psi_{x/y}} = 0$ become
\begin{equation}
  - \mean{v_x} \fnumx_{\rho} + \fnumx_{\rho v_x} - \mean{\rho^\gamma \exp(s)},
  \qquad
  - \mean{v_y} \fnumy_{\rho} + \fnumy_{\rho v_y} - \mean{\rho^\gamma \exp(s)}.
\end{equation}
Thus, general momentum fluxes for entropy conservative numerical fluxes can be
written as
\begin{equation}
\begin{aligned}
  \fnumx&
  \begin{dcases}
  \fnumx_{\rho v_x}
  =
  \mean{v_x} \fnumx_{\rho} + \mean{\rho^\gamma \exp(s)}
  =
  \mean{v_x} \fnumx_{\rho} + \mean{p},
  \\
  \fnumx_{\rho v_y}
  =
  \mean{v_y} \fnumx_{\rho},
  \end{dcases}
  \\
  \fnumy&
  \begin{dcases}
  \fnumy_{\rho v_x}
  =
  \mean{v_x} \fnumy_{\rho},
  \\
  \fnumy_{\rho v_y}
  =
  \mean{v_y} \fnumy_{\rho} + \mean{\rho^\gamma \exp(s)}
  =
  \mean{v_y} \fnumy_{\rho} + \mean{p},
  \end{dcases}
\end{aligned}
\end{equation}
i.e. in the form proposed by \citet{jameson2008formulation} for a kinetic
energy preserving flux (in one space dimension).
Another possibility would be to split the jump of $\psi_{x/y}$ in some other way,
resulting in a numerical pressure flux $\pnum$ different from $\mean{p}$.

\subsection{Using \texorpdfstring{$\rho, v, T$}{ρ,v,T} as Variables}
\label{sec:reversed-rho-v-T}

Using the variables $\rho, v$, and $RT = \frac{p}{\rho} = \rho^{\gamma-1} \exp(s)$,
the flux potentials $\psi_{x/y} = \rho^\gamma v_{x/y} \exp(s)$ \eqref{eq:Euler-reversed-psi}
and the entropy variables \eqref{eq:Euler-reversed-w} can be written as
\begin{equation}
\label{eq:reversed-psi-w-using-rho-v-T}
  \psi_{x/y} = \rho RT v_{x/y},
  \quad
  w
  =
  \begin{pmatrix}
    - \frac{1}{2} v^2
    + \frac{\gamma}{\gamma-1} RT
    - \frac{1}{\gamma-1} s RT
    \\
    v_x
    \\
    v_y
    \\
    \frac{1}{\gamma-1} RT
  \end{pmatrix},
  \quad
  s 
  = \log RT - (\gamma-1) \log \rho.
\end{equation}

\subsubsection{Variant 1}

Using these variables, the jumps can be written using the chain rules
\eqref{eq:arithmetic-mean-rule} and \eqref{eq:logarithmic-mean-rule} as
\begin{equation}
\begin{aligned}
  \jump{w_1}
  =&
  - \frac{1}{2} \jump{v^2}
  + \frac{\gamma}{\gamma-1} \jump{ RT }
  - \frac{1}{\gamma-1} \jump{ RT \log RT }
  + \jump{RT \log \rho}
  \\
  =&
  - \mean{v_x} \jump{v_x}
  - \mean{v_y} \jump{v_y}
  + \frac{\gamma}{\gamma-1} \jump{ RT }
  - \frac{1}{\gamma-1} \frac{ \mean{RT} }{ \logmean{RT} } \jump{ RT }
  \\&
  - \frac{1}{\gamma-1} \mean{\log RT} \jump{ RT }
  + \mean{\log \rho} \jump{RT}
  + \frac{ \mean{RT} }{ \logmean{\rho} } \jump{\rho},
  \\
  \jump{w_2}
  =&
  \jump{v_x},
  \\
  \jump{w_3}
  =&
  \jump{v_y},
  \\
  \jump{w_4}
  =&
  \frac{1}{\gamma-1} \jump{ RT },
  \\
  \jump{\psi_x}
  =&
  \mean{\rho RT} \jump{v_x}
  + \mean{v_x} \jump{\rho RT}
  =
  \mean{\rho RT} \jump{v_x}
  + \mean{\rho} \mean{v_x} \jump{RT}
  + \mean{v_x} \mean{RT} \jump{\rho},
  \\
  \jump{\psi_y}
  =&
  \mean{\rho RT} \jump{v_y}
  + \mean{v_y} \jump{\rho RT}
  =
  \mean{\rho RT} \jump{v_y}
  + \mean{\rho} \mean{v_y} \jump{RT}
  + \mean{v_y} \mean{RT} \jump{\rho}.
\end{aligned}
\end{equation}
Inserting this in the entropy conditions $\jump{w} \cdot \fnumxy - \jump{\psi_{x/y}} = 0$,
\begin{equation}
\begin{aligned}
  0
  =&
  \left(
    \frac{ \mean{RT} }{ \logmean{\rho} } \fnumx_{\rho}
    - \mean{v_x} \mean{RT}
  \right) \jump{\rho}
  \\&
  + \left(
    - \mean{v_x} \fnumx_{\rho}
    + \fnumx_{\rho v_x}
    - \mean{\rho RT}
  \right) \jump{v_x}
  + \left(
    - \mean{v_y} \fnumx_{\rho}
    + \fnumx_{\rho v_y}
  \right) \jump{v_y}
  \\&
  + \Bigg(
    \left(
      \frac{\gamma}{\gamma-1}
      - \frac{1}{\gamma-1} \frac{ \mean{RT} }{ \logmean{RT} }
      - \frac{1}{\gamma-1} \mean{\log RT}
      + \mean{\log \rho}
    \right) \fnumx_{\rho}
    \\&\qquad\qquad
    + \frac{1}{\gamma-1} \fnumx_{\rho s}
    - \mean{\rho} \mean{v_x}
  \Bigg) \jump{RT},
  \\
  0
  =&
  \left(
    \frac{ \mean{RT} }{ \logmean{\rho} } \fnumy_{\rho}
    - \mean{v_y} \mean{RT}
  \right) \jump{\rho}
  \\&
  + \left(
    - \mean{v_x} \fnumy_{\rho}
    + \fnumy_{\rho v_x}
  \right) \jump{v_x}
  + \left(
    - \mean{v_y} \fnumy_{\rho}
    + \fnumy_{\rho v_y}
    - \mean{\rho RT}
  \right) \jump{v_y}
  \\&
  + \Bigg(
    \left(
      \frac{\gamma}{\gamma-1}
      - \frac{1}{\gamma-1} \frac{ \mean{RT} }{ \logmean{RT} }
      - \frac{1}{\gamma-1} \mean{\log RT}
      + \mean{\log \rho}
    \right) \fnumy_{\rho}
    \\&\qquad\qquad
    + \frac{1}{\gamma-1} \fnumy_{\rho s}
    - \mean{\rho} \mean{v_y}
  \Bigg) \jump{RT},
\end{aligned}
\end{equation}
the fluxes
\begin{equation}
\label{eq:reversed-rho-v-T-EC-KEP}
\begin{aligned}
  \fnumx&
  \begin{dcases}
  \fnumx_{\rho}
  =&
  \logmean{\rho} \mean{v_x},
  \\
  \fnumx_{\rho v_x}
  =&
  \mean{v_x} \fnumx_{\rho} + \mean{\rho RT}
  =
  \mean{v_x} \fnumx_{\rho} + \mean{p},
  \\
  \fnumx_{\rho v_y}
  =&
  \mean{v_y} \fnumx_{\rho},
  \\
  \fnumx_{\rho s}
  =&
  \left(
    \frac{ \mean{RT} }{ \logmean{RT} }
    - \gamma
    + \mean{\log RT}
    - (\gamma-1) \mean{\log \rho}
  \right) \fnumx_{\rho}
  + (\gamma-1) \mean{\rho} \mean{v_x},
  \\
  \fnumx_{\rho e}
  =&
  \left(
    - \frac{ \mean{v_x^2} + \mean{v_y^2} }{2}
    + \frac{\gamma}{\gamma-1} \mean{ RT }
    - \frac{1}{\gamma-1} \mean{ s RT }
  \right) \fnumx_{\rho}
  \\&
  + \mean{ v_x } \fnumx_{\rho v_x}
  + \mean{ v_y } \fnumx_{\rho v_y}
  + \frac{1}{\gamma-1} \mean{ RT } \fnumx_{\rho s}
  - \mean{\rho RT v_x},
  \end{dcases}
  \\
  \fnumy&
  \begin{dcases}
  \fnumy_{\rho}
  =&
  \logmean{\rho} \mean{v_y},
  \\
  \fnumy_{\rho v_x}
  =&
  \mean{v_x} \fnumy_{\rho},
  \\
  \fnumy_{\rho v_y}
  =&
  \mean{v_y} \fnumy_{\rho} + \mean{\rho RT}
  =
  \mean{v_y} \fnumy_{\rho} + \mean{p},
  \\
  \fnumy_{\rho s}
  =&
  \left(
    \frac{ \mean{RT} }{ \logmean{RT} }
    - \gamma
    + \mean{\log RT}
    - (\gamma-1) \mean{\log \rho}
  \right) \fnumy_{\rho}
  + (\gamma-1) \mean{\rho} \mean{v_y},
  \\
  \fnumy_{\rho e}
  =&
  \left(
    - \frac{ \mean{v_x^2} + \mean{v_y^2} }{2}
    + \frac{\gamma}{\gamma-1} \mean{ RT }
    - \frac{1}{\gamma-1} \mean{ s RT }
  \right) \fnumy_{\rho}
  \\&
  + \mean{ v_x } \fnumy_{\rho v_x}
  + \mean{ v_y } \fnumy_{\rho v_y}
  + \frac{1}{\gamma-1} \mean{ RT } \fnumy_{\rho s}
  - \mean{\rho RT v_y},
  \end{dcases}
\end{aligned}
\end{equation}
can be seen to be consistent, entropy conservative, and kinetic energy preserving
fluxes for the Euler equations \eqref{eq:Euler}, where the energy fluxes $\fnumxy_{\rho e}$
have been computed as $\mean{w} \cdot \fnumxy - \mean{\psi_{x/y}}$.

Inserting the numerical fluxes into the definition of the energy fluxes
$\fnumx_{\rho e},\fnumy_{\rho e}$ yields
\begin{equation}
\begin{aligned}
  \fnumx_{\rho e}
  =&
  \left(
    - \frac{ \mean{v_x^2} + \mean{v_y^2} }{2}
    + \frac{\gamma}{\gamma-1} \mean{ RT }
    - \frac{1}{\gamma-1} \mean{ s RT }
  \right) \logmean{\rho} \mean{v_x}
  \\&
  + \mean{ v_x } \left( \logmean{\rho} \mean{v_x}^2 + \mean{\rho RT} \right)
  + \mean{ v_y } \left( \logmean{\rho} \mean{v_x} \mean{v_y} \right)
  \\&
  + \frac{1}{\gamma-1} \mean{ RT }
      \left(
        \frac{ \mean{RT} }{ \logmean{RT} }
        - \gamma
        + \mean{\log RT}
        - (\gamma-1) \mean{\log \rho}
      \right) \logmean{\rho} \mean{v_x}
  \\&
  + \mean{ RT } \mean{\rho} \mean{v_x}
  - \mean{\rho v_x RT}
  \\
  =&
  \logmean{\rho} \mean{v_x} \left(
    \mean{v_x}^2 + \mean{v_y}^2 - \frac{ \mean{v_x^2} + \mean{v_y^2} }{2}
  \right)
  \\&
  + \mean{\rho} \mean{v_x} \mean{RT}
  + \mean{\rho RT} \mean{v_x}
  - \mean{\rho v_x RT}
  + \frac{1}{\gamma-1} \logmean{\rho} \mean{v_x} \frac{ \mean{RT}^2 }{ \logmean{RT} }
  \\&
  + \logmean{\rho} \mean{v_x} \left(
    \frac{1}{\gamma-1} \mean{ RT } \mean{\log RT}
    -  \mean{\log \rho} \mean{ RT }
    - \frac{1}{\gamma-1} \mean{ s RT }
  \right),
  \\
  \fnumy_{\rho e}
  =&
  \left(
    - \frac{ \mean{v_x^2} + \mean{v_y^2} }{2}
    + \frac{\gamma}{\gamma-1} \mean{ RT }
    - \frac{1}{\gamma-1} \mean{ s RT }
  \right) \logmean{\rho} \mean{v_y}
  \\&
  + \mean{ v_x } \left( \logmean{\rho} \mean{v_x} \mean{v_y} \right)
  + \mean{ v_y } \left( \logmean{\rho} \mean{v_y}^2 + \mean{\rho RT} \right)
  \\&
  + \frac{1}{\gamma-1} \mean{ RT }
      \left(
        \frac{ \mean{RT} }{ \logmean{RT} }
        - \gamma
        + \mean{\log RT}
        - (\gamma-1) \mean{\log \rho}
      \right) \logmean{\rho} \mean{v_y}
  \\&
  + \mean{ RT } \mean{\rho} \mean{v_y}
  - \mean{\rho v_y RT}
  \\
  =&
  \logmean{\rho} \mean{v_y} \left(
    \mean{v_x}^2 + \mean{v_y}^2 - \frac{ \mean{v_x^2} + \mean{v_y^2} }{2}
  \right)
  \\&
  + \mean{\rho} \mean{v_y} \mean{RT}
  + \mean{\rho RT} \mean{v_y}
  - \mean{\rho v_y RT}
  + \frac{1}{\gamma-1} \logmean{\rho} \mean{v_y} \frac{ \mean{RT}^2 }{ \logmean{RT} }
  \\&
  + \logmean{\rho} \mean{v_y} \left(
    \frac{1}{\gamma-1} \mean{ RT } \mean{\log RT}
    -  \mean{\log \rho} \mean{ RT }
    - \frac{1}{\gamma-1} \mean{ s RT }
  \right).
\end{aligned}
\end{equation}
Here, the first two lines of the results are consistent approximations of the
fluxes $\frac{1}{2} \rho v^2 v_{x / y} + \frac{\gamma}{\gamma-1} p$ with
additional terms that are consistent with zero, since
$s = \log RT - (\gamma-1) \log \rho$.

\subsection{Other Variables}

As in section \ref{sec:fluxes}, other choices of variables are possible, e.g.
$\rho, v, \chi$  with $\frac{p}{\rho} = g(\chi)$. However, this approach is
not further pursued here, since there does not seem to be a clear intuition,
which choice is preferable.

\section{Numerical Surface Fluxes / Riemann Solvers}
\label{sec:surface-fluxes}

The numerical fluxes $\fnum$ used in the surface terms \eqref{eq:surface-terms} 
of the semidiscretisation \eqref{eq:semidiscretisation} are an important ingredient
for stability and robustness of the method. In a first order finite volume setting, 
they determine the method completely.
Here, some choices of these numerical fluxes will be presented and compared. Since
they are used in the surface terms, they are frequently called surface fluxes
in order to distinguish them from the volume fluxes $\fvol$ used in the volume
terms \eqref{eq:volume-terms} of the semidiscretisation \eqref{eq:semidiscretisation}.

\subsection{Adding Dissipation to Entropy Conservative Fluxes}

Similarly to the local Lax-Friedrichs flux
\begin{equation}
\label{eq:LLF}
  \fnum_\mathrm{LLF}(u_-,u_+)
  =
  \frac{f(u_+) + f(u_-)}{2} - \frac{\lambda}{2} (u_+ - u_-),
\end{equation}
an entropy stable flux can be constructed as an entropy conservative central flux
plus an additional dissipation term.

The simplest choice is to add a local Lax-Friedrichs type dissipation of the form
$- \frac{\lambda}{2} \jump{u}$. The resulting flux is entropy stable, since
multiplication with the jump of the entropy variables results in
\begin{equation}
\begin{aligned}
  - \frac{\lambda}{2} \jump{w} \cdot \jump{u}
  =&
  - \frac{\lambda}{2} \jump{w} \cdot
  \int_{0}^1 \frac{\partial u}{\partial w}\left(w_- + \sigma(w_+ - w_-) \right)
  \cdot (w_+ - w_-) \dif \sigma
  \\
  =&
  - \frac{\lambda}{2} \jump{w} \cdot
  \int_{0}^1 \frac{\partial u}{\partial w}\left(w_- + \sigma(w_+ - w_-) \right) \dif \sigma
  \cdot \jump{w},
\end{aligned}
\end{equation}
and $\frac{\partial u}{\partial w} = \left( \frac{\partial w}{\partial u} \right)^{-1}$
is positive definite, since $w = \partial_u U$ and the entropy $U$ is convex.

Another construction uses a dissipation term of the form $- \frac{1}{2} \abs{f'(u)} \jump{u}
\approx - \frac{1}{2} \abs{f'(u)} \partial_w u \cdot \jump{w}$.
Using the scaling of the eigenvectors proposed by Barth \cite[Theorem 4]{barth1999numerical}
results in $\abs{f'(u)} = R \abs{\Lambda} \inv R$ and $\partial_w u = R R ^T$,
where $\Lambda$ contains the eigenvalues of $f'(u)$ on the diagonal. Thus,
$- \abs{f'(u)} \jump{u} \approx - R \abs{\Lambda} R^T \cdot \jump{w}$ and the matrix
$R \abs{\Lambda} R^T$ is positive definite.

Using this form, choosing $\abs{\Lambda} = \lambda \I$ and some intermediate value
$H = R R^T = \partial_w u$ results in a scalar dissipation term (SD)
$- \frac{\lambda}{2} H \jump{w}$. A matrix dissipation term (MD) is obtained by
setting $\abs{\Lambda} = \diag{ \abs{\lambda_i} }$.

Of course, the matrices $H, R, \abs{\Lambda}$ have to be evaluated at some suitable
intermediate values. Derigs et al. \cite{derigs2016novelAveraging, winters2016uniquely}
investigated this problem for ideal MHD and the Euler equations using the
entropy conservative flux \eqref{eq:Chandrashekar-rho-v-beta-EC-KEP} of
\cite{chandrashekar2013kinetic} and derived scalar and matrix dissipation operators
of the forms described above. Additionally, they proposed to use the convex combination
$- \Xi \frac{\lambda}{2} H \jump{w} - (1-\Xi) \frac{1}{2} R \abs{\Lambda} R^T \cdot \jump{w}$,
where $\Xi = \abs{ \frac{p_+ - p_-}{p_+ + p_-} }^\frac{1}{2}$ is the indicator
of the shock strength also used in \cite{chandrashekar2013kinetic}.

\subsection{Preserving Positivity of the Density}
\label{subsec:positivity-density}

The Euler equations are valid for positive density $\rho$ and pressure $p$.
In order to be robust, the numerical flux $\fnum$ should preserve these invariant
regions of the Euler equations in a first order finite volume update procedure
using an explicit Euler step in time
\begin{equation}
\label{eq:FV}
  u_i^+
  =
  u_i - \frac{\Delta t}{\Delta x} \left(
    \fnum(u_i, u_{i+1}) - \fnum(u_{i-1}, u_i)
  \right).
\end{equation}
Extensions of this property to higher order methods can be constructed using the
framework of Zhang and Shu \cite{zhang2011maximum}. As described inter alia in
\cite[Remark 2.4]{zhang2010positivity}, the (local) Lax-Friedrichs flux preserves 
positivity of both density and pressure. Here, the entropy conservative numerical
fluxes described in sections~\ref{sec:fluxes} and \ref{sec:reversed-fluxes} are
investigated. The main result concerning positivity of the density is
\begin{theorem}
\label{thm:positivity-general}
  Suppose that the numerical density flux $\fnum_\rho$ can be written as
  $\fnum_\rho = \overline\rho \cdot \mean{v} - \frac{\lambda}{2} \jump{\rho}$,
  where $\lambda \geq \max\set{ \abs{v_i}, \abs{v_{i+1}} }$ and $\overline\rho$
  is some mean value satisfying $\overline\rho \leq \mean{\rho}$, i.e.
  \begin{equation}
  \label{eq:mean-value-rho}
    \overline\rho(\rho_i, \rho_{i+1}) 
    \in \big[ \min\set{\rho_i, \rho_{i+1}}, \max\set{\rho_i, \rho_{i+1}} \big],
    \;
    \overline\rho(\rho_i, \rho_{i+1}) \leq \frac{\rho_i + \rho_{i+1}}{2}.
  \end{equation}
  Then the first order FV scheme \eqref{eq:FV} preserves the non-negativity of
  the density $\rho$ under the CFL condition
  \begin{equation}
  \label{eq:CFL-positivity-general}
    \Delta t \leq \frac{\Delta x}{2 \lambda}.
  \end{equation}
\end{theorem}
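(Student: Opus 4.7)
The plan is to substitute the assumed form $\fnum_\rho = \overline\rho \, \mean{v} - \tfrac{\lambda}{2} \jump{\rho}$ into the density component of the finite-volume update \eqref{eq:FV} and reorganise the right-hand side so that the local Lax--Friedrichs diffusion provides just enough slack to absorb the convective transport. Writing $\nu = \Delta t / \Delta x$ and using $\pm$ as shorthand for the right/left interfaces of cell $i$ (so that $\mean{\rho}_\pm = (\rho_i + \rho_{i\pm 1})/2$, etc.), the update becomes
\begin{equation*}
  \rho_i^+ = \rho_i + \nu \bigl( \overline\rho_- \mean{v}_- - \overline\rho_+ \mean{v}_+ \bigr) + \frac{\nu \lambda}{2} \bigl( \rho_{i-1} - 2 \rho_i + \rho_{i+1} \bigr).
\end{equation*}
Assuming inductively that the three densities involved are non-negative, the hypothesis \eqref{eq:mean-value-rho} supplies $0 \leq \overline\rho_\pm \leq \mean{\rho}_\pm$, and the CFL hypothesis on $\lambda$ supplies $|\mean{v}_\pm| \leq \lambda$.

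The key technical step is then the pointwise bound $|\overline\rho \, \mean{v}| \leq \lambda \mean{\rho}$ at each interface. A one-line case split on the sign of $\mean{v}$ settles this: for the sign that makes $\overline\rho \, \mean{v}$ large in absolute value, one multiplies $\overline\rho \leq \mean{\rho}$ by $\mean{v}$ and uses $|\mean{v}| \leq \lambda$; for the opposite sign the product is trivially controlled against zero because $\overline\rho \geq 0$ and $\mean{\rho} \geq 0$. Applying this at both interfaces and summing gives
\begin{equation*}
  \overline\rho_- \mean{v}_- - \overline\rho_+ \mean{v}_+ \;\geq\; -\tfrac{\lambda}{2} \bigl( \rho_{i-1} + 2 \rho_i + \rho_{i+1} \bigr).
\end{equation*}

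Inserted into the update, the $\rho_{i\pm 1}$ contributions from this convective estimate cancel exactly against the $+\tfrac{\nu\lambda}{2}(\rho_{i-1}+\rho_{i+1})$ from the Laplacian piece, while the $\rho_i$ contributions combine with $-\nu\lambda \rho_i$ from the same piece, leaving the clean inequality $\rho_i^+ \geq (1 - 2 \nu \lambda) \rho_i$. The CFL condition \eqref{eq:CFL-positivity-general} is precisely what is needed to keep the prefactor non-negative, which closes the induction. The one place where the argument is genuinely delicate -- and essentially the only content beyond bookkeeping -- is that the bound $\overline\rho \leq \mean{\rho}$ (strictly stronger than $\overline\rho \leq \max\{\rho_i,\rho_{i+1}\}$) is exactly what lets the convective flux be dominated by the dissipation coefficient $\lambda/2$; a weaker ``between min and max'' assumption would still work but only at the price of a stricter CFL. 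All the logarithmic, geometric, harmonic and arithmetic means appearing in the entropy conservative fluxes of Sections~\ref{sec:fluxes} and \ref{sec:reversed-fluxes} satisfy this inequality, which is what makes the theorem applicable so broadly.
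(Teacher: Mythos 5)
Your argument is correct, and it reaches the paper's conclusion $\rho_i^+ \geq \left(1 - 2\frac{\Delta t}{\Delta x}\lambda\right)\rho_i$ by a genuinely different route. The paper splits the update into two half-cell contributions $\frac{1}{2}\rho_i \mp \frac{\Delta t}{\Delta x}\fnum_\rho$, writes $\overline\rho = \alpha\rho_i + (1-\alpha)\rho_{i+1}$ as a convex combination, observes that $\overline\rho \leq \mean{\rho}$ forces $\alpha \lessgtr \frac{1}{2}$ according to which neighbour is larger, and runs a two-case analysis to absorb the $\rho_{i+1}$ term into the $\rho_i$ term; each half is then separately bounded below by $\rho_i\left(\frac{1}{2} - \frac{\lambda\Delta t}{\Delta x}\right)$. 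You instead keep the update whole, view the two LLF terms as a discrete Laplacian, and use the single pointwise bound $\abs{\overline\rho\,\mean{v}} \leq \lambda\mean{\rho}$ (immediate from $0 \leq \overline\rho \leq \mean{\rho}$ and $\abs{\mean{v}} \leq \lambda$ -- the sign case split you describe is not really needed), after which the neighbour contributions cancel exactly. Your version is shorter and avoids the case analysis; the paper's half-cell decomposition has the advantage of exhibiting each one-sided contribution as separately non-negative, which is the structure exploited in Zhang--Shu-type extensions to higher order. Both arguments isolate the same essential hypothesis, $\overline\rho \leq \mean{\rho}$. One side remark of yours is off, however: with only the weaker assumption $\overline\rho \leq \max\set{\rho_i,\rho_{i+1}}$ the result does not survive under a stricter CFL condition -- taking $\rho_i = 0$, $\rho_{i+1} > 0$, $\overline\rho = \rho_{i+1}$ and $\mean{v} = \lambda$ gives $\fnum_\rho = \frac{\lambda}{2}\rho_{i+1} > 0$ and hence $\rho_i^+ < 0$ for every $\Delta t > 0$, so the averaging inequality is indispensable rather than merely convenient.
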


\begin{remark}
  Note that the CFL condition \eqref{eq:CFL-positivity-general} does not depend
  explicitly on the densities and that it does not require a vanishing time step
  $\Delta t$ for $\rho \to 0$.
\end{remark}

\begin{corollary}
\label{cor:positivity-logmean}
  If the numerical density flux
  $\fnum_\rho = \logmean{\rho} \mean{v} - \frac{\lambda}{2} \jump{\rho}$
  is used with $\lambda \geq \max\set{ \abs{v_i}, \abs{v_{i+1}} }$, the first 
  order FV scheme \eqref{eq:FV} preserves the non-negativity of the density $\rho$ 
  under the CFL condition \eqref{eq:CFL-positivity-general}.
\end{corollary}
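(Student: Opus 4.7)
The plan is to deduce this corollary directly from \autoref{thm:positivity-general} by verifying that the logarithmic mean $\logmean{\rho}$ qualifies as an admissible mean $\overline\rho$ in the sense of \eqref{eq:mean-value-rho}. Once that is done, the stated CFL condition is exactly the one from the theorem, and there is nothing else to check.

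First I would verify the min/max enclosure. Applying the classical mean value theorem to $\log$ on the interval with endpoints $\rho_i,\rho_{i+1}$, there exists $\xi$ strictly between these values such that
\begin{equation*}
  \frac{\log \rho_{i+1} - \log \rho_i}{\rho_{i+1} - \rho_i} = \frac{1}{\xi},
\end{equation*}
so that $\logmean{\rho} = \xi \in [\min\{\rho_i,\rho_{i+1}\}, \max\{\rho_i,\rho_{i+1}\}]$. (For $\rho_i = \rho_{i+1}$ the stable form of the logarithmic mean reduces to $\rho_i$, so the enclosure is trivial.) This settles the first part of \eqref{eq:mean-value-rho}.

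The second, non-trivial part is the inequality $\logmean{\rho} \leq \mean{\rho}$, i.e.\ the arithmetic-logarithmic mean inequality. Setting $x = \rho_{i+1}/\rho_i > 0$ and factoring out $\rho_i$, the claim reduces to the scalar inequality $\frac{x-1}{\log x} \leq \frac{x+1}{2}$ for $x > 0$, $x \neq 1$, equivalently $\log x \geq \frac{2(x-1)}{x+1}$. This can be proven by setting $\phi(x) = \log x - \frac{2(x-1)}{x+1}$, computing $\phi'(x) = \frac{(x-1)^2}{x(x+1)^2} \geq 0$, and observing $\phi(1) = 0$; the sign of $\phi$ then gives the desired inequality on both sides of $x = 1$. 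I expect this AM--LM step to be the only genuine calculation, and it is short and standard.

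With both conditions of \eqref{eq:mean-value-rho} verified for $\overline\rho = \logmean{\rho}$, \autoref{thm:positivity-general} applies verbatim and yields non-negativity of $\rho$ under the CFL condition \eqref{eq:CFL-positivity-general}, completing the proof.
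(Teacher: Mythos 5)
Your proposal is correct and follows exactly the paper's route: the corollary is obtained by verifying that the logarithmic mean satisfies the conditions \eqref{eq:mean-value-rho} of Theorem~\ref{thm:positivity-general}, in particular $\logmean{\rho} \leq \mean{\rho}$. The only difference is that the paper cites the literature for the arithmetic--logarithmic mean inequality, whereas you prove it directly (and correctly) via the monotonicity of $\phi(x) = \log x - \frac{2(x-1)}{x+1}$.
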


\begin{proof}[Proof of Corollary~\ref{cor:positivity-logmean}]
  The logarithmic mean fulfils the conditions of Theorem~\ref{thm:positivity-general},
  especially $\logmean{\rho} \leq \mean{\rho}$ \cite{chen2005means}.
\end{proof}

\begin{remark}
\label{rem:positivity-density}
  Due to Corollary~\ref{cor:positivity-logmean}, if the entropy conservative fluxes 
  described in sections~\ref{sec:fluxes} and \ref{sec:reversed-fluxes} containing
  no contribution of the pressure in the density flux are used with dissipation 
  of LLF type in the variable $\rho$, the first order FV scheme \eqref{eq:FV} 
  preserves the non-negativity of the density $\rho$ under the CFL condition 
  \eqref{eq:CFL-positivity-general}. This is also fulfilled for the scalar 
  dissipation operator SD of \cite{derigs2016novelAveraging}.
  Contrary, the entropy conservative fluxes containing an influence of the pressure
  in the density flux do not fulfil the conditions of \autoref{thm:positivity-general}.
  This agrees with physical intuition, since the pressure has no influence on the
  density flux in the Euler equations \eqref{eq:Euler} \cite{derigs2016novelAveraging}.
\end{remark}

\begin{proof}[Proof of Theorem~\ref{thm:positivity-general}]
  The FV step \eqref{eq:FV} for the density can be separated into two parts as
  \begin{equation}
    \rho_i^+
    =
    \left( \frac{1}{2} \rho_i - \frac{\Delta t}{\Delta x} \fnum_\rho(u_i, u_{i+1}) \right)
    +
    \left( \frac{1}{2} \rho_i + \frac{\Delta t}{\Delta x} \fnum_\rho(u_{i-1}, u_i) \right).
  \end{equation}
  Since both can be handled similarly, only the first one will be analysed.
  Inserting the numerical density flux, the mean value $\overline\rho$ can be
  written as a convex combination $\overline\rho = \alpha \rho_i + (1-\alpha) \rho_{i+1}$,
  $\alpha \in [0, 1]$. Thus, the first term becomes
  \begin{equation}
  \begin{aligned}
    &
    \frac{1}{2} \rho_i - \frac{\Delta t}{\Delta x} \left(
      \overline\rho \mean{v} - \frac{\lambda}{2} \jump{\rho}
    \right)
    \\
    =&
    \rho_i \left(
      \frac{1}{2}
      - \frac{\lambda}{2} \frac{\Delta t}{\Delta x}
      - \alpha \mean{v} \frac{\Delta t}{\Delta x}
    \right)
    + \rho_{i+1} \frac{\Delta t}{\Delta x} \left(
      \frac{\lambda}{2} 
      - (1-\alpha) \mean{v}
    \right).
  \end{aligned}
  \end{equation}
  Using $\rho_i,\rho_{i+1} \geq 0$ and $\lambda \geq \max\set{ \abs{v_i}, \abs{v_{i+1}} }$,
  \begin{equation}
  \label{eq:positivity-estimate-1}
    \frac{1}{2} \rho_i - \frac{\Delta t}{\Delta x} \left(
      \overline\rho \mean{v} - \frac{\lambda}{2} \jump{\rho}
    \right)
    \geq
    \rho_i \left(
      \frac{1}{2}
      - \left( \frac{1}{2} + \alpha \right) \frac{\lambda \Delta t}{\Delta x}
    \right)
    + \rho_{i+1} \frac{\lambda \Delta t}{\Delta x} \left( \alpha - \frac{1}{2} \right).
  \end{equation}
  Two cases can be considered.
  \begin{enumerate}
    \item $\rho_i \geq \rho_{i+1}$.
    In this case, $\alpha \leq \frac{1}{2}$, since $\overline\rho \leq \mean{\rho}$.
    Thus, the second term on the right hand side of \eqref{eq:positivity-estimate-1}
    can be bounded as
    \begin{equation}
      \label{eq:positivity-estimate-2}
      \rho_{i+1} \frac{\lambda \Delta t}{\Delta x} \left( \alpha - \frac{1}{2} \right)
      \geq
      \rho_{i} \frac{\lambda \Delta t}{\Delta x} \left( \alpha - \frac{1}{2} \right).
    \end{equation}
    
    \item $\rho_i \leq \rho_{i+1}$.
    In this case, $\alpha \geq \frac{1}{2}$, since $\overline\rho \leq \mean{\rho}$.
    Again, the second term on the right hand side of \eqref{eq:positivity-estimate-1} 
    can be bounded via \eqref{eq:positivity-estimate-2}.
  \end{enumerate}
  In both cases, the term with $\rho_{i+1}$ in \eqref{eq:positivity-estimate-1} 
  can be estimated via $\rho_i$, yielding
  \begin{equation}
  \begin{aligned}
    \frac{1}{2} \rho_i - \frac{\Delta t}{\Delta x} \left(
      \overline\rho \mean{v} - \frac{\lambda}{2} \jump{\rho}
    \right)
    &\geq
    \rho_i \left(
      \frac{1}{2}
      - \left( \frac{1}{2} + \alpha \right) \frac{\lambda \Delta t}{\Delta x}
    \right)
    + \rho_{i} \frac{\lambda \Delta t}{\Delta x} \left( \alpha - \frac{1}{2} \right)
    \\
    &=
    \rho_i \left( \frac{1}{2} - \frac{\lambda \Delta t}{\Delta x} \right).
  \end{aligned}
  \end{equation}
  This is non-negative under the CFL condition \eqref{eq:CFL-positivity-general}.
\end{proof}

\subsection{Preserving Positivity of the Pressure}

Preserving the positivity of the pressure / internal energy is more complicated
than the corresponding property of the density. For the (local) Lax-Friedrichs
flux, it can be proven as described inter alia in
\cite[Remark 2.4]{zhang2010positivity}. Further investigations have to be
conducted for the case of the numerical fluxes considered here.

As a general procedure, the reversed roles of entropy and energy as in section
\ref{sec:reversed-fluxes} can be used to get entropy stable fluxes that preserve
the positivity of the internal energy as described by Bouchut~\cite{bouchut2004nonlinear}.
This corresponds to a computation of the pressure via the entropy, which has also
been used in \cite{derigs2016novelSolver} in an \emph{a posteriori} manner.
However, this direction of further research will not be pursued here.

\subsection{Suliciu Relaxation Solver}
\label{subsec:Suliciu}

The Suliciu relaxation solver described in \cite[Section 2.4]{bouchut2004nonlinear}
for the two-dimensional Euler equations in $x$ direction can be summed up as follows.
At first, intermediate wave speeds are computed via
\begin{small}
\begin{align*}
\stepcounter{equation}\tag{\theequation}
  \text{if } p_+ \geq p_-,
  &
  \begin{dcases}
    \frac{c_-}{\rho_-}
    =
    \sqrt{\gamma \frac{p_-}{\rho_-}}
    + \frac{\gamma+1}{2} \max\set{
      \frac{p_+ - p_-}{\rho_+ \sqrt{\gamma p_+ / \rho_+}} + v_{x-} - v_{x+}, 0},
    \\
    \frac{c_+}{\rho_+}
    =
    \sqrt{\gamma \frac{p_+}{\rho_+}}
    + \frac{\gamma+1}{2} \max\set{
      \frac{p_- - p_+}{c_-} + v_{x-} - v_{x+}, 0},
  \end{dcases}
  \\
  \text{if } p_+ \leq p_-,
  &
  \begin{dcases}
    \frac{c_+}{\rho_+}
    =
    \sqrt{\gamma \frac{p_+}{\rho_+}}
    + \frac{\gamma+1}{2} \max\set{
      \frac{p_- - p_+}{\rho_- \sqrt{\gamma p_- / \rho_-}} + v_{x-} - v_{x+}, 0},
    \\
    \frac{c_-}{\rho_-}
    =
    \sqrt{\gamma \frac{p_-}{\rho_-}}
    + \frac{\gamma+1}{2} \max\set{
      \frac{p_+ - p_-}{c_+} + v_{x-} - v_{x+}, 0}.
  \end{dcases}
\end{align*}
\end{small}%
Then, intermediate values are computed using $c_\pm = \rho_\pm \frac{c_\pm}{\rho_\pm}$ as
\begin{small}
\begin{equation}
\begin{aligned}
  p_-^*
  =&
  p_+^*
  =
  \frac{c_+ p_- + c_- p_+ - c_- c_+ (v_{x+} - v_{x-})}{c_- + c_+},
  &
  \frac{1}{\rho_+^*}
  =&
  \frac{1}{\rho_+} + \frac{c_- (v_{x+} - v_{x-}) + p_+ - p_-}{c_+ (c_- + c_+)},
  \\
  v_{x-}^*
  =&
  v_{x+}^*
  =
  \frac{c_- v_{x-} + c_+ v_{x+} + p_- - p_+}{c_- + c_+},
  &
  \epsilon_-^*
  =&
  \epsilon_- + \frac{(p_-^*)^2 - p_-^2}{2 c_-^2},
  \\
  \frac{1}{\rho_-^*}
  =&
  \frac{1}{\rho_-} + \frac{c_+ (v_{x+} - v_{x-}) + p_- - p_+}{c_- (c_- + c_+)},
  &
  \epsilon_+^*
  =&
  \epsilon_+ + \frac{ (p_+^*)^2 - p_+^2 }{2 c_+^2}.
\end{aligned}
\end{equation}
\end{small}%
Finally, the numerical fluxes are given by ($\fnumy$ analogously)
\begin{small}
\begin{equation}
\label{eq:Suliciu}
  \vect{ \fnum_{\rho} \\ \fnum_{\rho v_x} \\ \fnum_{\rho v_y} \\ \fnum_{\rho e} }
  =
  \begin{dcases}
    \vect{
      \rho_- v_{x-} \\
      \rho_- v_{x-}^2 + p_- \\
      \rho_- v_{x-} v_{y,-} \\
      \left( \frac{1}{2} \rho_- v_{x-}^2 + \frac{1}{2} \rho_- v_{y-}^2
              + \rho_- \epsilon_- + p_- \right) v_{x-}
    },
    \\ \qquad\qquad\qquad \text{if } 0 \leq v_{x-} - \frac{c_-}{\rho_-},
    \\
    \vect{
      \rho_-^* v_{x-}^* \\
      \rho_-^* (v_{x-}^*)^2 + p_-^* \\
      \rho_-^* v_{x-}^* v_{y,-} \\
      \left( \frac{1}{2} \rho_-^* (v_{x-}^*)^2 + \frac{1}{2} \rho_-^* (v_{y-}^*)^2
              + \rho_-^* \epsilon_-^* + p_-^* \right) v_{x-}^*
    },
    \\ \qquad\qquad\qquad \text{if } v_{x-} - \frac{c_-}{\rho_-} < 0 \leq v_{x-}^* \equiv v_{x+}^*,
    \\
    \vect{
      \rho_+^* v_{x+}^* \\
      \rho_+^* (v_{x+}^*)^2 + p_+^* \\
      \rho_+^* v_{x+}^* v_{y,+} \\
      \left( \frac{1}{2} \rho_+^* (v_{x+}^*)^2 + \frac{1}{2} \rho_+^* (v_{y+}^*)^2
              + \rho_+^* \epsilon_+^* + p_+^* \right) v_{x+}^*
    },
    \\ \qquad\qquad\qquad \text{if } v_{x-}^* \equiv v_{x+}^* < 0 \leq v_{x+} + \frac{c_+}{\rho_+},
    \\
    \vect{
      \rho_+ v_{x+} \\
      \rho_+ v_{x+}^2 + p_+ \\
      \rho_+ v_{x+} v_{y,+} \\
      \left( \frac{1}{2} \rho_+ v_{x+}^2 + \frac{1}{2} \rho_+ v_{y+}^2
              + \rho_+ \epsilon_+ + p_+ \right) v_{x+}
    },
    \\ \qquad\qquad\qquad \text{else}.
  \end{dcases}
\end{equation}
\end{small}%
This flux is entropy stable and positivity preserving for $\rho$ and $p$, with
corresponding CFL condition
\begin{equation}
\label{eq:Suliciu-CLF}
  \frac{\Delta t}{\Delta x} \max\set{
    \abs{v_{x-} - \frac{c_-}{\rho_-}}, \abs{v_{x+} + \frac{c_+}{\rho_+}} } \leq \frac{1}{2}.
\end{equation}
Additionally, it satisfies the maximum principle on the specific entropy $s$ and
resolves stationary contact discontinuities with $v_x \equiv 0, p \equiv \mathrm{const}$
exactly.

\section{Numerical Tests}
\label{sec:numerical-tests}

In this section, some numerical experiments using the methods described in the
previous sections will be conducted. Unless stated otherwise, the ratio of
specific heats is set to $\gamma = 1.4$ and the three stage, third-order,
strong stability preserving Runge-Kutta method of \citet{gottlieb1998total}
will be used as time integration method. If a one-dimensional Riemann problem 
is considered, the exact solution is computed as described in 
\cite[Section 4]{toro2009riemann}.

\subsection{Isentropic Vortex}
\label{sec:isentropic-vortex}

At first, the isentropic vortex problem of \citet[Problem 8 in section 5.1]{shu1997essentially}
will be used to test the methods for a smooth solution. The initial condition
is given by
\begin{equation}
\label{eq:isentropic-vortex}
  \begin{pmatrix}
    \rho_0 \\
    \rho v_{x,0} \\
    \rho v_{y,0} \\
    \rho e_0
  \end{pmatrix}
  =
  \begin{pmatrix}
    \rho_\infty \left( \frac{RT_0}{RT_\infty} \right)^{1/(\gamma-1)} \\
    \rho_0 v_{x,0} \\
    \rho_0 v_{y,0} \\
    \rho \frac{v_{x,0}^2 +  v_{y,0}^2}{2} + \frac{p_0}{\gamma-1}
  \end{pmatrix},
\end{equation}
where
\begin{equation}
  v_{x,0} = v_{x,\infty} + \delta v_x,
  \quad
  v_{y,0} = v_{y,\infty} + \delta v_y,
  \quad
  p_0 = \rho_0 \, RT_0,
  \quad
  RT_0 = RT_\infty + \delta RT,
\end{equation}
and
\begin{gather}
  \delta v_x(x,y)
  =
  -y \frac{\beta}{2\pi} \exp\left( \frac{1-x^2-y^2}{2 r} \right),
  \quad
  \delta v_y(x,y)
  =
  x \frac{\beta}{2\pi} \exp\left( \frac{1-x^2-y^2}{2 r} \right),
  \\
  \delta RT(x,y) 
  = 
  - \frac{\gamma-1}{\gamma} \frac{\beta}{8\pi^2} \exp\left( \frac{1-x^2-y^2}{r} \right).
\end{gather}
The parameters have been chosen as
\begin{equation}
  \rho_\infty = 1,
  \quad
  v_{x,\infty} = 1,
  \quad
  v_{y,\infty} = 0,
  \quad
  RT_\infty = 1,
  \quad
  \beta = 5,
  \quad
  r = \frac{1}{4}.
\end{equation}
The solution is computed on the domain $[-5,5]^2$ during the time interval $[0,10]$.
Thus, the perturbation is of the order of magnitude
$\exp\left( \frac{1-5^2}{2 r} \right) \approx 10^{-21}$ at the boundary and should
be approximately negligible for $64$ bit floating point values.
Therefore, the isentropic vortex should be advected with the free stream velocity
and reach its initial position at $t = 10$.

The Suliciu relaxation solver has been used as numerical flux and several
volume fluxes have been used for the subcell flux differencing form:
\begin{itemize}
  \item 
  Central:
  The central flux $\mean{f}$ resulting in a standard nodal DG method
  as described by \citet{gassner2016split}.
  
  \item
  Morinishi:
  The flux resulting in the split form of \citet{morinishi2010skew}
  as described by \citet{gassner2016split}.
  
  \item
  Ducros:
  The flux resulting in the split form of \citet{ducros2000high}
  as described by \citet{gassner2016split}.
  
  \item
  KG:
  The flux resulting in the split form of \citet{kennedy2008reduced}
  as described by \citet{gassner2016split}.
  
  \item
  Pirozzoli:
  The flux resulting in the split form of \citet{pirozzoli2011numerical}
  as described by \citet{gassner2016split}.
  
  \item
  IR:
  The entropy conservative flux \eqref{eq:Roe-EC} of \citet{roe2006affordable}
  and \citet{ismail2009affordable}.
  
  \item
  Ch:
  The entropy conservative flux \eqref{eq:Chandrashekar-rho-v-beta-EC-KEP} of
  \citet{chandrashekar2013kinetic}.
\end{itemize}
\begin{multicols}{2}
\begin{itemize}
  \item
  $\rho,v,\beta~(2)$:
  The flux \eqref{eq:Chandrashekar-rho-v-beta-EC-notKEP}.
  
  \item
  $\rho,v,\frac{1}{p}$:
  The flux \eqref{eq:rho-v-1/p-EC-KEP}.
  
  \item
  $\rho,v,p$:
  The flux \eqref{eq:rho-v-p-EC-KEP}.
  
  \item
  $\rho,v,T~(1)$:
  The flux \eqref{eq:rho-v-T-EC-KEP}.
  
  \item
  $\rho,v,T~(2)$:
  The flux \eqref{eq:rho-v-T-EC-notKEP}.
  
  \item
  $\rho,v,T~(\text{rev})$:
  The flux \eqref{eq:reversed-rho-v-T-EC-KEP}.
\end{itemize}
\end{multicols}
The errors (computed via the mass matrix $\mat{M}$, i.e. Lobatto quadrature) in
the density for varying polynomial degrees $p$ on a mesh using
$10 \times 10$ elements are shown in Table \ref{tab:isentropic-vortex}.
As can be seen there, there is not much variance in the error for $p = 1$.
For $p \in \set{2,3}$, there are variations up to approximately \num{15}{\%}
[e.g. $p=2$, Ducros, Morinishi and $p=3$, Ducros, $\rho,v,T$ (rev)].
There does not seem to be any advantage of the entropy stable formulations compared
to the other ones in this test case, similar to the results of \citet{gassner2016split}
for the Taylor Green vortex.

\begin{table}[!htp]
\small
\caption{Errors for the isentropic vortex problem \eqref{eq:isentropic-vortex}
         using $10 \times 10$ elements of varying polynomial degrees $p$,
         several volume fluxes and the Suliciu relaxation solver as numerical flux.}
\label{tab:isentropic-vortex}
\sisetup{
  output-exponent-marker=\text{e},
  round-mode=places,
  round-precision=2
}
\begin{center}
\begin{tabular}{c | c|c|c|c|c|c|c}
  \toprule
  & Central
  & Morinishi
  & Ducros
  & KG
  & Pirozzoli
  & IR
  & Ch
  \\
  $p$
    & $\norm{\mathrm{err_\rho}}_M$
    & $\norm{\mathrm{err_\rho}}_M$
    & $\norm{\mathrm{err_\rho}}_M$
    & $\norm{\mathrm{err_\rho}}_M$
    & $\norm{\mathrm{err_\rho}}_M$
    & $\norm{\mathrm{err_\rho}}_M$
    & $\norm{\mathrm{err_\rho}}_M$
  \\
  \midrule
      1  &  \num{1.203e-01}            &  \num{1.211e-01}            &  \num{1.199e-01}            &  \num{1.200e-01}            &  \num{1.200e-01}            &  \num{1.204e-01}            &  \num{1.207e-01}            \\
      2  &  \num{4.277e-02}            &  \num{4.356e-02}            &  \num{3.793e-02}            &  \num{3.830e-02}            &  \num{3.829e-02}            &  \num{3.929e-02}            &  \num{4.114e-02}            \\
      3  &  \num{1.728e-02}            &  \num{1.810e-02}            &  \num{1.595e-02}            &  \num{1.622e-02}            &  \num{1.620e-02}            &  \num{1.644e-02}            &  \num{1.796e-02}            \\
  \bottomrule
  \toprule
  & $\rho,v,\beta$ (2)
  & $\rho,v,\frac{1}{p}$
  & $\rho,v,p$
  & $\rho,v,T$ (1)
  & $\rho,v,T$ (2)
  & $\rho,V,T$ (rev)
  \\
  $p$
    & $\norm{\mathrm{err_\rho}}_M$
    & $\norm{\mathrm{err_\rho}}_M$
    & $\norm{\mathrm{err_\rho}}_M$
    & $\norm{\mathrm{err_\rho}}_M$
    & $\norm{\mathrm{err_\rho}}_M$
    & $\norm{\mathrm{err_\rho}}_M$
  \\
  \midrule
      1  &  \num{1.207e-01}            &  \num{1.210e-01}            &  \num{1.210e-01}            &  \num{1.207e-01}            &  \num{1.204e-01}            &  \num{1.203e-01}            \\
      2  &  \num{4.128e-02}            &  \num{4.107e-02}            &  \num{4.107e-02}            &  \num{4.114e-02}            &  \num{3.929e-02}            &  \num{4.124e-02}            \\
      3  &  \num{1.805e-02}            &  \num{1.790e-02}            &  \num{1.790e-02}            &  \num{1.796e-02}            &  \num{1.644e-02}            &  \num{1.892e-02}            \\
  \bottomrule
\end{tabular}
\end{center}
\end{table}

\subsection{Sod's Shock Tube: Subcell Flux Differencing}
\label{sec:sod-flux-diff}

In this section, the classical shock tube of \citet{sod1978survey} will be used
to test the semidiscretisations \eqref{eq:semidiscretisation} using the volume
terms \eqref{eq:volume-terms} and the surface terms \eqref{eq:surface-terms}.
The initial condition is given in primitive variables by
\begin{equation}
\label{eq:sod}
\begin{aligned}
  \rho_0(x) =
  \begin{cases}
    1,    & x < \frac{1}{2}, \\
    0.125, & \text{ else},
  \end{cases}
  ,\qquad
  v_0(x) = 0
  ,\qquad
  p_0(x) =
  \begin{cases}
    1, & x < \frac{1}{2}, \\
    0.1, & \text{ else},
  \end{cases}
\end{aligned}
\end{equation}
and the conservative variables are computed via $\rho v_0 = \rho_0 v_0$ and
$\rho e_0 = \frac{1}{2} \rho_0 v_0^2 + \frac{p_0}{\gamma-1}$.
The solution is computed on the domain $[-0.5, 1.5]$ from $t = 0$ until $t = 0.25$
using \num{3000} time steps.

The error of the numerical solution for the density $\rho$ is calculated using
Lobatto quadrature, i.e. the diagonal mass matrix $\mat{M}$ of the SBP operator,
for varying numbers of elements $N$ and polynomial degrees $p$. The results 
using the Suliciu relaxation solver and the local Lax-Friedrichs flux as
numerical flux are shown in Tables \ref{tab:sod-suliciu} and \ref{tab:sod-LLF},
respectively. 

There is some variance across the results for different volume fluxes up to
approximately \num{25}{\%} [e.g. for $p=1$, $N=10$, $(\rho,v,p)$ vs.
$\rho,v,T \text{(rev)}$].
However, there is no clear bias towards one volume flux across all combinations
of the polynomial degree $p$ and the number of elements $N$ [e.g. for $p=1$ and
$N=320$, $(\rho,v,p)$ has a smaller error than $\rho,v,T \text{(rev)}$].

Contrary, comparing the numerical surface fluxes, there is some clear bias.
Although the local Lax-Friedrichs flux yields a smaller error in some cases
[e.g. $p=1$, $N=10$, $(\rho,v,p)$], the Suliciu relaxation solver results in
smaller errors if the resolution is good enough. Therefore, it can be considered
superior to the LLF flux in this test case.

Additionally, the volume fluxes recovering the central form as well as the split
forms of \citet{morinishi2010skew}, \citet{ducros2000high}, \citet{kennedy2008reduced},
and \citet{pirozzoli2011numerical} have been used. The relevant results are shown
in the Tables \ref{tab:sod-nonEC-suliciu} (Suliciu) and \ref{tab:sod-nonEC-LLF}
(LLF). As can be seen there, the central flux and the splitting of \citet{morinishi2010skew}
are unstable. The splitting of \citet{ducros2000high} crashes for polynomial
degree $p = 5$ whereas the other splittings remain stable. There is not much
variance in the error for the stable calculations.

As for the entropy conservative volume fluxes, the Suliciu relaxation solver yields
less error if the resolution is good enough.

\begin{sidewaystable}
\small
\caption{Errors and experimental order of convergence (EOC) for varying polynomial
         degrees $p$ and number of elements $N$ for the Sod shock tube problem
         \eqref{eq:sod} using the Suliciu relaxation solver as numerical flux and
         entropy conserving volume fluxes.}
\label{tab:sod-suliciu}
\sisetup{
  output-exponent-marker=\text{e},
  round-mode=places,
  round-precision=2
}
\begin{center}
\begin{tabular}{rr | rr|rr|rr|rr|rr|rr|rr|rr}
  \toprule
  & &  \multicolumn{2}{c}{IR}
    &  \multicolumn{2}{c}{Ch} 
    &  \multicolumn{2}{c}{$\rho,v,\beta$ (2)}
    &  \multicolumn{2}{c}{$\rho,v,\frac{1}{p}$}
    &  \multicolumn{2}{c}{$\rho,v,p$}
    &  \multicolumn{2}{c}{$\rho,v,T$ (1)}
    &  \multicolumn{2}{c}{$\rho,v,T$ (2)} 
    &  \multicolumn{2}{c}{$\rho,V,T$ (rev)}
  \\
  $p$ & $N$
    & $\norm{\mathrm{err_\rho}}_M$ & EOC
    & $\norm{\mathrm{err_\rho}}_M$ & EOC
    & $\norm{\mathrm{err_\rho}}_M$ & EOC
    & $\norm{\mathrm{err_\rho}}_M$ & EOC
    & $\norm{\mathrm{err_\rho}}_M$ & EOC
    & $\norm{\mathrm{err_\rho}}_M$ & EOC
    & $\norm{\mathrm{err_\rho}}_M$ & EOC
    & $\norm{\mathrm{err_\rho}}_M$ & EOC
  \\
  \midrule
   1  &    10  &  \num{1.072e-01}  &               &  \num{1.063e-01}  &               &  \num{1.068e-01}  &               &  \num{1.368e-01}  &               &  \num{1.368e-01}  &               &  \num{1.063e-01}  &               &  \num{1.057e-01}  &               &  \num{1.031e-01}  &               \\
      &    20  &  \num{6.898e-02}  &  \num{ 0.64}  &  \num{6.852e-02}  &  \num{ 0.63}  &  \num{7.066e-02}  &  \num{ 0.60}  &  \num{6.941e-02}  &  \num{ 0.98}  &  \num{6.941e-02}  &  \num{ 0.98}  &  \num{6.852e-02}  &  \num{ 0.63}  &  \num{6.870e-02}  &  \num{ 0.62}  &  \num{6.829e-02}  &  \num{ 0.60}  \\
      &    40  &  \num{4.788e-02}  &  \num{ 0.53}  &  \num{4.825e-02}  &  \num{ 0.51}  &  \num{5.153e-02}  &  \num{ 0.46}  &  \num{4.382e-02}  &  \num{ 0.66}  &  \num{4.382e-02}  &  \num{ 0.66}  &  \num{4.825e-02}  &  \num{ 0.51}  &  \num{4.859e-02}  &  \num{ 0.50}  &  \num{4.792e-02}  &  \num{ 0.51}  \\
      &    80  &  \num{3.542e-02}  &  \num{ 0.43}  &  \num{3.616e-02}  &  \num{ 0.42}  &  \num{3.762e-02}  &  \num{ 0.45}  &  \num{3.378e-02}  &  \num{ 0.38}  &  \num{3.378e-02}  &  \num{ 0.38}  &  \num{3.616e-02}  &  \num{ 0.42}  &  \num{3.612e-02}  &  \num{ 0.43}  &  \num{3.641e-02}  &  \num{ 0.40}  \\
      &   160  &  \num{2.639e-02}  &  \num{ 0.42}  &  \num{2.665e-02}  &  \num{ 0.44}  &  \num{2.786e-02}  &  \num{ 0.43}  &  \num{2.552e-02}  &  \num{ 0.40}  &  \num{2.552e-02}  &  \num{ 0.40}  &  \num{2.665e-02}  &  \num{ 0.44}  &  \num{2.692e-02}  &  \num{ 0.42}  &  \num{2.703e-02}  &  \num{ 0.43}  \\
      &   320  &  \num{1.976e-02}  &  \num{ 0.42}  &  \num{1.975e-02}  &  \num{ 0.43}  &  \num{2.005e-02}  &  \num{ 0.47}  &  \num{1.907e-02}  &  \num{ 0.42}  &  \num{1.907e-02}  &  \num{ 0.42}  &  \num{1.975e-02}  &  \num{ 0.43}  &  \num{1.990e-02}  &  \num{ 0.44}  &  \num{2.029e-02}  &  \num{ 0.41}  \\
  \midrule 
   2  &    10  &  \num{6.368e-02}  &               &  \num{6.262e-02}  &               &  \num{6.278e-02}  &               &  \num{6.391e-02}  &               &  \num{6.391e-02}  &               &  \num{6.262e-02}  &               &  \num{6.308e-02}  &               &  \num{6.511e-02}  &               \\
      &    20  &  \num{3.368e-02}  &  \num{ 0.92}  &  \num{3.338e-02}  &  \num{ 0.91}  &  \num{3.353e-02}  &  \num{ 0.90}  &  \num{3.597e-02}  &  \num{ 0.83}  &  \num{3.597e-02}  &  \num{ 0.83}  &  \num{3.338e-02}  &  \num{ 0.91}  &  \num{3.323e-02}  &  \num{ 0.92}  &  \num{3.300e-02}  &  \num{ 0.98}  \\
      &    40  &  \num{2.843e-02}  &  \num{ 0.24}  &  \num{2.859e-02}  &  \num{ 0.22}  &  \num{2.881e-02}  &  \num{ 0.22}  &  \num{2.770e-02}  &  \num{ 0.38}  &  \num{2.770e-02}  &  \num{ 0.38}  &  \num{2.859e-02}  &  \num{ 0.22}  &  \num{2.852e-02}  &  \num{ 0.22}  &  \num{2.857e-02}  &  \num{ 0.21}  \\
      &    80  &  \num{2.140e-02}  &  \num{ 0.41}  &  \num{2.124e-02}  &  \num{ 0.43}  &  \num{2.140e-02}  &  \num{ 0.43}  &  \num{2.093e-02}  &  \num{ 0.40}  &  \num{2.093e-02}  &  \num{ 0.40}  &  \num{2.124e-02}  &  \num{ 0.43}  &  \num{2.126e-02}  &  \num{ 0.42}  &  \num{2.127e-02}  &  \num{ 0.43}  \\
      &   160  &  \num{1.226e-02}  &  \num{ 0.80}  &  \num{1.216e-02}  &  \num{ 0.81}  &  \num{1.222e-02}  &  \num{ 0.81}  &  \num{1.209e-02}  &  \num{ 0.79}  &  \num{1.209e-02}  &  \num{ 0.79}  &  \num{1.216e-02}  &  \num{ 0.81}  &  \num{1.222e-02}  &  \num{ 0.80}  &  \num{1.260e-02}  &  \num{ 0.76}  \\
      &   320  &  \num{6.649e-03}  &  \num{ 0.88}  &  \num{6.606e-03}  &  \num{ 0.88}  &  \num{6.657e-03}  &  \num{ 0.88}  &  \num{6.812e-03}  &  \num{ 0.83}  &  \num{6.812e-03}  &  \num{ 0.83}  &  \num{6.606e-03}  &  \num{ 0.88}  &  \num{6.603e-03}  &  \num{ 0.89}  &  \num{6.701e-03}  &  \num{ 0.91}  \\
  \midrule 
   3  &    10  &  \num{3.298e-02}  &               &  \num{3.325e-02}  &               &  \num{3.345e-02}  &               &  \num{3.694e-02}  &               &  \num{3.694e-02}  &               &  \num{3.325e-02}  &               &  \num{3.238e-02}  &               &  \num{3.175e-02}  &               \\
      &    20  &  \num{2.931e-02}  &  \num{ 0.17}  &  \num{2.913e-02}  &  \num{ 0.19}  &  \num{2.915e-02}  &  \num{ 0.20}  &  \num{2.843e-02}  &  \num{ 0.38}  &  \num{2.843e-02}  &  \num{ 0.38}  &  \num{2.913e-02}  &  \num{ 0.19}  &  \num{2.867e-02}  &  \num{ 0.18}  &  \num{2.914e-02}  &  \num{ 0.12}  \\
      &    40  &  \num{2.137e-02}  &  \num{ 0.46}  &  \num{2.098e-02}  &  \num{ 0.47}  &  \num{2.135e-02}  &  \num{ 0.45}  &  \num{2.247e-02}  &  \num{ 0.34}  &  \num{2.247e-02}  &  \num{ 0.34}  &  \num{2.098e-02}  &  \num{ 0.47}  &  \num{2.110e-02}  &  \num{ 0.44}  &  \num{2.070e-02}  &  \num{ 0.49}  \\
      &    80  &  \num{1.083e-02}  &  \num{ 0.98}  &  \num{1.090e-02}  &  \num{ 0.95}  &  \num{1.108e-02}  &  \num{ 0.95}  &  \num{1.159e-02}  &  \num{ 0.96}  &  \num{1.159e-02}  &  \num{ 0.96}  &  \num{1.090e-02}  &  \num{ 0.95}  &  \num{1.092e-02}  &  \num{ 0.95}  &  \num{1.025e-02}  &  \num{ 1.01}  \\
      &   160  &  \num{6.484e-03}  &  \num{ 0.74}  &  \num{6.452e-03}  &  \num{ 0.76}  &  \num{6.507e-03}  &  \num{ 0.77}  &  \num{7.096e-03}  &  \num{ 0.71}  &  \num{7.096e-03}  &  \num{ 0.71}  &  \num{6.452e-03}  &  \num{ 0.76}  &  \num{6.422e-03}  &  \num{ 0.77}  &  \num{6.676e-03}  &  \num{ 0.62}  \\
      &   320  &  \num{4.997e-03}  &  \num{ 0.38}  &  \num{4.971e-03}  &  \num{ 0.38}  &  \num{4.977e-03}  &  \num{ 0.39}  &  \num{5.319e-03}  &  \num{ 0.42}  &  \num{5.319e-03}  &  \num{ 0.42}  &  \num{4.971e-03}  &  \num{ 0.38}  &  \num{4.934e-03}  &  \num{ 0.38}  &  \num{5.150e-03}  &  \num{ 0.37}  \\
  \midrule 
   4  &    10  &  \num{3.978e-02}  &               &  \num{3.968e-02}  &               &  \num{4.006e-02}  &               &  \num{4.177e-02}  &               &  \num{4.177e-02}  &               &  \num{3.968e-02}  &               &  \num{3.943e-02}  &               &  \num{3.696e-02}  &               \\
      &    20  &  \num{1.320e-02}  &  \num{ 1.59}  &  \num{1.286e-02}  &  \num{ 1.63}  &  \num{1.289e-02}  &  \num{ 1.64}  &  \num{1.459e-02}  &  \num{ 1.52}  &  \num{1.459e-02}  &  \num{ 1.52}  &  \num{1.286e-02}  &  \num{ 1.63}  &  \num{1.274e-02}  &  \num{ 1.63}  &  \num{1.350e-02}  &  \num{ 1.45}  \\
      &    40  &  \num{1.058e-02}  &  \num{ 0.32}  &  \num{1.031e-02}  &  \num{ 0.32}  &  \num{1.023e-02}  &  \num{ 0.33}  &  \num{1.178e-02}  &  \num{ 0.31}  &  \num{1.178e-02}  &  \num{ 0.31}  &  \num{1.031e-02}  &  \num{ 0.32}  &  \num{1.046e-02}  &  \num{ 0.29}  &  \num{1.135e-02}  &  \num{ 0.25}  \\
      &    80  &  \num{1.303e-02}  &  \num{-0.30}  &  \num{1.266e-02}  &  \num{-0.30}  &  \num{1.281e-02}  &  \num{-0.32}  &  \num{1.267e-02}  &  \num{-0.10}  &  \num{1.267e-02}  &  \num{-0.10}  &  \num{1.266e-02}  &  \num{-0.30}  &  \num{1.263e-02}  &  \num{-0.27}  &  \num{1.269e-02}  &  \num{-0.16}  \\
      &   160  &  \num{7.097e-03}  &  \num{ 0.88}  &  \num{6.948e-03}  &  \num{ 0.87}  &  \num{6.945e-03}  &  \num{ 0.88}  &  \num{7.467e-03}  &  \num{ 0.76}  &  \num{7.467e-03}  &  \num{ 0.76}  &  \num{6.948e-03}  &  \num{ 0.87}  &  \num{6.965e-03}  &  \num{ 0.86}  &  \num{7.383e-03}  &  \num{ 0.78}  \\
      &   320  &  \num{4.591e-03}  &  \num{ 0.63}  &  \num{4.493e-03}  &  \num{ 0.63}  &  \num{4.463e-03}  &  \num{ 0.64}  &  \num{4.897e-03}  &  \num{ 0.61}  &  \num{4.897e-03}  &  \num{ 0.61}  &  \num{4.493e-03}  &  \num{ 0.63}  &  \num{4.475e-03}  &  \num{ 0.64}  &  \num{4.848e-03}  &  \num{ 0.61}  \\
  \midrule 
   5  &    10  &  \num{2.968e-02}  &               &  \num{2.872e-02}  &               &  \num{2.884e-02}  &               &  \num{2.866e-02}  &               &  \num{2.866e-02}  &               &  \num{2.872e-02}  &               &  \num{2.856e-02}  &               &  \num{2.701e-02}  &               \\
      &    20  &  \num{2.238e-02}  &  \num{ 0.41}  &  \num{2.191e-02}  &  \num{ 0.39}  &  \num{2.243e-02}  &  \num{ 0.36}  &  \num{2.466e-02}  &  \num{ 0.22}  &  \num{2.466e-02}  &  \num{ 0.22}  &  \num{2.191e-02}  &  \num{ 0.39}  &  \num{2.208e-02}  &  \num{ 0.37}  &  \num{2.049e-02}  &  \num{ 0.40}  \\
      &    40  &  \num{1.100e-02}  &  \num{ 1.02}  &  \num{1.101e-02}  &  \num{ 0.99}  &  \num{1.154e-02}  &  \num{ 0.96}  &  \num{1.277e-02}  &  \num{ 0.95}  &  \num{1.277e-02}  &  \num{ 0.95}  &  \num{1.101e-02}  &  \num{ 0.99}  &  \num{1.109e-02}  &  \num{ 0.99}  &  \num{1.029e-02}  &  \num{ 0.99}  \\
      &    80  &  \num{6.344e-03}  &  \num{ 0.79}  &  \num{6.223e-03}  &  \num{ 0.82}  &  \num{6.364e-03}  &  \num{ 0.86}  &  \num{7.393e-03}  &  \num{ 0.79}  &  \num{7.393e-03}  &  \num{ 0.79}  &  \num{6.223e-03}  &  \num{ 0.82}  &  \num{6.240e-03}  &  \num{ 0.83}  &  \num{6.484e-03}  &  \num{ 0.67}  \\
      &   160  &  \num{5.560e-03}  &  \num{ 0.19}  &  \num{5.394e-03}  &  \num{ 0.21}  &  \num{5.425e-03}  &  \num{ 0.23}  &  \num{6.103e-03}  &  \num{ 0.28}  &  \num{6.103e-03}  &  \num{ 0.28}  &  \num{5.394e-03}  &  \num{ 0.21}  &  \num{5.461e-03}  &  \num{ 0.19}  &  \num{5.728e-03}  &  \num{ 0.18}  \\
      &   320  &  \num{4.655e-03}  &  \num{ 0.26}  &  \num{4.429e-03}  &  \num{ 0.28}  &  \num{4.367e-03}  &  \num{ 0.31}  &  \num{4.920e-03}  &  \num{ 0.31}  &  \num{4.920e-03}  &  \num{ 0.31}  &  \num{4.429e-03}  &  \num{ 0.28}  &  \num{4.504e-03}  &  \num{ 0.28}  &  \num{4.960e-03}  &  \num{ 0.21}  \\
  \bottomrule
\end{tabular}
\end{center}
\end{sidewaystable}

\begin{sidewaystable}
\small
\caption{Errors and experimental order of convergence (EOC) for varying polynomial
         degrees $p$ and number of elements $N$ for the Sod shock tube problem
         \eqref{eq:sod} using the local Lax-Friedrichs numerical flux and entropy
         conserving volume fluxes.}
\label{tab:sod-LLF}
\sisetup{
  output-exponent-marker=\text{e},
  round-mode=places,
  round-precision=2
}
\begin{center}
\begin{tabular}{rr | rr|rr|rr|rr|rr|rr|rr|rr}
  \toprule
  & &  \multicolumn{2}{c}{IR}
    &  \multicolumn{2}{c}{Ch} 
    &  \multicolumn{2}{c}{$\rho,v,\beta$ (2)}
    &  \multicolumn{2}{c}{$\rho,v,\frac{1}{p}$}
    &  \multicolumn{2}{c}{$\rho,v,p$}
    &  \multicolumn{2}{c}{$\rho,v,T$ (1)}
    &  \multicolumn{2}{c}{$\rho,v,T$ (2)} 
    &  \multicolumn{2}{c}{$\rho,V,T$ (rev)}
  \\
  $p$ & $N$
    & $\norm{\mathrm{err_\rho}}_M$ & EOC
    & $\norm{\mathrm{err_\rho}}_M$ & EOC
    & $\norm{\mathrm{err_\rho}}_M$ & EOC
    & $\norm{\mathrm{err_\rho}}_M$ & EOC
    & $\norm{\mathrm{err_\rho}}_M$ & EOC
    & $\norm{\mathrm{err_\rho}}_M$ & EOC
    & $\norm{\mathrm{err_\rho}}_M$ & EOC
    & $\norm{\mathrm{err_\rho}}_M$ & EOC
  \\
  \midrule
   1  &    10  &  \num{1.019e-01}  &               &  \num{1.024e-01}  &               &  \num{1.024e-01}  &               &  \num{1.219e-01}  &               &  \num{1.219e-01}  &               &  \num{1.024e-01}  &               &  \num{1.027e-01}  &               &  \num{1.041e-01}  &               \\
      &    20  &  \num{6.942e-02}  &  \num{ 0.55}  &  \num{6.873e-02}  &  \num{ 0.58}  &  \num{6.943e-02}  &  \num{ 0.56}  &  \num{7.250e-02}  &  \num{ 0.75}  &  \num{7.250e-02}  &  \num{ 0.75}  &  \num{6.873e-02}  &  \num{ 0.58}  &  \num{6.955e-02}  &  \num{ 0.56}  &  \num{7.183e-02}  &  \num{ 0.53}  \\
      &    40  &  \num{5.128e-02}  &  \num{ 0.44}  &  \num{5.215e-02}  &  \num{ 0.40}  &  \num{5.500e-02}  &  \num{ 0.34}  &  \num{4.607e-02}  &  \num{ 0.65}  &  \num{4.607e-02}  &  \num{ 0.65}  &  \num{5.215e-02}  &  \num{ 0.40}  &  \num{5.142e-02}  &  \num{ 0.44}  &  \num{4.996e-02}  &  \num{ 0.52}  \\
      &    80  &  \num{3.745e-02}  &  \num{ 0.45}  &  \num{3.806e-02}  &  \num{ 0.45}  &  \num{3.995e-02}  &  \num{ 0.46}  &  \num{3.528e-02}  &  \num{ 0.38}  &  \num{3.528e-02}  &  \num{ 0.38}  &  \num{3.806e-02}  &  \num{ 0.45}  &  \num{3.822e-02}  &  \num{ 0.43}  &  \num{3.823e-02}  &  \num{ 0.39}  \\
      &   160  &  \num{2.828e-02}  &  \num{ 0.40}  &  \num{2.862e-02}  &  \num{ 0.41}  &  \num{3.051e-02}  &  \num{ 0.39}  &  \num{2.602e-02}  &  \num{ 0.44}  &  \num{2.602e-02}  &  \num{ 0.44}  &  \num{2.862e-02}  &  \num{ 0.41}  &  \num{2.895e-02}  &  \num{ 0.40}  &  \num{2.896e-02}  &  \num{ 0.40}  \\
      &   320  &  \num{2.104e-02}  &  \num{ 0.43}  &  \num{2.103e-02}  &  \num{ 0.44}  &  \num{2.187e-02}  &  \num{ 0.48}  &  \num{1.942e-02}  &  \num{ 0.42}  &  \num{1.942e-02}  &  \num{ 0.42}  &  \num{2.103e-02}  &  \num{ 0.44}  &  \num{2.132e-02}  &  \num{ 0.44}  &  \num{2.168e-02}  &  \num{ 0.42}  \\
  \midrule 
   2  &    10  &  \num{6.446e-02}  &               &  \num{6.301e-02}  &               &  \num{6.255e-02}  &               &  \num{6.322e-02}  &               &  \num{6.322e-02}  &               &  \num{6.301e-02}  &               &  \num{6.369e-02}  &               &  \num{6.756e-02}  &               \\
      &    20  &  \num{3.662e-02}  &  \num{ 0.82}  &  \num{3.647e-02}  &  \num{ 0.79}  &  \num{3.608e-02}  &  \num{ 0.79}  &  \num{3.854e-02}  &  \num{ 0.71}  &  \num{3.854e-02}  &  \num{ 0.71}  &  \num{3.647e-02}  &  \num{ 0.79}  &  \num{3.582e-02}  &  \num{ 0.83}  &  \num{3.520e-02}  &  \num{ 0.94}  \\
      &    40  &  \num{3.006e-02}  &  \num{ 0.29}  &  \num{3.036e-02}  &  \num{ 0.26}  &  \num{3.040e-02}  &  \num{ 0.25}  &  \num{2.922e-02}  &  \num{ 0.40}  &  \num{2.922e-02}  &  \num{ 0.40}  &  \num{3.036e-02}  &  \num{ 0.26}  &  \num{3.008e-02}  &  \num{ 0.25}  &  \num{3.005e-02}  &  \num{ 0.23}  \\
      &    80  &  \num{2.186e-02}  &  \num{ 0.46}  &  \num{2.183e-02}  &  \num{ 0.48}  &  \num{2.183e-02}  &  \num{ 0.48}  &  \num{2.187e-02}  &  \num{ 0.42}  &  \num{2.187e-02}  &  \num{ 0.42}  &  \num{2.183e-02}  &  \num{ 0.48}  &  \num{2.163e-02}  &  \num{ 0.48}  &  \num{2.163e-02}  &  \num{ 0.47}  \\
      &   160  &  \num{1.292e-02}  &  \num{ 0.76}  &  \num{1.288e-02}  &  \num{ 0.76}  &  \num{1.288e-02}  &  \num{ 0.76}  &  \num{1.265e-02}  &  \num{ 0.79}  &  \num{1.265e-02}  &  \num{ 0.79}  &  \num{1.288e-02}  &  \num{ 0.76}  &  \num{1.289e-02}  &  \num{ 0.75}  &  \num{1.331e-02}  &  \num{ 0.70}  \\
      &   320  &  \num{6.913e-03}  &  \num{ 0.90}  &  \num{6.891e-03}  &  \num{ 0.90}  &  \num{6.901e-03}  &  \num{ 0.90}  &  \num{7.122e-03}  &  \num{ 0.83}  &  \num{7.122e-03}  &  \num{ 0.83}  &  \num{6.891e-03}  &  \num{ 0.90}  &  \num{6.868e-03}  &  \num{ 0.91}  &  \num{6.977e-03}  &  \num{ 0.93}  \\
  \midrule 
   3  &    10  &  \num{3.480e-02}  &               &  \num{3.481e-02}  &               &  \num{3.504e-02}  &               &  \num{3.789e-02}  &               &  \num{3.789e-02}  &               &  \num{3.481e-02}  &               &  \num{3.409e-02}  &               &  \num{3.295e-02}  &               \\
      &    20  &  \num{3.096e-02}  &  \num{ 0.17}  &  \num{3.085e-02}  &  \num{ 0.17}  &  \num{3.074e-02}  &  \num{ 0.19}  &  \num{3.207e-02}  &  \num{ 0.24}  &  \num{3.207e-02}  &  \num{ 0.24}  &  \num{3.085e-02}  &  \num{ 0.17}  &  \num{3.015e-02}  &  \num{ 0.18}  &  \num{3.037e-02}  &  \num{ 0.12}  \\
      &    40  &  \num{2.188e-02}  &  \num{ 0.50}  &  \num{2.159e-02}  &  \num{ 0.52}  &  \num{2.165e-02}  &  \num{ 0.51}  &  \num{2.851e-02}  &  \num{ 0.17}  &  \num{2.851e-02}  &  \num{ 0.17}  &  \num{2.159e-02}  &  \num{ 0.52}  &  \num{2.127e-02}  &  \num{ 0.50}  &  \num{2.088e-02}  &  \num{ 0.54}  \\
      &    80  &  \num{1.162e-02}  &  \num{ 0.91}  &  \num{1.167e-02}  &  \num{ 0.89}  &  \num{1.159e-02}  &  \num{ 0.90}  &  \num{2.155e-02}  &  \num{ 0.40}  &  \num{2.155e-02}  &  \num{ 0.40}  &  \num{1.167e-02}  &  \num{ 0.89}  &  \num{1.142e-02}  &  \num{ 0.90}  &  \num{1.086e-02}  &  \num{ 0.94}  \\
      &   160  &  \num{7.225e-03}  &  \num{ 0.69}  &  \num{7.094e-03}  &  \num{ 0.72}  &  \num{6.929e-03}  &  \num{ 0.74}  &  \num{1.997e-02}  &  \num{ 0.11}  &  \num{1.997e-02}  &  \num{ 0.11}  &  \num{7.094e-03}  &  \num{ 0.72}  &  \num{6.872e-03}  &  \num{ 0.73}  &  \num{7.671e-03}  &  \num{ 0.50}  \\
      &   320  &  \num{5.773e-03}  &  \num{ 0.32}  &  \num{5.632e-03}  &  \num{ 0.33}  &  \num{5.497e-03}  &  \num{ 0.33}  &  \num{1.595e-02}  &  \num{ 0.32}  &  \num{1.595e-02}  &  \num{ 0.32}  &  \num{5.632e-03}  &  \num{ 0.33}  &  \num{5.523e-03}  &  \num{ 0.32}  &  \num{6.361e-03}  &  \num{ 0.27}  \\
  \midrule 
   4  &    10  &  \num{4.033e-02}  &               &  \num{4.027e-02}  &               &  \num{4.037e-02}  &               &  \num{4.239e-02}  &               &  \num{4.239e-02}  &               &  \num{4.027e-02}  &               &  \num{3.979e-02}  &               &  \num{3.789e-02}  &               \\
      &    20  &  \num{1.425e-02}  &  \num{ 1.50}  &  \num{1.402e-02}  &  \num{ 1.52}  &  \num{1.345e-02}  &  \num{ 1.59}  &  \num{1.963e-02}  &  \num{ 1.11}  &  \num{1.963e-02}  &  \num{ 1.11}  &  \num{1.402e-02}  &  \num{ 1.52}  &  \num{1.316e-02}  &  \num{ 1.60}  &  \num{1.372e-02}  &  \num{ 1.47}  \\
      &    40  &  \num{1.054e-02}  &  \num{ 0.44}  &  \num{1.012e-02}  &  \num{ 0.47}  &  \num{1.000e-02}  &  \num{ 0.43}  &  \num{2.396e-02}  &  \num{-0.29}  &  \num{2.396e-02}  &  \num{-0.29}  &  \num{1.012e-02}  &  \num{ 0.47}  &  \num{1.023e-02}  &  \num{ 0.36}  &  \num{1.163e-02}  &  \num{ 0.24}  \\
      &    80  &  \num{1.300e-02}  &  \num{-0.30}  &  \num{1.276e-02}  &  \num{-0.34}  &  \num{1.286e-02}  &  \num{-0.36}  &  \num{3.481e-02}  &  \num{-0.54}  &  \num{3.481e-02}  &  \num{-0.54}  &  \num{1.276e-02}  &  \num{-0.34}  &  \num{1.269e-02}  &  \num{-0.31}  &  \num{1.248e-02}  &  \num{-0.10}  \\
      &   160  &  \num{7.429e-03}  &  \num{ 0.81}  &  \num{7.275e-03}  &  \num{ 0.81}  &  \num{7.274e-03}  &  \num{ 0.82}  &  \num{2.583e-02}  &  \num{ 0.43}  &  \num{2.583e-02}  &  \num{ 0.43}  &  \num{7.275e-03}  &  \num{ 0.81}  &  \num{7.321e-03}  &  \num{ 0.79}  &  \num{7.866e-03}  &  \num{ 0.67}  \\
      &   320  &  \num{4.904e-03}  &  \num{ 0.60}  &  \num{4.781e-03}  &  \num{ 0.61}  &  \num{4.708e-03}  &  \num{ 0.63}  &  \num{1.767e-02}  &  \num{ 0.55}  &  \num{1.767e-02}  &  \num{ 0.55}  &  \num{4.781e-03}  &  \num{ 0.61}  &  \num{4.795e-03}  &  \num{ 0.61}  &  \num{5.486e-03}  &  \num{ 0.52}  \\
  \midrule 
   5  &    10  &  \num{3.017e-02}  &               &  \num{2.929e-02}  &               &  \num{2.935e-02}  &               &  \num{3.016e-02}  &               &  \num{3.016e-02}  &               &  \num{2.929e-02}  &               &  \num{2.914e-02}  &               &  \num{2.728e-02}  &               \\
      &    20  &  \num{2.305e-02}  &  \num{ 0.39}  &  \num{2.264e-02}  &  \num{ 0.37}  &  \num{2.265e-02}  &  \num{ 0.37}  &  \num{3.052e-02}  &  \num{-0.02}  &  \num{3.052e-02}  &  \num{-0.02}  &  \num{2.264e-02}  &  \num{ 0.37}  &  \num{2.236e-02}  &  \num{ 0.38}  &  \num{2.110e-02}  &  \num{ 0.37}  \\
      &    40  &  \num{1.195e-02}  &  \num{ 0.95}  &  \num{1.168e-02}  &  \num{ 0.96}  &  \num{1.196e-02}  &  \num{ 0.92}  &  \num{3.241e-02}  &  \num{-0.09}  &  \num{3.241e-02}  &  \num{-0.09}  &  \num{1.168e-02}  &  \num{ 0.96}  &  \num{1.188e-02}  &  \num{ 0.91}  &  \num{1.082e-02}  &  \num{ 0.96}  \\
      &    80  &  \num{7.294e-03}  &  \num{ 0.71}  &  \num{6.822e-03}  &  \num{ 0.78}  &  \num{6.781e-03}  &  \num{ 0.82}  &  \num{2.853e-02}  &  \num{ 0.18}  &  \num{2.853e-02}  &  \num{ 0.18}  &  \num{6.822e-03}  &  \num{ 0.78}  &  \num{7.084e-03}  &  \num{ 0.75}  &  \num{7.138e-03}  &  \num{ 0.60}  \\
      &   160  &  \num{6.479e-03}  &  \num{ 0.17}  &  \num{6.035e-03}  &  \num{ 0.18}  &  \num{6.009e-03}  &  \num{ 0.17}  &  \num{1.808e-02}  &  \num{ 0.66}  &  \num{1.808e-02}  &  \num{ 0.66}  &  \num{6.035e-03}  &  \num{ 0.18}  &  \num{6.343e-03}  &  \num{ 0.16}  &  \num{6.500e-03}  &  \num{ 0.14}  \\
      &   320  &  \num{5.407e-03}  &  \num{ 0.26}  &  \num{4.922e-03}  &  \num{ 0.29}  &  \num{4.820e-03}  &  \num{ 0.32}  &  \num{1.203e-02}  &  \num{ 0.59}  &  \num{1.203e-02}  &  \num{ 0.59}  &  \num{4.921e-03}  &  \num{ 0.29}  &  \num{5.224e-03}  &  \num{ 0.28}  &  \num{5.658e-03}  &  \num{ 0.20}  \\
  \bottomrule
\end{tabular}
\end{center}
\end{sidewaystable}

\begin{table}
\small
\caption{Errors and experimental order of convergence (EOC) for varying polynomial
         degrees $p$ and number of elements $N$ for the Sod shock tube problem
         \eqref{eq:sod} using the Suliciu relaxation solver as numerical flux
         and "simple" volume fluxes.}
\label{tab:sod-nonEC-suliciu}
\sisetup{
  output-exponent-marker=\text{e},
  round-mode=places,
  round-precision=2
}
\begin{center}
\begin{tabular}{rr | rr|rr|rr|rr|rr}
  \toprule
  & &  \multicolumn{2}{c}{Central}
    &  \multicolumn{2}{c}{Morinishi} 
    &  \multicolumn{2}{c}{Ducros}
    &  \multicolumn{2}{c}{KG}
    &  \multicolumn{2}{c}{Pirozzoli}
  \\
  $p$ & $N$
    & $\norm{\mathrm{err_\rho}}_M$ & EOC
    & $\norm{\mathrm{err_\rho}}_M$ & EOC
    & $\norm{\mathrm{err_\rho}}_M$ & EOC
    & $\norm{\mathrm{err_\rho}}_M$ & EOC
    & $\norm{\mathrm{err_\rho}}_M$ & EOC
  \\
  \midrule
   1  &    10  &           $*$     &               &           $*$     &               &  \num{1.043e-01}  &               &  \num{1.047e-01}  &               &  \num{1.056e-01}  &               \\
      &    20  &           $*$     &        $*$    &           $*$     &        $*$    &  \num{6.664e-02}  &  \num{ 0.65}  &  \num{6.636e-02}  &  \num{ 0.66}  &  \num{6.683e-02}  &  \num{ 0.66}  \\
      &    40  &           $*$     &        $*$    &           $*$     &        $*$    &  \num{4.199e-02}  &  \num{ 0.67}  &  \num{4.506e-02}  &  \num{ 0.56}  &  \num{4.594e-02}  &  \num{ 0.54}  \\
      &    80  &           $*$     &        $*$    &           $*$     &        $*$    &  \num{3.262e-02}  &  \num{ 0.36}  &  \num{3.476e-02}  &  \num{ 0.37}  &  \num{3.498e-02}  &  \num{ 0.39}  \\
      &   160  &           $*$     &        $*$    &           $*$     &        $*$    &  \num{2.450e-02}  &  \num{ 0.41}  &  \num{2.589e-02}  &  \num{ 0.43}  &  \num{2.584e-02}  &  \num{ 0.44}  \\
      &   320  &           $*$     &        $*$    &           $*$     &        $*$    &  \num{1.855e-02}  &  \num{ 0.40}  &  \num{1.953e-02}  &  \num{ 0.41}  &  \num{1.954e-02}  &  \num{ 0.40}  \\
  \midrule 
   2  &    10  &           $*$     &               &  \num{8.744e-02}  &               &  \num{6.382e-02}  &               &  \num{6.404e-02}  &               &  \num{6.393e-02}  &               \\
      &    20  &           $*$     &        $*$    &           $*$     &        $*$    &  \num{3.355e-02}  &  \num{ 0.93}  &  \num{3.405e-02}  &  \num{ 0.91}  &  \num{3.413e-02}  &  \num{ 0.91}  \\
      &    40  &           $*$     &        $*$    &           $*$     &        $*$    &  \num{2.556e-02}  &  \num{ 0.39}  &  \num{2.656e-02}  &  \num{ 0.36}  &  \num{2.688e-02}  &  \num{ 0.34}  \\
      &    80  &           $*$     &        $*$    &           $*$     &        $*$    &  \num{1.927e-02}  &  \num{ 0.41}  &  \num{1.997e-02}  &  \num{ 0.41}  &  \num{1.999e-02}  &  \num{ 0.43}  \\
      &   160  &           $*$     &        $*$    &           $*$     &        $*$    &  \num{1.177e-02}  &  \num{ 0.71}  &  \num{1.219e-02}  &  \num{ 0.71}  &  \num{1.238e-02}  &  \num{ 0.69}  \\
      &   320  &           $*$     &        $*$    &           $*$     &        $*$    &  \num{6.324e-03}  &  \num{ 0.90}  &  \num{6.471e-03}  &  \num{ 0.91}  &  \num{6.546e-03}  &  \num{ 0.92}  \\
  \midrule 
   3  &    10  &           $*$     &               &           $*$     &               &  \num{3.288e-02}  &               &  \num{3.300e-02}  &               &  \num{3.328e-02}  &               \\
      &    20  &           $*$     &        $*$    &           $*$     &        $*$    &  \num{2.506e-02}  &  \num{ 0.39}  &  \num{2.562e-02}  &  \num{ 0.36}  &  \num{2.600e-02}  &  \num{ 0.36}  \\
      &    40  &           $*$     &        $*$    &           $*$     &        $*$    &  \num{1.743e-02}  &  \num{ 0.52}  &  \num{1.934e-02}  &  \num{ 0.41}  &  \num{1.933e-02}  &  \num{ 0.43}  \\
      &    80  &           $*$     &        $*$    &           $*$     &        $*$    &  \num{9.934e-03}  &  \num{ 0.81}  &  \num{9.892e-03}  &  \num{ 0.97}  &  \num{9.791e-03}  &  \num{ 0.98}  \\
      &   160  &           $*$     &        $*$    &           $*$     &        $*$    &  \num{6.045e-03}  &  \num{ 0.72}  &  \num{5.973e-03}  &  \num{ 0.73}  &  \num{6.038e-03}  &  \num{ 0.70}  \\
      &   320  &           $*$     &        $*$    &           $*$     &        $*$    &  \num{5.026e-03}  &  \num{ 0.27}  &  \num{4.826e-03}  &  \num{ 0.31}  &  \num{4.850e-03}  &  \num{ 0.32}  \\
  \midrule 
   4  &    10  &           $*$     &               &           $*$     &               &  \num{3.480e-02}  &               &  \num{3.620e-02}  &               &  \num{3.627e-02}  &               \\
      &    20  &           $*$     &        $*$    &           $*$     &        $*$    &  \num{1.347e-02}  &  \num{ 1.37}  &  \num{1.260e-02}  &  \num{ 1.52}  &  \num{1.273e-02}  &  \num{ 1.51}  \\
      &    40  &           $*$     &        $*$    &           $*$     &        $*$    &  \num{1.212e-02}  &  \num{ 0.15}  &  \num{1.165e-02}  &  \num{ 0.11}  &  \num{1.168e-02}  &  \num{ 0.12}  \\
      &    80  &           $*$     &        $*$    &           $*$     &        $*$    &  \num{1.033e-02}  &  \num{ 0.23}  &  \num{1.128e-02}  &  \num{ 0.05}  &  \num{1.146e-02}  &  \num{ 0.03}  \\
      &   160  &           $*$     &        $*$    &           $*$     &        $*$    &  \num{7.134e-03}  &  \num{ 0.53}  &  \num{7.041e-03}  &  \num{ 0.68}  &  \num{7.151e-03}  &  \num{ 0.68}  \\
      &   320  &           $*$     &        $*$    &           $*$     &        $*$    &  \num{5.171e-03}  &  \num{ 0.46}  &  \num{4.767e-03}  &  \num{ 0.56}  &  \num{4.738e-03}  &  \num{ 0.59}  \\
  \midrule 
   5  &    10  &           $*$     &               &           $*$     &               &           $*$     &               &  \num{2.530e-02}  &               &  \num{2.585e-02}  &               \\
      &    20  &           $*$     &        $*$    &           $*$     &        $*$    &           $*$     &        $*$    &  \num{1.917e-02}  &  \num{ 0.40}  &  \num{1.922e-02}  &  \num{ 0.43}  \\
      &    40  &           $*$     &        $*$    &           $*$     &        $*$    &           $*$     &        $*$    &  \num{9.554e-03}  &  \num{ 1.00}  &  \num{9.396e-03}  &  \num{ 1.03}  \\
      &    80  &           $*$     &        $*$    &           $*$     &        $*$    &           $*$     &        $*$    &  \num{6.474e-03}  &  \num{ 0.56}  &  \num{6.467e-03}  &  \num{ 0.54}  \\
      &   160  &           $*$     &        $*$    &           $*$     &        $*$    &           $*$     &        $*$    &  \num{5.681e-03}  &  \num{ 0.19}  &  \num{5.680e-03}  &  \num{ 0.19}  \\
      &   320  &           $*$     &        $*$    &           $*$     &        $*$    &           $*$     &        $*$    &  \num{5.240e-03}  &  \num{ 0.12}  &  \num{5.187e-03}  &  \num{ 0.13}  \\
  \bottomrule
\end{tabular}
\end{center}
\end{table}

\begin{table}
\small
\caption{Errors and experimental order of convergence (EOC) for varying polynomial
         degrees $p$ and number of elements $N$ for the Sod shock tube problem
         \eqref{eq:sod} using the local Lax-Friedrichs numerical flux
         and "simple" volume fluxes.}
\label{tab:sod-nonEC-LLF}
\sisetup{
  output-exponent-marker=\text{e},
  round-mode=places,
  round-precision=2
}
\begin{center}
\begin{tabular}{rr | rr|rr|rr|rr|rr}
  \toprule
  & &  \multicolumn{2}{c}{Central}
    &  \multicolumn{2}{c}{Morinishi} 
    &  \multicolumn{2}{c}{Ducros}
    &  \multicolumn{2}{c}{KG}
    &  \multicolumn{2}{c}{Pirozzoli}
  \\
  $p$ & $N$
    & $\norm{\mathrm{err_\rho}}_M$ & EOC
    & $\norm{\mathrm{err_\rho}}_M$ & EOC
    & $\norm{\mathrm{err_\rho}}_M$ & EOC
    & $\norm{\mathrm{err_\rho}}_M$ & EOC
    & $\norm{\mathrm{err_\rho}}_M$ & EOC
  \\
  \midrule 
   1  &    10  &           $*$     &               &           $*$     &               &  \num{1.031e-01}  &               &  \num{1.020e-01}  &               &  \num{1.020e-01}  &               \\
      &    20  &           $*$     &        $*$    &           $*$     &        $*$    &  \num{6.744e-02}  &  \num{ 0.61}  &  \num{6.746e-02}  &  \num{ 0.60}  &  \num{6.748e-02}  &  \num{ 0.60}  \\
      &    40  &           $*$     &        $*$    &           $*$     &        $*$    &  \num{4.380e-02}  &  \num{ 0.62}  &  \num{4.817e-02}  &  \num{ 0.49}  &  \num{4.935e-02}  &  \num{ 0.45}  \\
      &    80  &           $*$     &        $*$    &           $*$     &        $*$    &  \num{3.382e-02}  &  \num{ 0.37}  &  \num{3.628e-02}  &  \num{ 0.41}  &  \num{3.678e-02}  &  \num{ 0.42}  \\
      &   160  &           $*$     &        $*$    &           $*$     &        $*$    &  \num{2.595e-02}  &  \num{ 0.38}  &  \num{2.763e-02}  &  \num{ 0.39}  &  \num{2.771e-02}  &  \num{ 0.41}  \\
      &   320  &           $*$     &        $*$    &           $*$     &        $*$    &           $*$     &        $*$    &  \num{2.091e-02}  &  \num{ 0.40}  &  \num{2.081e-02}  &  \num{ 0.41}  \\
  \midrule 
   2  &    10  &  \num{8.353e-02}  &               &  \num{5.868e-02}  &               &  \num{6.596e-02}  &               &  \num{6.537e-02}  &               &  \num{6.462e-02}  &               \\
      &    20  &           $*$     &        $*$    &           $*$     &        $*$    &  \num{3.421e-02}  &  \num{ 0.95}  &  \num{3.515e-02}  &  \num{ 0.90}  &  \num{3.531e-02}  &  \num{ 0.87}  \\
      &    40  &           $*$     &        $*$    &           $*$     &        $*$    &  \num{2.672e-02}  &  \num{ 0.36}  &  \num{2.784e-02}  &  \num{ 0.34}  &  \num{2.818e-02}  &  \num{ 0.33}  \\
      &    80  &           $*$     &        $*$    &           $*$     &        $*$    &  \num{1.932e-02}  &  \num{ 0.47}  &  \num{2.016e-02}  &  \num{ 0.47}  &  \num{2.022e-02}  &  \num{ 0.48}  \\
      &   160  &           $*$     &        $*$    &           $*$     &        $*$    &  \num{1.235e-02}  &  \num{ 0.65}  &  \num{1.273e-02}  &  \num{ 0.66}  &  \num{1.291e-02}  &  \num{ 0.65}  \\
      &   320  &           $*$     &        $*$    &           $*$     &        $*$    &  \num{6.539e-03}  &  \num{ 0.92}  &  \num{6.655e-03}  &  \num{ 0.94}  &  \num{6.734e-03}  &  \num{ 0.94}  \\
  \midrule 
   3  &    10  &           $*$     &               &           $*$     &               &  \num{3.314e-02}  &               &  \num{3.437e-02}  &               &  \num{3.501e-02}  &               \\
      &    20  &           $*$     &        $*$    &           $*$     &        $*$    &  \num{2.583e-02}  &  \num{ 0.36}  &  \num{2.715e-02}  &  \num{ 0.34}  &  \num{2.775e-02}  &  \num{ 0.34}  \\
      &    40  &           $*$     &        $*$    &           $*$     &        $*$    &  \num{1.741e-02}  &  \num{ 0.57}  &  \num{1.926e-02}  &  \num{ 0.50}  &  \num{1.932e-02}  &  \num{ 0.52}  \\
      &    80  &           $*$     &        $*$    &           $*$     &        $*$    &  \num{1.045e-02}  &  \num{ 0.74}  &  \num{1.044e-02}  &  \num{ 0.88}  &  \num{1.040e-02}  &  \num{ 0.89}  \\
      &   160  &           $*$     &        $*$    &           $*$     &        $*$    &  \num{6.912e-03}  &  \num{ 0.60}  &  \num{6.567e-03}  &  \num{ 0.67}  &  \num{6.632e-03}  &  \num{ 0.65}  \\
      &   320  &           $*$     &        $*$    &           $*$     &        $*$    &  \num{6.075e-03}  &  \num{ 0.19}  &  \num{5.575e-03}  &  \num{ 0.24}  &  \num{5.589e-03}  &  \num{ 0.25}  \\
  \midrule 
   4  &    10  &           $*$     &               &           $*$     &               &  \num{3.485e-02}  &               &  \num{3.679e-02}  &               &  \num{3.700e-02}  &               \\
      &    20  &           $*$     &        $*$    &           $*$     &        $*$    &  \num{1.270e-02}  &  \num{ 1.46}  &  \num{1.231e-02}  &  \num{ 1.58}  &  \num{1.259e-02}  &  \num{ 1.56}  \\
      &    40  &           $*$     &        $*$    &           $*$     &        $*$    &  \num{1.179e-02}  &  \num{ 0.11}  &  \num{1.129e-02}  &  \num{ 0.13}  &  \num{1.137e-02}  &  \num{ 0.15}  \\
      &    80  &           $*$     &        $*$    &           $*$     &        $*$    &  \num{1.020e-02}  &  \num{ 0.21}  &  \num{1.126e-02}  &  \num{ 0.00}  &  \num{1.145e-02}  &  \num{-0.01}  \\
      &   160  &           $*$     &        $*$    &           $*$     &        $*$    &  \num{7.428e-03}  &  \num{ 0.46}  &  \num{7.490e-03}  &  \num{ 0.59}  &  \num{7.572e-03}  &  \num{ 0.60}  \\
      &   320  &           $*$     &        $*$    &           $*$     &        $*$    &  \num{5.469e-03}  &  \num{ 0.44}  &  \num{5.219e-03}  &  \num{ 0.52}  &  \num{5.150e-03}  &  \num{ 0.56}  \\
  \midrule 
   5  &    10  &           $*$     &               &           $*$     &               &           $*$     &               &  \num{2.567e-02}  &               &  \num{2.651e-02}  &               \\
      &    20  &           $*$     &        $*$    &           $*$     &        $*$    &           $*$     &        $*$    &  \num{1.934e-02}  &  \num{ 0.41}  &  \num{1.933e-02}  &  \num{ 0.46}  \\
      &    40  &           $*$     &        $*$    &           $*$     &        $*$    &           $*$     &        $*$    &  \num{1.114e-02}  &  \num{ 0.80}  &  \num{1.083e-02}  &  \num{ 0.84}  \\
      &    80  &           $*$     &        $*$    &           $*$     &        $*$    &           $*$     &        $*$    &  \num{7.682e-03}  &  \num{ 0.54}  &  \num{7.525e-03}  &  \num{ 0.52}  \\
      &   160  &           $*$     &        $*$    &           $*$     &        $*$    &           $*$     &        $*$    &  \num{6.941e-03}  &  \num{ 0.15}  &  \num{6.821e-03}  &  \num{ 0.14}  \\
      &   320  &           $*$     &        $*$    &           $*$     &        $*$    &           $*$     &        $*$    &  \num{6.348e-03}  &  \num{ 0.13}  &  \num{6.184e-03}  &  \num{ 0.14}  \\
  \bottomrule
\end{tabular}
\end{center}
\end{table}

\clearpage
\subsection{Modified Version of Sod's Shock Tube: Subcell Flux Differencing}
\label{sec:modsod-flux-diff}

In this section, the modified version of the shock tube of \citet{sod1978survey}
as described by \citet[Section 6.4, Test 1]{toro2009riemann} will be used to 
test the semidiscretisations \eqref{eq:semidiscretisation} using the volume
terms \eqref{eq:volume-terms} and the surface terms \eqref{eq:surface-terms}.
The initial condition is given in primitive variables by
\begin{equation}
\label{eq:modsod}
\begin{aligned}
  \rho_0(x) =
  \begin{cases}
    1,    & x < \frac{3}{10}, \\
    0.125 & \text{ else},
  \end{cases}
  ,\qquad
  v_0(x) = 
  \begin{cases}
    0.75, & x < \frac{3}{10}, \\
    0     & \text{ else},
  \end{cases}
  ,\qquad
  p_0(x) =
  \begin{cases}
    1,   & x < \frac{3}{10}, \\
    0.1, & \text{ else},
  \end{cases}
\end{aligned}
\end{equation}
and the conservative variables are again computed via $\rho v_0 = \rho_0 v_0$ and
$\rho e_0 = \frac{1}{2} \rho_0 v_0^2 + \frac{p_0}{\gamma-1}$.
The solution is computed on the domain $[0, 1]$ from $t = 0$ until $t = 0.2$
using \num{15000} steps and the error is computed as in section \ref{sec:sod-flux-diff}.

The results using the Suliciu relaxation solver and the local Lax-Friedrichs flux
for varying polynomial degrees $p$ and number of elements $N$ are shown in
Table \ref{tab:modsod-suliciu} and \ref{tab:modsod-LLF}, respectively.
Again, there are some variances across the volume fluxes of up to \num{33}{\%}
[e.g. $p=4$, $N=10$, $(\rho,v,p)$ vs. $(\rho,v,\beta (2))$] but no flux seems
to be clearly superior.
As in the previous test case in section \ref{sec:sod-flux-diff}, the Suliciu
solver performs better than the LLF flux -- at least, if the resolution is good
enough.

As in the previous section \ref{sec:sod-flux-diff}, the volume fluxes recovering
the central form as well as the split forms of \citet{morinishi2010skew},
\citet{ducros2000high}, \citet{kennedy2008reduced}, and \citet{pirozzoli2011numerical}
have been used. The relevant results are shown in the Tables
\ref{tab:modsod-nonEC-suliciu} (Suliciu) and \ref{tab:modsod-nonEC-LLF} (LLF).

Contrary to the results of the unmodified shock tube of Sod, all calculations
are stable for low resolution. The splitting of \citet{morinishi2010skew} blows
up at first if the Suliciu solver is used, while the central flux crashes at first
for the LLF flux. Moreover, even the splittings of \citet{kennedy2008reduced,
pirozzoli2011numerical} that remained stable in the previous section
\ref{sec:sod-flux-diff} blow up for higher polynomial degrees.

\begin{sidewaystable}
\small
\caption{Errors and experimental order of convergence (EOC) for varying polynomial
         degrees $p$ and number of elements $N$ for the modified Sod shock tube
         problem \eqref{eq:modsod} using the Suliciu relaxation solver as numerical
         flux and entropy conservative volume fluxes.}
\label{tab:modsod-suliciu}
\sisetup{
  output-exponent-marker=\text{e},
  round-mode=places,
  round-precision=2
}
\begin{center}
\begin{tabular}{rr | rr|rr|rr|rr|rr|rr|rr|rr}
  \toprule
  & &  \multicolumn{2}{c}{IR}
    &  \multicolumn{2}{c}{Ch} 
    &  \multicolumn{2}{c}{$\rho,v,\beta$ (2)}
    &  \multicolumn{2}{c}{$\rho,v,\frac{1}{p}$}
    &  \multicolumn{2}{c}{$\rho,v,p$}
    &  \multicolumn{2}{c}{$\rho,v,T$ (1)}
    &  \multicolumn{2}{c}{$\rho,v,T$ (2)} 
    &  \multicolumn{2}{c}{$\rho,V,T$ (rev)}
  \\
  $p$ & $N$
    & $\norm{\mathrm{err_\rho}}_M$ & EOC
    & $\norm{\mathrm{err_\rho}}_M$ & EOC
    & $\norm{\mathrm{err_\rho}}_M$ & EOC
    & $\norm{\mathrm{err_\rho}}_M$ & EOC
    & $\norm{\mathrm{err_\rho}}_M$ & EOC
    & $\norm{\mathrm{err_\rho}}_M$ & EOC
    & $\norm{\mathrm{err_\rho}}_M$ & EOC
    & $\norm{\mathrm{err_\rho}}_M$ & EOC
  \\
  \midrule
   1  &    10  &  \num{1.397e-01}  &               &  \num{1.421e-01}  &               &  \num{1.421e-01}  &               &  \num{1.434e-01}  &               &  \num{1.434e-01}  &               &  \num{1.421e-01}  &               &  \num{1.424e-01}  &               &  \num{1.418e-01}  &               \\
      &    20  &  \num{7.930e-02}  &  \num{ 0.82}  &  \num{7.737e-02}  &  \num{ 0.88}  &  \num{7.848e-02}  &  \num{ 0.86}  &  \num{8.823e-02}  &  \num{ 0.70}  &  \num{8.823e-02}  &  \num{ 0.70}  &  \num{7.737e-02}  &  \num{ 0.88}  &  \num{7.665e-02}  &  \num{ 0.89}  &  \num{7.549e-02}  &  \num{ 0.91}  \\
      &    40  &  \num{6.455e-02}  &  \num{ 0.30}  &  \num{6.486e-02}  &  \num{ 0.25}  &  \num{6.692e-02}  &  \num{ 0.23}  &  \num{6.126e-02}  &  \num{ 0.53}  &  \num{6.126e-02}  &  \num{ 0.53}  &  \num{6.486e-02}  &  \num{ 0.25}  &  \num{6.435e-02}  &  \num{ 0.25}  &  \num{6.486e-02}  &  \num{ 0.22}  \\
      &    80  &  \num{5.195e-02}  &  \num{ 0.31}  &  \num{5.241e-02}  &  \num{ 0.31}  &  \num{5.418e-02}  &  \num{ 0.30}  &  \num{5.271e-02}  &  \num{ 0.22}  &  \num{5.271e-02}  &  \num{ 0.22}  &  \num{5.241e-02}  &  \num{ 0.31}  &  \num{5.200e-02}  &  \num{ 0.31}  &  \num{5.202e-02}  &  \num{ 0.32}  \\
      &   160  &  \num{3.634e-02}  &  \num{ 0.52}  &  \num{3.672e-02}  &  \num{ 0.51}  &  \num{3.801e-02}  &  \num{ 0.51}  &  \num{3.585e-02}  &  \num{ 0.56}  &  \num{3.585e-02}  &  \num{ 0.56}  &  \num{3.672e-02}  &  \num{ 0.51}  &  \num{3.655e-02}  &  \num{ 0.51}  &  \num{3.653e-02}  &  \num{ 0.51}  \\
      &   320  &  \num{2.471e-02}  &  \num{ 0.56}  &  \num{2.484e-02}  &  \num{ 0.56}  &  \num{2.558e-02}  &  \num{ 0.57}  &  \num{2.433e-02}  &  \num{ 0.56}  &  \num{2.433e-02}  &  \num{ 0.56}  &  \num{2.484e-02}  &  \num{ 0.56}  &  \num{2.483e-02}  &  \num{ 0.56}  &  \num{2.511e-02}  &  \num{ 0.54}  \\
  \midrule 
   2  &    10  &  \num{8.308e-02}  &               &  \num{8.282e-02}  &               &  \num{8.312e-02}  &               &  \num{8.734e-02}  &               &  \num{8.734e-02}  &               &  \num{8.282e-02}  &               &  \num{8.181e-02}  &               &  \num{8.236e-02}  &               \\
      &    20  &  \num{4.893e-02}  &  \num{ 0.76}  &  \num{4.839e-02}  &  \num{ 0.78}  &  \num{4.802e-02}  &  \num{ 0.79}  &  \num{5.270e-02}  &  \num{ 0.73}  &  \num{5.270e-02}  &  \num{ 0.73}  &  \num{4.839e-02}  &  \num{ 0.78}  &  \num{4.764e-02}  &  \num{ 0.78}  &  \num{5.045e-02}  &  \num{ 0.71}  \\
      &    40  &  \num{4.841e-02}  &  \num{ 0.02}  &  \num{4.847e-02}  &  \num{-0.00}  &  \num{4.868e-02}  &  \num{-0.02}  &  \num{4.527e-02}  &  \num{ 0.22}  &  \num{4.527e-02}  &  \num{ 0.22}  &  \num{4.847e-02}  &  \num{-0.00}  &  \num{4.833e-02}  &  \num{-0.02}  &  \num{4.774e-02}  &  \num{ 0.08}  \\
      &    80  &  \num{2.172e-02}  &  \num{ 1.16}  &  \num{2.168e-02}  &  \num{ 1.16}  &  \num{2.177e-02}  &  \num{ 1.16}  &  \num{2.457e-02}  &  \num{ 0.88}  &  \num{2.457e-02}  &  \num{ 0.88}  &  \num{2.168e-02}  &  \num{ 1.16}  &  \num{2.131e-02}  &  \num{ 1.18}  &  \num{2.223e-02}  &  \num{ 1.10}  \\
      &   160  &  \num{1.881e-02}  &  \num{ 0.21}  &  \num{1.880e-02}  &  \num{ 0.21}  &  \num{1.860e-02}  &  \num{ 0.23}  &  \num{2.145e-02}  &  \num{ 0.20}  &  \num{2.145e-02}  &  \num{ 0.20}  &  \num{1.880e-02}  &  \num{ 0.21}  &  \num{1.868e-02}  &  \num{ 0.19}  &  \num{1.943e-02}  &  \num{ 0.19}  \\
      &   320  &  \num{1.547e-02}  &  \num{ 0.28}  &  \num{1.565e-02}  &  \num{ 0.26}  &  \num{1.584e-02}  &  \num{ 0.23}  &  \num{1.572e-02}  &  \num{ 0.45}  &  \num{1.572e-02}  &  \num{ 0.45}  &  \num{1.565e-02}  &  \num{ 0.26}  &  \num{1.558e-02}  &  \num{ 0.26}  &  \num{1.555e-02}  &  \num{ 0.32}  \\
  \midrule 
   3  &    10  &  \num{5.762e-02}  &               &  \num{5.531e-02}  &               &  \num{5.210e-02}  &               &  \num{6.956e-02}  &               &  \num{6.956e-02}  &               &  \num{5.531e-02}  &               &  \num{5.404e-02}  &               &  \num{5.960e-02}  &               \\
      &    20  &  \num{5.415e-02}  &  \num{ 0.09}  &  \num{5.258e-02}  &  \num{ 0.07}  &  \num{5.180e-02}  &  \num{ 0.01}  &  \num{6.150e-02}  &  \num{ 0.18}  &  \num{6.150e-02}  &  \num{ 0.18}  &  \num{5.258e-02}  &  \num{ 0.07}  &  \num{5.153e-02}  &  \num{ 0.07}  &  \num{5.236e-02}  &  \num{ 0.19}  \\
      &    40  &  \num{2.454e-02}  &  \num{ 1.14}  &  \num{2.289e-02}  &  \num{ 1.20}  &  \num{2.220e-02}  &  \num{ 1.22}  &  \num{3.975e-02}  &  \num{ 0.63}  &  \num{3.975e-02}  &  \num{ 0.63}  &  \num{2.289e-02}  &  \num{ 1.20}  &  \num{2.175e-02}  &  \num{ 1.24}  &  \num{2.770e-02}  &  \num{ 0.92}  \\
      &    80  &  \num{2.063e-02}  &  \num{ 0.25}  &  \num{1.935e-02}  &  \num{ 0.24}  &  \num{1.873e-02}  &  \num{ 0.24}  &  \num{3.168e-02}  &  \num{ 0.33}  &  \num{3.168e-02}  &  \num{ 0.33}  &  \num{1.935e-02}  &  \num{ 0.24}  &  \num{1.872e-02}  &  \num{ 0.22}  &  \num{2.387e-02}  &  \num{ 0.21}  \\
      &   160  &  \num{1.598e-02}  &  \num{ 0.37}  &  \num{1.616e-02}  &  \num{ 0.26}  &  \num{1.667e-02}  &  \num{ 0.17}  &  \num{2.772e-02}  &  \num{ 0.19}  &  \num{2.772e-02}  &  \num{ 0.19}  &  \num{1.616e-02}  &  \num{ 0.26}  &  \num{1.640e-02}  &  \num{ 0.19}  &  \num{1.558e-02}  &  \num{ 0.62}  \\
      &   320  &  \num{1.254e-02}  &  \num{ 0.35}  &  \num{1.223e-02}  &  \num{ 0.40}  &  \num{1.246e-02}  &  \num{ 0.42}  &  \num{2.353e-02}  &  \num{ 0.24}  &  \num{2.353e-02}  &  \num{ 0.24}  &  \num{1.223e-02}  &  \num{ 0.40}  &  \num{1.180e-02}  &  \num{ 0.48}  &  \num{1.434e-02}  &  \num{ 0.12}  \\
  \midrule 
   4  &    10  &  \num{4.216e-02}  &               &  \num{3.885e-02}  &               &  \num{3.787e-02}  &               &  \num{5.647e-02}  &               &  \num{5.647e-02}  &               &  \num{3.885e-02}  &               &  \num{3.820e-02}  &               &  \num{4.112e-02}  &               \\
      &    20  &  \num{3.597e-02}  &  \num{ 0.23}  &  \num{3.361e-02}  &  \num{ 0.21}  &  \num{3.421e-02}  &  \num{ 0.15}  &  \num{3.965e-02}  &  \num{ 0.51}  &  \num{3.965e-02}  &  \num{ 0.51}  &  \num{3.361e-02}  &  \num{ 0.21}  &  \num{3.347e-02}  &  \num{ 0.19}  &  \num{3.079e-02}  &  \num{ 0.42}  \\
      &    40  &  \num{2.741e-02}  &  \num{ 0.39}  &  \num{2.664e-02}  &  \num{ 0.33}  &  \num{2.714e-02}  &  \num{ 0.33}  &  \num{3.032e-02}  &  \num{ 0.39}  &  \num{3.032e-02}  &  \num{ 0.39}  &  \num{2.664e-02}  &  \num{ 0.33}  &  \num{2.635e-02}  &  \num{ 0.35}  &  \num{2.388e-02}  &  \num{ 0.37}  \\
      &    80  &  \num{2.330e-02}  &  \num{ 0.23}  &  \num{2.285e-02}  &  \num{ 0.22}  &  \num{2.332e-02}  &  \num{ 0.22}  &  \num{2.347e-02}  &  \num{ 0.37}  &  \num{2.347e-02}  &  \num{ 0.37}  &  \num{2.285e-02}  &  \num{ 0.22}  &  \num{2.273e-02}  &  \num{ 0.21}  &  \num{2.075e-02}  &  \num{ 0.20}  \\
      &   160  &  \num{1.533e-02}  &  \num{ 0.60}  &  \num{1.397e-02}  &  \num{ 0.71}  &  \num{1.355e-02}  &  \num{ 0.78}  &  \num{1.488e-02}  &  \num{ 0.66}  &  \num{1.488e-02}  &  \num{ 0.66}  &  \num{1.397e-02}  &  \num{ 0.71}  &  \num{1.312e-02}  &  \num{ 0.79}  &  \num{1.662e-02}  &  \num{ 0.32}  \\
      &   320  &  \num{1.274e-02}  &  \num{ 0.27}  &  \num{1.247e-02}  &  \num{ 0.16}  &  \num{1.260e-02}  &  \num{ 0.10}  &  \num{1.594e-02}  &  \num{-0.10}  &  \num{1.594e-02}  &  \num{-0.10}  &  \num{1.247e-02}  &  \num{ 0.16}  &  \num{1.204e-02}  &  \num{ 0.12}  &  \num{1.291e-02}  &  \num{ 0.36}  \\
  \midrule 
   5  &    10  &  \num{5.947e-02}  &               &  \num{5.845e-02}  &               &  \num{5.912e-02}  &               &  \num{6.268e-02}  &               &  \num{6.268e-02}  &               &  \num{5.845e-02}  &               &  \num{5.618e-02}  &               &  \num{5.569e-02}  &               \\
      &    20  &  \num{2.977e-02}  &  \num{ 1.00}  &  \num{2.840e-02}  &  \num{ 1.04}  &  \num{2.850e-02}  &  \num{ 1.05}  &  \num{3.348e-02}  &  \num{ 0.90}  &  \num{3.348e-02}  &  \num{ 0.90}  &  \num{2.840e-02}  &  \num{ 1.04}  &  \num{2.712e-02}  &  \num{ 1.05}  &  \num{2.792e-02}  &  \num{ 1.00}  \\
      &    40  &  \num{2.618e-02}  &  \num{ 0.19}  &  \num{2.240e-02}  &  \num{ 0.34}  &  \num{2.327e-02}  &  \num{ 0.29}  &  \num{2.666e-02}  &  \num{ 0.33}  &  \num{2.666e-02}  &  \num{ 0.33}  &  \num{2.240e-02}  &  \num{ 0.34}  &  \num{2.409e-02}  &  \num{ 0.17}  &  \num{2.517e-02}  &  \num{ 0.15}  \\
      &    80  &  \num{2.081e-02}  &  \num{ 0.33}  &  \num{2.030e-02}  &  \num{ 0.14}  &  \num{2.124e-02}  &  \num{ 0.13}  &  \num{2.167e-02}  &  \num{ 0.30}  &  \num{2.167e-02}  &  \num{ 0.30}  &  \num{2.030e-02}  &  \num{ 0.14}  &  \num{2.039e-02}  &  \num{ 0.24}  &  \num{1.925e-02}  &  \num{ 0.39}  \\
      &   160  &  \num{1.655e-02}  &  \num{ 0.33}  &  \num{1.609e-02}  &  \num{ 0.34}  &  \num{1.724e-02}  &  \num{ 0.30}  &  \num{1.851e-02}  &  \num{ 0.23}  &  \num{1.851e-02}  &  \num{ 0.23}  &  \num{1.609e-02}  &  \num{ 0.34}  &  \num{1.662e-02}  &  \num{ 0.29}  &  \num{1.520e-02}  &  \num{ 0.34}  \\
      &   320  &  \num{1.451e-02}  &  \num{ 0.19}  &  \num{1.432e-02}  &  \num{ 0.17}  &  \num{1.603e-02}  &  \num{ 0.10}  &  \num{1.956e-02}  &  \num{-0.08}  &  \num{1.956e-02}  &  \num{-0.08}  &  \num{1.432e-02}  &  \num{ 0.17}  &  \num{1.467e-02}  &  \num{ 0.18}  &  \num{1.332e-02}  &  \num{ 0.19}  \\
  \bottomrule
\end{tabular}
\end{center}
\end{sidewaystable}

\begin{sidewaystable}
\small
\caption{Errors and experimental order of convergence (EOC) for varying polynomial
         degrees $p$ and number of elements $N$ for the modified Sod shock tube
         problem \eqref{eq:modsod} using the local Lax-Friedrichs numerical
         flux and entropy conservative volume fluxes.}
\label{tab:modsod-LLF}
\sisetup{
  output-exponent-marker=\text{e},
  round-mode=places,
  round-precision=2
}
\begin{center}
\begin{tabular}{rr | rr|rr|rr|rr|rr|rr|rr|rr}
  \toprule
  & &  \multicolumn{2}{c}{IR}
    &  \multicolumn{2}{c}{Ch} 
    &  \multicolumn{2}{c}{$\rho,v,\beta$ (2)}
    &  \multicolumn{2}{c}{$\rho,v,\frac{1}{p}$}
    &  \multicolumn{2}{c}{$\rho,v,p$}
    &  \multicolumn{2}{c}{$\rho,v,T$ (1)}
    &  \multicolumn{2}{c}{$\rho,v,T$ (2)} 
    &  \multicolumn{2}{c}{$\rho,V,T$ (rev)}
  \\
  $p$ & $N$
    & $\norm{\mathrm{err_\rho}}_M$ & EOC
    & $\norm{\mathrm{err_\rho}}_M$ & EOC
    & $\norm{\mathrm{err_\rho}}_M$ & EOC
    & $\norm{\mathrm{err_\rho}}_M$ & EOC
    & $\norm{\mathrm{err_\rho}}_M$ & EOC
    & $\norm{\mathrm{err_\rho}}_M$ & EOC
    & $\norm{\mathrm{err_\rho}}_M$ & EOC
    & $\norm{\mathrm{err_\rho}}_M$ & EOC
  \\
  \midrule 
   1  &    10  &  \num{1.421e-01}  &               &  \num{1.443e-01}  &               &  \num{1.442e-01}  &               &  \num{1.389e-01}  &               &  \num{1.389e-01}  &               &  \num{1.443e-01}  &               &  \num{1.445e-01}  &               &  \num{1.446e-01}  &               \\
      &    20  &  \num{7.614e-02}  &  \num{ 0.90}  &  \num{7.471e-02}  &  \num{ 0.95}  &  \num{7.531e-02}  &  \num{ 0.94}  &  \num{9.050e-02}  &  \num{ 0.62}  &  \num{9.050e-02}  &  \num{ 0.62}  &  \num{7.471e-02}  &  \num{ 0.95}  &  \num{7.439e-02}  &  \num{ 0.96}  &  \num{7.370e-02}  &  \num{ 0.97}  \\
      &    40  &  \num{6.673e-02}  &  \num{ 0.19}  &  \num{6.689e-02}  &  \num{ 0.16}  &  \num{6.874e-02}  &  \num{ 0.13}  &  \num{6.233e-02}  &  \num{ 0.54}  &  \num{6.233e-02}  &  \num{ 0.54}  &  \num{6.689e-02}  &  \num{ 0.16}  &  \num{6.654e-02}  &  \num{ 0.16}  &  \num{6.677e-02}  &  \num{ 0.14}  \\
      &    80  &  \num{5.380e-02}  &  \num{ 0.31}  &  \num{5.423e-02}  &  \num{ 0.30}  &  \num{5.634e-02}  &  \num{ 0.29}  &  \num{5.346e-02}  &  \num{ 0.22}  &  \num{5.346e-02}  &  \num{ 0.22}  &  \num{5.423e-02}  &  \num{ 0.30}  &  \num{5.405e-02}  &  \num{ 0.30}  &  \num{5.401e-02}  &  \num{ 0.31}  \\
      &   160  &  \num{3.781e-02}  &  \num{ 0.51}  &  \num{3.829e-02}  &  \num{ 0.50}  &  \num{3.970e-02}  &  \num{ 0.51}  &  \num{3.658e-02}  &  \num{ 0.55}  &  \num{3.658e-02}  &  \num{ 0.55}  &  \num{3.829e-02}  &  \num{ 0.50}  &  \num{3.817e-02}  &  \num{ 0.50}  &  \num{3.804e-02}  &  \num{ 0.51}  \\
      &   320  &  \num{2.582e-02}  &  \num{ 0.55}  &  \num{2.597e-02}  &  \num{ 0.56}  &  \num{2.676e-02}  &  \num{ 0.57}  &  \num{2.493e-02}  &  \num{ 0.55}  &  \num{2.493e-02}  &  \num{ 0.55}  &  \num{2.597e-02}  &  \num{ 0.56}  &  \num{2.604e-02}  &  \num{ 0.55}  &  \num{2.628e-02}  &  \num{ 0.53}  \\
  \midrule 
   2  &    10  &  \num{8.771e-02}  &               &  \num{8.720e-02}  &               &  \num{8.740e-02}  &               &  \num{9.151e-02}  &               &  \num{9.151e-02}  &               &  \num{8.720e-02}  &               &  \num{8.634e-02}  &               &  \num{8.622e-02}  &               \\
      &    20  &  \num{5.260e-02}  &  \num{ 0.74}  &  \num{5.218e-02}  &  \num{ 0.74}  &  \num{5.183e-02}  &  \num{ 0.75}  &  \num{5.369e-02}  &  \num{ 0.77}  &  \num{5.369e-02}  &  \num{ 0.77}  &  \num{5.218e-02}  &  \num{ 0.74}  &  \num{5.205e-02}  &  \num{ 0.73}  &  \num{5.431e-02}  &  \num{ 0.67}  \\
      &    40  &  \num{4.829e-02}  &  \num{ 0.12}  &  \num{4.876e-02}  &  \num{ 0.10}  &  \num{4.899e-02}  &  \num{ 0.08}  &  \num{4.772e-02}  &  \num{ 0.17}  &  \num{4.772e-02}  &  \num{ 0.17}  &  \num{4.876e-02}  &  \num{ 0.10}  &  \num{4.839e-02}  &  \num{ 0.11}  &  \num{4.763e-02}  &  \num{ 0.19}  \\
      &    80  &  \num{2.255e-02}  &  \num{ 1.10}  &  \num{2.312e-02}  &  \num{ 1.08}  &  \num{2.300e-02}  &  \num{ 1.09}  &  \num{2.531e-02}  &  \num{ 0.92}  &  \num{2.531e-02}  &  \num{ 0.92}  &  \num{2.312e-02}  &  \num{ 1.08}  &  \num{2.277e-02}  &  \num{ 1.09}  &  \num{2.381e-02}  &  \num{ 1.00}  \\
      &   160  &  \num{1.946e-02}  &  \num{ 0.21}  &  \num{2.011e-02}  &  \num{ 0.20}  &  \num{1.964e-02}  &  \num{ 0.23}  &  \num{2.122e-02}  &  \num{ 0.25}  &  \num{2.122e-02}  &  \num{ 0.25}  &  \num{2.011e-02}  &  \num{ 0.20}  &  \num{2.007e-02}  &  \num{ 0.18}  &  \num{2.102e-02}  &  \num{ 0.18}  \\
      &   320  &  \num{1.556e-02}  &  \num{ 0.32}  &  \num{1.614e-02}  &  \num{ 0.32}  &  \num{1.625e-02}  &  \num{ 0.27}  &  \num{1.431e-02}  &  \num{ 0.57}  &  \num{1.431e-02}  &  \num{ 0.57}  &  \num{1.614e-02}  &  \num{ 0.32}  &  \num{1.598e-02}  &  \num{ 0.33}  &  \num{1.609e-02}  &  \num{ 0.39}  \\
  \midrule 
   3  &    10  &  \num{5.778e-02}  &               &  \num{5.574e-02}  &               &  \num{5.312e-02}  &               &  \num{6.406e-02}  &               &  \num{6.406e-02}  &               &  \num{5.574e-02}  &               &  \num{5.571e-02}  &               &  \num{5.963e-02}  &               \\
      &    20  &  \num{5.408e-02}  &  \num{ 0.10}  &  \num{5.328e-02}  &  \num{ 0.07}  &  \num{5.292e-02}  &  \num{ 0.01}  &  \num{6.006e-02}  &  \num{ 0.09}  &  \num{6.006e-02}  &  \num{ 0.09}  &  \num{5.328e-02}  &  \num{ 0.07}  &  \num{5.185e-02}  &  \num{ 0.10}  &  \num{5.244e-02}  &  \num{ 0.19}  \\
      &    40  &  \num{2.496e-02}  &  \num{ 1.12}  &  \num{2.390e-02}  &  \num{ 1.16}  &  \num{2.336e-02}  &  \num{ 1.18}  &  \num{3.639e-02}  &  \num{ 0.72}  &  \num{3.639e-02}  &  \num{ 0.72}  &  \num{2.390e-02}  &  \num{ 1.16}  &  \num{2.284e-02}  &  \num{ 1.18}  &  \num{2.699e-02}  &  \num{ 0.96}  \\
      &    80  &  \num{2.101e-02}  &  \num{ 0.25}  &  \num{2.021e-02}  &  \num{ 0.24}  &  \num{1.944e-02}  &  \num{ 0.26}  &  \num{2.883e-02}  &  \num{ 0.34}  &  \num{2.883e-02}  &  \num{ 0.34}  &  \num{2.021e-02}  &  \num{ 0.24}  &  \num{1.952e-02}  &  \num{ 0.23}  &  \num{2.367e-02}  &  \num{ 0.19}  \\
      &   160  &  \num{1.730e-02}  &  \num{ 0.28}  &  \num{1.724e-02}  &  \num{ 0.23}  &  \num{1.730e-02}  &  \num{ 0.17}  &  \num{2.270e-02}  &  \num{ 0.34}  &  \num{2.270e-02}  &  \num{ 0.34}  &  \num{1.724e-02}  &  \num{ 0.23}  &  \num{1.706e-02}  &  \num{ 0.19}  &  \num{1.685e-02}  &  \num{ 0.49}  \\
      &   320  &  \num{1.243e-02}  &  \num{ 0.48}  &  \num{1.247e-02}  &  \num{ 0.47}  &  \num{1.275e-02}  &  \num{ 0.44}  &  \num{2.302e-02}  &  \num{-0.02}  &  \num{2.302e-02}  &  \num{-0.02}  &  \num{1.247e-02}  &  \num{ 0.47}  &  \num{1.194e-02}  &  \num{ 0.51}  &  \num{1.369e-02}  &  \num{ 0.30}  \\
  \midrule 
   4  &    10  &  \num{3.845e-02}  &               &  \num{3.709e-02}  &               &  \num{3.651e-02}  &               &  \num{5.283e-02}  &               &  \num{5.283e-02}  &               &  \num{3.709e-02}  &               &  \num{3.543e-02}  &               &  \num{3.981e-02}  &               \\
      &    20  &  \num{3.484e-02}  &  \num{ 0.14}  &  \num{3.368e-02}  &  \num{ 0.14}  &  \num{3.428e-02}  &  \num{ 0.09}  &  \num{3.732e-02}  &  \num{ 0.50}  &  \num{3.732e-02}  &  \num{ 0.50}  &  \num{3.368e-02}  &  \num{ 0.14}  &  \num{3.407e-02}  &  \num{ 0.06}  &  \num{2.985e-02}  &  \num{ 0.42}  \\
      &    40  &  \num{2.688e-02}  &  \num{ 0.37}  &  \num{2.585e-02}  &  \num{ 0.38}  &  \num{2.688e-02}  &  \num{ 0.35}  &  \num{2.981e-02}  &  \num{ 0.32}  &  \num{2.981e-02}  &  \num{ 0.32}  &  \num{2.585e-02}  &  \num{ 0.38}  &  \num{2.654e-02}  &  \num{ 0.36}  &  \num{2.419e-02}  &  \num{ 0.30}  \\
      &    80  &  \num{2.442e-02}  &  \num{ 0.14}  &  \num{2.379e-02}  &  \num{ 0.12}  &  \num{2.396e-02}  &  \num{ 0.17}  &  \num{2.532e-02}  &  \num{ 0.24}  &  \num{2.532e-02}  &  \num{ 0.24}  &  \num{2.379e-02}  &  \num{ 0.12}  &  \num{2.421e-02}  &  \num{ 0.13}  &  \num{2.396e-02}  &  \num{ 0.01}  \\
      &   160  &  \num{1.392e-02}  &  \num{ 0.81}  &  \num{1.349e-02}  &  \num{ 0.82}  &  \num{1.372e-02}  &  \num{ 0.80}  &  \num{1.659e-02}  &  \num{ 0.61}  &  \num{1.659e-02}  &  \num{ 0.61}  &  \num{1.349e-02}  &  \num{ 0.82}  &  \num{1.422e-02}  &  \num{ 0.77}  &  \num{1.638e-02}  &  \num{ 0.55}  \\
      &   320  &  \num{1.403e-02}  &  \num{-0.01}  &  \num{1.365e-02}  &  \num{-0.02}  &  \num{1.401e-02}  &  \num{-0.03}  &  \num{1.939e-02}  &  \num{-0.22}  &  \num{1.939e-02}  &  \num{-0.22}  &  \num{1.365e-02}  &  \num{-0.02}  &  \num{1.395e-02}  &  \num{ 0.03}  &  \num{1.496e-02}  &  \num{ 0.13}  \\
  \midrule 
   5  &    10  &  \num{6.491e-02}  &               &  \num{6.491e-02}  &               &  \num{6.406e-02}  &               &  \num{6.531e-02}  &               &  \num{6.531e-02}  &               &  \num{6.491e-02}  &               &  \num{6.207e-02}  &               &  \num{5.829e-02}  &               \\
      &    20  &  \num{3.441e-02}  &  \num{ 0.92}  &  \num{3.387e-02}  &  \num{ 0.94}  &  \num{3.246e-02}  &  \num{ 0.98}  &  \num{3.487e-02}  &  \num{ 0.91}  &  \num{3.487e-02}  &  \num{ 0.91}  &  \num{3.387e-02}  &  \num{ 0.94}  &  \num{3.206e-02}  &  \num{ 0.95}  &  \num{3.250e-02}  &  \num{ 0.84}  \\
      &    40  &  \num{3.096e-02}  &  \num{ 0.15}  &  \num{2.846e-02}  &  \num{ 0.25}  &  \num{2.549e-02}  &  \num{ 0.35}  &  \num{2.751e-02}  &  \num{ 0.34}  &  \num{2.751e-02}  &  \num{ 0.34}  &  \num{2.846e-02}  &  \num{ 0.25}  &  \num{2.852e-02}  &  \num{ 0.17}  &  \num{2.923e-02}  &  \num{ 0.15}  \\
      &    80  &  \num{2.463e-02}  &  \num{ 0.33}  &  \num{2.473e-02}  &  \num{ 0.20}  &  \num{2.350e-02}  &  \num{ 0.12}  &  \num{2.186e-02}  &  \num{ 0.33}  &  \num{2.186e-02}  &  \num{ 0.33}  &  \num{2.473e-02}  &  \num{ 0.20}  &  \num{2.435e-02}  &  \num{ 0.23}  &  \num{2.320e-02}  &  \num{ 0.33}  \\
      &   160  &  \num{1.841e-02}  &  \num{ 0.42}  &  \num{1.831e-02}  &  \num{ 0.43}  &  \num{1.779e-02}  &  \num{ 0.40}  &  \num{1.620e-02}  &  \num{ 0.43}  &  \num{1.620e-02}  &  \num{ 0.43}  &  \num{1.831e-02}  &  \num{ 0.43}  &  \num{1.859e-02}  &  \num{ 0.39}  &  \num{1.711e-02}  &  \num{ 0.44}  \\
      &   320  &  \num{1.700e-02}  &  \num{ 0.12}  &  \num{1.721e-02}  &  \num{ 0.09}  &  \num{1.700e-02}  &  \num{ 0.07}  &  \num{1.618e-02}  &  \num{ 0.00}  &  \num{1.618e-02}  &  \num{ 0.00}  &  \num{1.721e-02}  &  \num{ 0.09}  &  \num{1.671e-02}  &  \num{ 0.15}  &  \num{1.555e-02}  &  \num{ 0.14}  \\
  \bottomrule
\end{tabular}
\end{center}
\end{sidewaystable}

\begin{table}
\small
\caption{Errors and experimental order of convergence (EOC) for varying polynomial
         degrees $p$ and number of elements $N$ for the modified Sod shock tube
         problem \eqref{eq:modsod} using the Suliciu relaxation solver as numerical
         flux and "simple" volume fluxes.}
\label{tab:modsod-nonEC-suliciu}
\sisetup{
  output-exponent-marker=\text{e},
  round-mode=places,
  round-precision=2
}
\begin{center}
\begin{tabular}{rr | rr|rr|rr|rr|rr}
  \toprule
  & &  \multicolumn{2}{c}{Central}
    &  \multicolumn{2}{c}{Morinishi} 
    &  \multicolumn{2}{c}{Ducros}
    &  \multicolumn{2}{c}{KG}
    &  \multicolumn{2}{c}{Pirozzoli}
  \\
  $p$ & $N$
    & $\norm{\mathrm{err_\rho}}_M$ & EOC
    & $\norm{\mathrm{err_\rho}}_M$ & EOC
    & $\norm{\mathrm{err_\rho}}_M$ & EOC
    & $\norm{\mathrm{err_\rho}}_M$ & EOC
    & $\norm{\mathrm{err_\rho}}_M$ & EOC
  \\
  \midrule
   1  &    10  &  \num{1.418e-01}  &               &  \num{1.424e-01}  &               &  \num{1.340e-01}  &               &  \num{1.334e-01}  &               &  \num{1.334e-01}  &               \\
      &    20  &  \num{7.459e-02}  &  \num{ 0.93}  &  \num{8.548e-02}  &  \num{ 0.74}  &  \num{7.209e-02}  &  \num{ 0.89}  &  \num{7.408e-02}  &  \num{ 0.85}  &  \num{7.482e-02}  &  \num{ 0.83}  \\
      &    40  &  \num{6.627e-02}  &  \num{ 0.17}  &  \num{1.049e-01}  &  \num{-0.29}  &  \num{5.710e-02}  &  \num{ 0.34}  &  \num{6.074e-02}  &  \num{ 0.29}  &  \num{6.172e-02}  &  \num{ 0.28}  \\
      &    80  &  \num{5.995e-02}  &  \num{ 0.14}  &           $*$     &        $*$    &  \num{4.491e-02}  &  \num{ 0.35}  &  \num{4.786e-02}  &  \num{ 0.34}  &  \num{4.861e-02}  &  \num{ 0.34}  \\
      &   160  &  \num{5.251e-02}  &  \num{ 0.19}  &           $*$     &        $*$    &  \num{3.103e-02}  &  \num{ 0.53}  &  \num{3.347e-02}  &  \num{ 0.52}  &  \num{3.416e-02}  &  \num{ 0.51}  \\
      &   320  &  \num{4.864e-02}  &  \num{ 0.11}  &           $*$     &        $*$    &  \num{2.143e-02}  &  \num{ 0.53}  &  \num{2.337e-02}  &  \num{ 0.52}  &  \num{2.401e-02}  &  \num{ 0.51}  \\
  \midrule 
   2  &    10  &           $*$     &               &  \num{8.379e-02}  &               &  \num{7.828e-02}  &               &  \num{8.032e-02}  &               &  \num{8.020e-02}  &               \\
      &    20  &           $*$     &        $*$    &  \num{4.619e-02}  &  \num{ 0.86}  &  \num{4.965e-02}  &  \num{ 0.66}  &  \num{5.096e-02}  &  \num{ 0.66}  &  \num{5.198e-02}  &  \num{ 0.63}  \\
      &    40  &           $*$     &        $*$    &  \num{4.174e-02}  &  \num{ 0.15}  &  \num{3.995e-02}  &  \num{ 0.31}  &  \num{4.200e-02}  &  \num{ 0.28}  &  \num{4.211e-02}  &  \num{ 0.30}  \\
      &    80  &           $*$     &        $*$    &  \num{2.728e-02}  &  \num{ 0.61}  &  \num{2.096e-02}  &  \num{ 0.93}  &  \num{2.135e-02}  &  \num{ 0.98}  &  \num{2.178e-02}  &  \num{ 0.95}  \\
      &   160  &           $*$     &        $*$    &           $*$     &        $*$    &  \num{2.045e-02}  &  \num{ 0.04}  &  \num{2.092e-02}  &  \num{ 0.03}  &  \num{2.111e-02}  &  \num{ 0.04}  \\
      &   320  &           $*$     &        $*$    &           $*$     &        $*$    &  \num{1.269e-02}  &  \num{ 0.69}  &  \num{1.411e-02}  &  \num{ 0.57}  &  \num{1.448e-02}  &  \num{ 0.54}  \\
  \midrule 
   3  &    10  &           $*$     &               &           $*$     &               &           $*$     &               &           $*$     &               &  \num{6.015e-02}  &               \\
      &    20  &           $*$     &        $*$    &           $*$     &        $*$    &           $*$     &        $*$    &           $*$     &        $*$    &           $*$     &        $*$    \\
      &    40  &           $*$     &        $*$    &           $*$     &        $*$    &           $*$     &        $*$    &           $*$     &        $*$    &           $*$     &        $*$    \\
      &    80  &           $*$     &        $*$    &           $*$     &        $*$    &           $*$     &        $*$    &           $*$     &        $*$    &           $*$     &        $*$    \\
      &   160  &           $*$     &        $*$    &           $*$     &        $*$    &           $*$     &        $*$    &           $*$     &        $*$    &           $*$     &        $*$    \\
      &   320  &           $*$     &        $*$    &           $*$     &        $*$    &           $*$     &        $*$    &           $*$     &        $*$    &           $*$     &        $*$    \\
  \midrule 
   4  &    10  &           $*$     &               &           $*$     &               &           $*$     &               &           $*$     &               &           $*$     &               \\
      &    20  &           $*$     &        $*$    &           $*$     &        $*$    &           $*$     &        $*$    &           $*$     &        $*$    &           $*$     &        $*$    \\
      &    40  &           $*$     &        $*$    &           $*$     &        $*$    &           $*$     &        $*$    &           $*$     &        $*$    &           $*$     &        $*$    \\
      &    80  &           $*$     &        $*$    &           $*$     &        $*$    &           $*$     &        $*$    &           $*$     &        $*$    &           $*$     &        $*$    \\
      &   160  &           $*$     &        $*$    &           $*$     &        $*$    &           $*$     &        $*$    &           $*$     &        $*$    &           $*$     &        $*$    \\
      &   320  &           $*$     &        $*$    &           $*$     &        $*$    &           $*$     &        $*$    &           $*$     &        $*$    &           $*$     &        $*$    \\
  \midrule 
   5  &    10  &           $*$     &               &           $*$     &               &           $*$     &               &           $*$     &               &           $*$     &               \\
      &    20  &           $*$     &        $*$    &           $*$     &        $*$    &           $*$     &        $*$    &           $*$     &        $*$    &           $*$     &        $*$    \\
      &    40  &           $*$     &        $*$    &           $*$     &        $*$    &           $*$     &        $*$    &           $*$     &        $*$    &           $*$     &        $*$    \\
      &    80  &           $*$     &        $*$    &           $*$     &        $*$    &           $*$     &        $*$    &           $*$     &        $*$    &           $*$     &        $*$    \\
      &   160  &           $*$     &        $*$    &           $*$     &        $*$    &           $*$     &        $*$    &           $*$     &        $*$    &           $*$     &        $*$    \\
      &   320  &           $*$     &        $*$    &           $*$     &        $*$    &           $*$     &        $*$    &           $*$     &        $*$    &           $*$     &        $*$    \\
  \bottomrule
\end{tabular}
\end{center}
\end{table}

\begin{table}
\small
\caption{Errors and experimental order of convergence (EOC) for varying polynomial
         degrees $p$ and number of elements $N$ for the modified Sod shock tube
         problem \eqref{eq:modsod} using the local Lax-Friedrichs numerical
         flux and "simple" volume fluxes.}
\label{tab:modsod-nonEC-LLF}
\sisetup{
  output-exponent-marker=\text{e},
  round-mode=places,
  round-precision=2
}
\begin{center}
\begin{tabular}{rr | rr|rr|rr|rr|rr}
  \toprule
  & &  \multicolumn{2}{c}{Central}
    &  \multicolumn{2}{c}{Morinishi} 
    &  \multicolumn{2}{c}{Ducros}
    &  \multicolumn{2}{c}{KG}
    &  \multicolumn{2}{c}{Pirozzoli}
  \\
  $p$ & $N$
    & $\norm{\mathrm{err_\rho}}_M$ & EOC
    & $\norm{\mathrm{err_\rho}}_M$ & EOC
    & $\norm{\mathrm{err_\rho}}_M$ & EOC
    & $\norm{\mathrm{err_\rho}}_M$ & EOC
    & $\norm{\mathrm{err_\rho}}_M$ & EOC
  \\
  \midrule
   1  &    10  &  \num{1.443e-01}  &               &  \num{1.445e-01}  &               &  \num{1.351e-01}  &               &  \num{1.343e-01}  &               &  \num{1.342e-01}  &               \\
      &    20  &  \num{7.295e-02}  &  \num{ 0.98}  &  \num{8.232e-02}  &  \num{ 0.81}  &  \num{6.952e-02}  &  \num{ 0.96}  &  \num{7.177e-02}  &  \num{ 0.90}  &  \num{7.230e-02}  &  \num{ 0.89}  \\
      &    40  &  \num{6.676e-02}  &  \num{ 0.13}  &  \num{9.441e-02}  &  \num{-0.20}  &  \num{5.911e-02}  &  \num{ 0.23}  &  \num{6.302e-02}  &  \num{ 0.19}  &  \num{6.437e-02}  &  \num{ 0.17}  \\
      &    80  &  \num{6.084e-02}  &  \num{ 0.13}  &           $*$     &        $*$    &  \num{4.617e-02}  &  \num{ 0.36}  &  \num{4.936e-02}  &  \num{ 0.35}  &  \num{5.039e-02}  &  \num{ 0.35}  \\
      &   160  &  \num{5.163e-02}  &  \num{ 0.24}  &           $*$     &        $*$    &  \num{3.221e-02}  &  \num{ 0.52}  &  \num{3.484e-02}  &  \num{ 0.50}  &  \num{3.575e-02}  &  \num{ 0.49}  \\
      &   320  &  \num{4.717e-02}  &  \num{ 0.13}  &           $*$     &        $*$    &  \num{2.246e-02}  &  \num{ 0.52}  &  \num{2.455e-02}  &  \num{ 0.50}  &  \num{2.533e-02}  &  \num{ 0.50}  \\
  \midrule 
   2  &    10  &  \num{8.424e-02}  &               &  \num{8.455e-02}  &               &  \num{8.618e-02}  &               &  \num{8.668e-02}  &               &  \num{8.672e-02}  &               \\
      &    20  &  \num{5.075e-02}  &  \num{ 0.73}  &  \num{5.227e-02}  &  \num{ 0.69}  &  \num{5.362e-02}  &  \num{ 0.68}  &  \num{5.428e-02}  &  \num{ 0.68}  &  \num{5.499e-02}  &  \num{ 0.66}  \\
      &    40  &  \num{3.781e-02}  &  \num{ 0.42}  &  \num{4.919e-02}  &  \num{ 0.09}  &  \num{3.985e-02}  &  \num{ 0.43}  &  \num{4.200e-02}  &  \num{ 0.37}  &  \num{4.219e-02}  &  \num{ 0.38}  \\
      &    80  &  \num{2.300e-02}  &  \num{ 0.72}  &           $*$     &        $*$    &  \num{2.254e-02}  &  \num{ 0.82}  &  \num{2.211e-02}  &  \num{ 0.93}  &  \num{2.253e-02}  &  \num{ 0.90}  \\
      &   160  &  \num{2.144e-02}  &  \num{ 0.10}  &           $*$     &        $*$    &  \num{2.108e-02}  &  \num{ 0.10}  &  \num{2.075e-02}  &  \num{ 0.09}  &  \num{2.102e-02}  &  \num{ 0.10}  \\
      &   320  &  \num{1.429e-02}  &  \num{ 0.59}  &           $*$     &        $*$    &  \num{1.322e-02}  &  \num{ 0.67}  &  \num{1.404e-02}  &  \num{ 0.56}  &  \num{1.443e-02}  &  \num{ 0.54}  \\
  \midrule 
   3  &    10  &           $*$     &               &           $*$     &               &           $*$     &               &  \num{6.105e-02}  &               &  \num{6.223e-02}  &               \\
      &    20  &           $*$     &        $*$    &           $*$     &        $*$    &           $*$     &        $*$    &  \num{4.526e-02}  &  \num{ 0.43}  &  \num{4.579e-02}  &  \num{ 0.44}  \\
      &    40  &           $*$     &        $*$    &           $*$     &        $*$    &           $*$     &        $*$    &  \num{2.477e-02}  &  \num{ 0.87}  &  \num{2.511e-02}  &  \num{ 0.87}  \\
      &    80  &           $*$     &        $*$    &           $*$     &        $*$    &           $*$     &        $*$    &  \num{2.279e-02}  &  \num{ 0.12}  &  \num{2.293e-02}  &  \num{ 0.13}  \\
      &   160  &           $*$     &        $*$    &           $*$     &        $*$    &           $*$     &        $*$    &  \num{1.522e-02}  &  \num{ 0.58}  &  \num{1.546e-02}  &  \num{ 0.57}  \\
      &   320  &           $*$     &        $*$    &           $*$     &        $*$    &           $*$     &        $*$    &  \num{1.160e-02}  &  \num{ 0.39}  &  \num{1.169e-02}  &  \num{ 0.40}  \\
  \midrule 
   4  &    10  &           $*$     &               &           $*$     &               &           $*$     &               &           $*$     &               &           $*$     &               \\
      &    20  &           $*$     &        $*$    &           $*$     &        $*$    &           $*$     &        $*$    &           $*$     &        $*$    &           $*$     &        $*$    \\
      &    40  &           $*$     &        $*$    &           $*$     &        $*$    &           $*$     &        $*$    &           $*$     &        $*$    &           $*$     &        $*$    \\
      &    80  &           $*$     &        $*$    &           $*$     &        $*$    &           $*$     &        $*$    &           $*$     &        $*$    &           $*$     &        $*$    \\
      &   160  &           $*$     &        $*$    &           $*$     &        $*$    &           $*$     &        $*$    &           $*$     &        $*$    &           $*$     &        $*$    \\
      &   320  &           $*$     &        $*$    &           $*$     &        $*$    &           $*$     &        $*$    &           $*$     &        $*$    &           $*$     &        $*$    \\
  \midrule 
   5  &    10  &           $*$     &               &           $*$     &               &           $*$     &               &           $*$     &               &           $*$     &               \\
      &    20  &           $*$     &        $*$    &           $*$     &        $*$    &           $*$     &        $*$    &           $*$     &        $*$    &           $*$     &        $*$    \\
      &    40  &           $*$     &        $*$    &           $*$     &        $*$    &           $*$     &        $*$    &           $*$     &        $*$    &           $*$     &        $*$    \\
      &    80  &           $*$     &        $*$    &           $*$     &        $*$    &           $*$     &        $*$    &           $*$     &        $*$    &           $*$     &        $*$    \\
      &   160  &           $*$     &        $*$    &           $*$     &        $*$    &           $*$     &        $*$    &           $*$     &        $*$    &           $*$     &        $*$    \\
      &   320  &           $*$     &        $*$    &           $*$     &        $*$    &           $*$     &        $*$    &           $*$     &        $*$    &           $*$     &        $*$    \\
  \bottomrule
\end{tabular}
\end{center}
\end{table}

\clearpage
\subsection{Sod's Shock Tube: Finite Volume Setting}
\label{sec:sod-FV}

Here, the classical shock tube of \citet{sod1978survey} with initial condition
\eqref{eq:sod} of section \ref{sec:sod-flux-diff} will be used again, but in the
context of first order finite volume methods.

The entropy conservative flux \eqref{eq:Chandrashekar-rho-v-beta-EC-KEP} of
\citet{chandrashekar2013kinetic} has been used with the scalar dissipation (SD)
of \citet{derigs2016novelAveraging}, the matrix (MD) and hybrid (HD) dissipation
of \citet{winters2016uniquely} and the local Lax-Friedrichs (LLF) dissipation
operator. The last one has also been used for the other entropy conservative
fluxes. Additionally, the classical LLF flux and Suliciu relaxation solver of
\citet{bouchut2004nonlinear} are tested.

The results are shown in Table \ref{tab:sod-FV}.
Here, the matrix dissipation (MD) and the Suliciu solver perform equally good
and yield less error than the other fluxes. Additionally, there is nearly no
variance across the solvers using the LLF or scalar dissipation operator.

\begin{table}[!htp]
\small
\caption{Errors and experimental order of convergence (EOC) for varying number
         of elements $N$ for the Sod shock tube problem \eqref{eq:sod} using
         several numerical fluxes.}
\label{tab:sod-FV}
\sisetup{
  output-exponent-marker=\text{e},
  round-mode=places,
  round-precision=2
}
\begin{center}
\begin{tabular}{r | rr|rr|rr|rr}
  \toprule
  & \multicolumn{2}{c}{Ch + SD DWGW} 
  & \multicolumn{2}{c}{Ch + MD DWGW} 
  & \multicolumn{2}{c}{Ch + HD DWGW} 
  & \multicolumn{2}{c}{Ch + LLF} 
  \\
  $N$
    & $\norm{\mathrm{err_\rho}}_M$ & EOC
    & $\norm{\mathrm{err_\rho}}_M$ & EOC
    & $\norm{\mathrm{err_\rho}}_M$ & EOC
    & $\norm{\mathrm{err_\rho}}_M$ & EOC
  \\
  \midrule
    100  &  \num{4.618e-02}  &               &  \num{3.965e-02}  &               &  \num{4.221e-02}  &               &  \num{4.622e-02}  &               \\
    200  &  \num{3.745e-02}  &  \num{ 0.30}  &  \num{3.040e-02}  &  \num{ 0.38}  &  \num{3.234e-02}  &  \num{ 0.38}  &  \num{3.747e-02}  &  \num{ 0.30}  \\
    400  &  \num{2.796e-02}  &  \num{ 0.42}  &  \num{2.190e-02}  &  \num{ 0.47}  &  \num{2.309e-02}  &  \num{ 0.49}  &  \num{2.797e-02}  &  \num{ 0.42}  \\
    800  &  \num{2.077e-02}  &  \num{ 0.43}  &  \num{1.623e-02}  &  \num{ 0.43}  &  \num{1.687e-02}  &  \num{ 0.45}  &  \num{2.077e-02}  &  \num{ 0.43}  \\
   1600  &  \num{1.556e-02}  &  \num{ 0.42}  &  \num{1.228e-02}  &  \num{ 0.40}  &  \num{1.258e-02}  &  \num{ 0.42}  &  \num{1.556e-02}  &  \num{ 0.42}  \\
   3200  &  \num{1.181e-02}  &  \num{ 0.40}  &  \num{9.329e-03}  &  \num{ 0.40}  &  \num{9.486e-03}  &  \num{ 0.41}  &  \num{1.181e-02}  &  \num{ 0.40}  \\
   6400  &  \num{9.241e-03}  &  \num{ 0.35}  &  \num{7.345e-03}  &  \num{ 0.34}  &  \num{7.415e-03}  &  \num{ 0.36}  &  \num{9.242e-03}  &  \num{ 0.35}  \\
  12800  &  \num{7.216e-03}  &  \num{ 0.36}  &  \num{5.597e-03}  &  \num{ 0.39}  &  \num{5.637e-03}  &  \num{ 0.40}  &  \num{7.217e-03}  &  \num{ 0.36}  \\
  \bottomrule
  \toprule
  & \multicolumn{2}{c}{$\rho,v,\beta$ (2) + LLF}
  & \multicolumn{2}{c}{$\rho,v,\frac{1}{p}$ + LLF}
  & \multicolumn{2}{c}{$\rho,v,p$ + LLF}
  & \multicolumn{2}{c}{$\rho,v,T$ (1) + LLF}
  \\
  $N$
    & $\norm{\mathrm{err_\rho}}_M$ & EOC
    & $\norm{\mathrm{err_\rho}}_M$ & EOC
    & $\norm{\mathrm{err_\rho}}_M$ & EOC
    & $\norm{\mathrm{err_\rho}}_M$ & EOC
  \\
  \midrule
    100  &  \num{4.620e-02}  &               &  \num{4.607e-02}  &               &  \num{4.607e-02}  &               &  \num{4.622e-02}  &               \\
    200  &  \num{3.746e-02}  &  \num{ 0.30}  &  \num{3.740e-02}  &  \num{ 0.30}  &  \num{3.740e-02}  &  \num{ 0.30}  &  \num{3.747e-02}  &  \num{ 0.30}  \\
    400  &  \num{2.796e-02}  &  \num{ 0.42}  &  \num{2.795e-02}  &  \num{ 0.42}  &  \num{2.795e-02}  &  \num{ 0.42}  &  \num{2.797e-02}  &  \num{ 0.42}  \\
    800  &  \num{2.076e-02}  &  \num{ 0.43}  &  \num{2.077e-02}  &  \num{ 0.43}  &  \num{2.077e-02}  &  \num{ 0.43}  &  \num{2.077e-02}  &  \num{ 0.43}  \\
   1600  &  \num{1.556e-02}  &  \num{ 0.42}  &  \num{1.557e-02}  &  \num{ 0.42}  &  \num{1.557e-02}  &  \num{ 0.42}  &  \num{1.556e-02}  &  \num{ 0.42}  \\
   3200  &  \num{1.181e-02}  &  \num{ 0.40}  &  \num{1.182e-02}  &  \num{ 0.40}  &  \num{1.182e-02}  &  \num{ 0.40}  &  \num{1.181e-02}  &  \num{ 0.40}  \\
   6400  &  \num{9.241e-03}  &  \num{ 0.35}  &  \num{9.243e-03}  &  \num{ 0.35}  &  \num{9.243e-03}  &  \num{ 0.35}  &  \num{9.242e-03}  &  \num{ 0.35}  \\
  12800  &  \num{7.216e-03}  &  \num{ 0.36}  &  \num{7.217e-03}  &  \num{ 0.36}  &  \num{7.217e-03}  &  \num{ 0.36}  &  \num{7.217e-03}  &  \num{ 0.36}  \\
  \bottomrule
  \toprule
  & \multicolumn{2}{c}{$\rho,v,T$ (2) + LLF} 
  & \multicolumn{2}{c}{$\rho,V,T$ (rev) + LLF}
  & \multicolumn{2}{c}{LLF} 
  & \multicolumn{2}{c}{Suliciu} 
  \\
  $N$
    & $\norm{\mathrm{err_\rho}}_M$ & EOC
    & $\norm{\mathrm{err_\rho}}_M$ & EOC
    & $\norm{\mathrm{err_\rho}}_M$ & EOC
    & $\norm{\mathrm{err_\rho}}_M$ & EOC
  \\
  \midrule
    100  &  \num{4.623e-02}  &               &  \num{4.632e-02}  &               &  \num{4.576e-02}  &               &  \num{3.945e-02}  &               \\
    200  &  \num{3.747e-02}  &  \num{ 0.30}  &  \num{3.755e-02}  &  \num{ 0.30}  &  \num{3.721e-02}  &  \num{ 0.30}  &  \num{3.031e-02}  &  \num{ 0.38}  \\
    400  &  \num{2.797e-02}  &  \num{ 0.42}  &  \num{2.800e-02}  &  \num{ 0.42}  &  \num{2.785e-02}  &  \num{ 0.42}  &  \num{2.191e-02}  &  \num{ 0.47}  \\
    800  &  \num{2.077e-02}  &  \num{ 0.43}  &  \num{2.078e-02}  &  \num{ 0.43}  &  \num{2.072e-02}  &  \num{ 0.43}  &  \num{1.626e-02}  &  \num{ 0.43}  \\
   1600  &  \num{1.556e-02}  &  \num{ 0.42}  &  \num{1.557e-02}  &  \num{ 0.42}  &  \num{1.555e-02}  &  \num{ 0.41}  &  \num{1.230e-02}  &  \num{ 0.40}  \\
   3200  &  \num{1.181e-02}  &  \num{ 0.40}  &  \num{1.182e-02}  &  \num{ 0.40}  &  \num{1.182e-02}  &  \num{ 0.40}  &  \num{9.359e-03}  &  \num{ 0.39}  \\
   6400  &  \num{9.241e-03}  &  \num{ 0.35}  &  \num{9.243e-03}  &  \num{ 0.35}  &  \num{9.249e-03}  &  \num{ 0.35}  &  \num{7.362e-03}  &  \num{ 0.35}  \\
  12800  &  \num{7.216e-03}  &  \num{ 0.36}  &  \num{7.217e-03}  &  \num{ 0.36}  &  \num{7.229e-03}  &  \num{ 0.36}  &  \num{5.626e-03}  &  \num{ 0.39}  \\
  \bottomrule
\end{tabular}
\end{center}
\end{table}

\subsection{Modified Version of Sod's Shock Tube: Finite Volume Setting}
\label{sec:modsod-FV}

Similar to the previous section, the modified Sod shock tube problem of section
\ref{sec:modsod-flux-diff} is used to test the finite volume fluxes.
The results are shown in Table \ref{tab:modsod-FV}.
Again, the matrix dissipation and the Suliciu solver perform equally good and
are superior to the other fluxes. As in section \ref{sec:sod-FV}, there is nearly
no variance across the methods with matrix / LLF dissipation.

\begin{table}[!htp]
\small
\caption{Errors and experimental order of convergence (EOC) for varying number
         of elements $N$ for the modified Sod shock tube problem \eqref{eq:modsod}
         using several numerical fluxes.}
\label{tab:modsod-FV}
\sisetup{
  output-exponent-marker=\text{e},
  round-mode=places,
  round-precision=2
}
\begin{center}
\begin{tabular}{r | rr|rr|rr|rr}
  \toprule
  & \multicolumn{2}{c}{Ch + SD DWGW} 
  & \multicolumn{2}{c}{Ch + MD DWGW} 
  & \multicolumn{2}{c}{Ch + HD DWGW} 
  & \multicolumn{2}{c}{Ch + LLF} 
  \\
  $N$
    & $\norm{\mathrm{err_\rho}}_M$ & EOC
    & $\norm{\mathrm{err_\rho}}_M$ & EOC
    & $\norm{\mathrm{err_\rho}}_M$ & EOC
    & $\norm{\mathrm{err_\rho}}_M$ & EOC
  \\
  \midrule
    100  &  \num{5.548e-02}  &               &  \num{4.026e-02}  &               &  \num{4.592e-02}  &               &  \num{5.549e-02}  &               \\
    200  &  \num{4.669e-02}  &  \num{ 0.25}  &  \num{3.308e-02}  &  \num{ 0.28}  &  \num{3.610e-02}  &  \num{ 0.35}  &  \num{4.669e-02}  &  \num{ 0.25}  \\
    400  &  \num{3.700e-02}  &  \num{ 0.34}  &  \num{2.647e-02}  &  \num{ 0.32}  &  \num{2.795e-02}  &  \num{ 0.37}  &  \num{3.700e-02}  &  \num{ 0.34}  \\
    800  &  \num{2.898e-02}  &  \num{ 0.35}  &  \num{2.140e-02}  &  \num{ 0.31}  &  \num{2.212e-02}  &  \num{ 0.34}  &  \num{2.898e-02}  &  \num{ 0.35}  \\
   1600  &  \num{2.231e-02}  &  \num{ 0.38}  &  \num{1.678e-02}  &  \num{ 0.35}  &  \num{1.715e-02}  &  \num{ 0.37}  &  \num{2.231e-02}  &  \num{ 0.38}  \\
   3200  &  \num{1.758e-02}  &  \num{ 0.34}  &  \num{1.373e-02}  &  \num{ 0.29}  &  \num{1.387e-02}  &  \num{ 0.31}  &  \num{1.758e-02}  &  \num{ 0.34}  \\
   6400  &  \num{1.392e-02}  &  \num{ 0.34}  &  \num{1.101e-02}  &  \num{ 0.32}  &  \num{1.108e-02}  &  \num{ 0.32}  &  \num{1.392e-02}  &  \num{ 0.34}  \\
  12800  &  \num{1.098e-02}  &  \num{ 0.34}  &  \num{8.510e-03}  &  \num{ 0.37}  &  \num{8.572e-03}  &  \num{ 0.37}  &  \num{1.099e-02}  &  \num{ 0.34}  \\
  \bottomrule
  \toprule
  & \multicolumn{2}{c}{$\rho,v,\beta$ (2) + LLF}
  & \multicolumn{2}{c}{$\rho,v,\frac{1}{p}$ + LLF}
  & \multicolumn{2}{c}{$\rho,v,p$ + LLF}
  & \multicolumn{2}{c}{$\rho,v,T$ (1) + LLF}
  \\
  $N$
    & $\norm{\mathrm{err_\rho}}_M$ & EOC
    & $\norm{\mathrm{err_\rho}}_M$ & EOC
    & $\norm{\mathrm{err_\rho}}_M$ & EOC
    & $\norm{\mathrm{err_\rho}}_M$ & EOC
  \\
  \midrule
    100  &  \num{5.548e-02}  &               &  \num{5.534e-02}  &               &  \num{5.534e-02}  &               &  \num{5.549e-02}  &               \\
    200  &  \num{4.668e-02}  &  \num{ 0.25}  &  \num{4.668e-02}  &  \num{ 0.25}  &  \num{4.668e-02}  &  \num{ 0.25}  &  \num{4.669e-02}  &  \num{ 0.25}  \\
    400  &  \num{3.699e-02}  &  \num{ 0.34}  &  \num{3.702e-02}  &  \num{ 0.33}  &  \num{3.702e-02}  &  \num{ 0.33}  &  \num{3.700e-02}  &  \num{ 0.34}  \\
    800  &  \num{2.897e-02}  &  \num{ 0.35}  &  \num{2.899e-02}  &  \num{ 0.35}  &  \num{2.899e-02}  &  \num{ 0.35}  &  \num{2.898e-02}  &  \num{ 0.35}  \\
   1600  &  \num{2.230e-02}  &  \num{ 0.38}  &  \num{2.231e-02}  &  \num{ 0.38}  &  \num{2.231e-02}  &  \num{ 0.38}  &  \num{2.231e-02}  &  \num{ 0.38}  \\
   3200  &  \num{1.758e-02}  &  \num{ 0.34}  &  \num{1.759e-02}  &  \num{ 0.34}  &  \num{1.759e-02}  &  \num{ 0.34}  &  \num{1.758e-02}  &  \num{ 0.34}  \\
   6400  &  \num{1.392e-02}  &  \num{ 0.34}  &  \num{1.393e-02}  &  \num{ 0.34}  &  \num{1.393e-02}  &  \num{ 0.34}  &  \num{1.392e-02}  &  \num{ 0.34}  \\
  12800  &  \num{1.098e-02}  &  \num{ 0.34}  &  \num{1.099e-02}  &  \num{ 0.34}  &  \num{1.099e-02}  &  \num{ 0.34}  &  \num{1.099e-02}  &  \num{ 0.34}  \\
  \bottomrule
  \toprule
  & \multicolumn{2}{c}{$\rho,v,T$ (2) + LLF} 
  & \multicolumn{2}{c}{$\rho,V,T$ (rev) + LLF}
  & \multicolumn{2}{c}{LLF} 
  & \multicolumn{2}{c}{Suliciu} 
  \\
  $N$
    & $\norm{\mathrm{err_\rho}}_M$ & EOC
    & $\norm{\mathrm{err_\rho}}_M$ & EOC
    & $\norm{\mathrm{err_\rho}}_M$ & EOC
    & $\norm{\mathrm{err_\rho}}_M$ & EOC
  \\
  \midrule
    100  &  \num{5.550e-02}  &               &  \num{5.554e-02}  &               &  \num{5.581e-02}  &               &  \num{4.130e-02}  &               \\
    200  &  \num{4.670e-02}  &  \num{ 0.25}  &  \num{4.673e-02}  &  \num{ 0.25}  &  \num{4.690e-02}  &  \num{ 0.25}  &  \num{3.335e-02}  &  \num{ 0.31}  \\
    400  &  \num{3.700e-02}  &  \num{ 0.34}  &  \num{3.703e-02}  &  \num{ 0.34}  &  \num{3.714e-02}  &  \num{ 0.34}  &  \num{2.657e-02}  &  \num{ 0.33}  \\
    800  &  \num{2.898e-02}  &  \num{ 0.35}  &  \num{2.899e-02}  &  \num{ 0.35}  &  \num{2.907e-02}  &  \num{ 0.35}  &  \num{2.147e-02}  &  \num{ 0.31}  \\
   1600  &  \num{2.230e-02}  &  \num{ 0.38}  &  \num{2.231e-02}  &  \num{ 0.38}  &  \num{2.238e-02}  &  \num{ 0.38}  &  \num{1.688e-02}  &  \num{ 0.35}  \\
   3200  &  \num{1.758e-02}  &  \num{ 0.34}  &  \num{1.758e-02}  &  \num{ 0.34}  &  \num{1.763e-02}  &  \num{ 0.34}  &  \num{1.379e-02}  &  \num{ 0.29}  \\
   6400  &  \num{1.392e-02}  &  \num{ 0.34}  &  \num{1.393e-02}  &  \num{ 0.34}  &  \num{1.396e-02}  &  \num{ 0.34}  &  \num{1.107e-02}  &  \num{ 0.32}  \\
  12800  &  \num{1.099e-02}  &  \num{ 0.34}  &  \num{1.099e-02}  &  \num{ 0.34}  &  \num{1.103e-02}  &  \num{ 0.34}  &  \num{8.605e-03}  &  \num{ 0.36}  \\
  \bottomrule
\end{tabular}
\end{center}
\end{table}

\subsection{Near Vacuum Rarefaction}
\label{sec:nearVacuumRarefaction-FV}

In this section, the rarefaction waves near vacuum as described by
\citet[Section 4.3.3, Test 2]{toro2009riemann} will be used to test the methods.
The initial condition is given in primitive variables by
\begin{equation}
\label{eq:nearVacuumRarefaction}
\begin{aligned}
  \rho_0(x) = 1
  ,\qquad
  v_0(x) =
  \begin{cases}
    -2, & x < \frac{1}{2}, \\
     2, & \text{ else},
  \end{cases}
  ,\qquad
  p_0(x) = 0.4,
\end{aligned}
\end{equation}
and the conservative variables are again computed via $\rho v_0 = \rho_0 v_0$ and
$\rho e_0 = \frac{1}{2} \rho_0 v_0^2 + \frac{p_0}{\gamma-1}$.
The solution is computed on the domain $[0, 1]$ from $t = 0$ until $t = 0.15$.

Using the same finite volume methods as in section \ref{sec:sod-FV}, the results
are shown in Table \ref{tab:nearVacuumRarefaction-FV}.
Across the varying number of elements $N$, no flux is clearly superior.

\begin{table}[!htp]
\small
\caption{Errors and experimental order of convergence (EOC) for varying number
         of elements $N$ for the near vacuum rarefaction problem
         \eqref{eq:nearVacuumRarefaction} using several numerical fluxes.}
\label{tab:nearVacuumRarefaction-FV}
\sisetup{
  output-exponent-marker=\text{e},
  round-mode=places,
  round-precision=2
}
\begin{center}
\begin{tabular}{r | rr|rr|rr|rr}
  \toprule
  & \multicolumn{2}{c}{Ch + SD DWGW} 
  & \multicolumn{2}{c}{Ch + MD DWGW} 
  & \multicolumn{2}{c}{Ch + HD DWGW} 
  & \multicolumn{2}{c}{Ch + LLF} 
  \\
  $N$
    & $\norm{\mathrm{err_\rho}}_M$ & EOC
    & $\norm{\mathrm{err_\rho}}_M$ & EOC
    & $\norm{\mathrm{err_\rho}}_M$ & EOC
    & $\norm{\mathrm{err_\rho}}_M$ & EOC
  \\
  \midrule
    100  &  \num{8.833e-02}  &               &  \num{7.942e-02}  &               &  \num{7.979e-02}  &               &  \num{8.835e-02}  &               \\
    200  &  \num{5.655e-02}  &  \num{ 0.64}  &  \num{5.670e-02}  &  \num{ 0.49}  &  \num{5.622e-02}  &  \num{ 0.51}  &  \num{5.662e-02}  &  \num{ 0.64}  \\
    400  &  \num{4.008e-02}  &  \num{ 0.50}  &  \num{4.178e-02}  &  \num{ 0.44}  &  \num{4.140e-02}  &  \num{ 0.44}  &  \num{4.014e-02}  &  \num{ 0.50}  \\
    800  &  \num{2.909e-02}  &  \num{ 0.46}  &  \num{2.994e-02}  &  \num{ 0.48}  &  \num{2.979e-02}  &  \num{ 0.47}  &  \num{2.912e-02}  &  \num{ 0.46}  \\
   1600  &  \num{2.024e-02}  &  \num{ 0.52}  &  \num{2.053e-02}  &  \num{ 0.54}  &  \num{2.048e-02}  &  \num{ 0.54}  &  \num{2.026e-02}  &  \num{ 0.52}  \\
   3200  &  \num{1.324e-02}  &  \num{ 0.61}  &  \num{1.334e-02}  &  \num{ 0.62}  &  \num{1.332e-02}  &  \num{ 0.62}  &  \num{1.325e-02}  &  \num{ 0.61}  \\
   6400  &  \num{8.035e-03}  &  \num{ 0.72}  &  \num{8.083e-03}  &  \num{ 0.72}  &  \num{8.060e-03}  &  \num{ 0.72}  &  \num{8.041e-03}  &  \num{ 0.72}  \\
  12800  &  \num{4.251e-03}  &  \num{ 0.92}  &  \num{4.217e-03}  &  \num{ 0.94}  &  \num{4.198e-03}  &  \num{ 0.94}  &  \num{4.262e-03}  &  \num{ 0.92}  \\
  \bottomrule
  \toprule
  & \multicolumn{2}{c}{$\rho,v,\beta$ (2) + LLF}
  & \multicolumn{2}{c}{$\rho,v,\frac{1}{p}$ + LLF}
  & \multicolumn{2}{c}{$\rho,v,p$ + LLF}
  & \multicolumn{2}{c}{$\rho,v,T$ (1) + LLF}
  \\
  $N$
    & $\norm{\mathrm{err_\rho}}_M$ & EOC
    & $\norm{\mathrm{err_\rho}}_M$ & EOC
    & $\norm{\mathrm{err_\rho}}_M$ & EOC
    & $\norm{\mathrm{err_\rho}}_M$ & EOC
  \\
  \midrule
    100  &  \num{8.798e-02}  &               &  \num{8.874e-02}  &               &  \num{8.874e-02}  &               &  \num{8.835e-02}  &               \\
    200  &  \num{5.648e-02}  &  \num{ 0.64}  &  \num{5.684e-02}  &  \num{ 0.64}  &  \num{5.684e-02}  &  \num{ 0.64}  &  \num{5.662e-02}  &  \num{ 0.64}  \\
    400  &  \num{4.009e-02}  &  \num{ 0.49}  &  \num{4.039e-02}  &  \num{ 0.49}  &  \num{4.039e-02}  &  \num{ 0.49}  &  \num{4.014e-02}  &  \num{ 0.50}  \\
    800  &  \num{2.910e-02}  &  \num{ 0.46}  &  \num{2.929e-02}  &  \num{ 0.46}  &  \num{2.929e-02}  &  \num{ 0.46}  &  \num{2.912e-02}  &  \num{ 0.46}  \\
   1600  &  \num{2.025e-02}  &  \num{ 0.52}  &  \num{2.034e-02}  &  \num{ 0.53}  &  \num{2.034e-02}  &  \num{ 0.53}  &  \num{2.026e-02}  &  \num{ 0.52}  \\
   3200  &  \num{1.324e-02}  &  \num{ 0.61}  &  \num{1.329e-02}  &  \num{ 0.61}  &  \num{1.329e-02}  &  \num{ 0.61}  &  \num{1.325e-02}  &  \num{ 0.61}  \\
   6400  &  \num{8.035e-03}  &  \num{ 0.72}  &  \num{8.065e-03}  &  \num{ 0.72}  &  \num{8.065e-03}  &  \num{ 0.72}  &  \num{8.041e-03}  &  \num{ 0.72}  \\
  12800  &  \num{4.252e-03}  &  \num{ 0.92}  &  \num{4.285e-03}  &  \num{ 0.91}  &  \num{4.285e-03}  &  \num{ 0.91}  &  \num{4.262e-03}  &  \num{ 0.92}  \\
  \bottomrule
  \toprule
  & \multicolumn{2}{c}{$\rho,v,T$ (2) + LLF} 
  & \multicolumn{2}{c}{$\rho,V,T$ (rev) + LLF}
  & \multicolumn{2}{c}{LLF} 
  & \multicolumn{2}{c}{Suliciu} 
  \\
  $N$
    & $\norm{\mathrm{err_\rho}}_M$ & EOC
    & $\norm{\mathrm{err_\rho}}_M$ & EOC
    & $\norm{\mathrm{err_\rho}}_M$ & EOC
    & $\norm{\mathrm{err_\rho}}_M$ & EOC
  \\
  \midrule
    100  &  \num{8.798e-02}  &               &  \num{8.839e-02}  &               &  \num{8.070e-02}  &               &  \num{7.716e-02}  &               \\
    200  &  \num{5.632e-02}  &  \num{ 0.64}  &  \num{5.668e-02}  &  \num{ 0.64}  &  \num{5.516e-02}  &  \num{ 0.55}  &  \num{5.553e-02}  &  \num{ 0.47}  \\
    400  &  \num{3.994e-02}  &  \num{ 0.50}  &  \num{4.018e-02}  &  \num{ 0.50}  &  \num{4.003e-02}  &  \num{ 0.46}  &  \num{4.104e-02}  &  \num{ 0.44}  \\
    800  &  \num{2.901e-02}  &  \num{ 0.46}  &  \num{2.915e-02}  &  \num{ 0.46}  &  \num{2.894e-02}  &  \num{ 0.47}  &  \num{2.946e-02}  &  \num{ 0.48}  \\
   1600  &  \num{2.020e-02}  &  \num{ 0.52}  &  \num{2.027e-02}  &  \num{ 0.52}  &  \num{2.007e-02}  &  \num{ 0.53}  &  \num{2.024e-02}  &  \num{ 0.54}  \\
   3200  &  \num{1.322e-02}  &  \num{ 0.61}  &  \num{1.325e-02}  &  \num{ 0.61}  &  \num{1.311e-02}  &  \num{ 0.61}  &  \num{1.318e-02}  &  \num{ 0.62}  \\
   6400  &  \num{8.019e-03}  &  \num{ 0.72}  &  \num{8.045e-03}  &  \num{ 0.72}  &  \num{7.918e-03}  &  \num{ 0.73}  &  \num{7.979e-03}  &  \num{ 0.72}  \\
  12800  &  \num{4.234e-03}  &  \num{ 0.92}  &  \num{4.264e-03}  &  \num{ 0.92}  &  \num{4.054e-03}  &  \num{ 0.97}  &  \num{4.117e-03}  &  \num{ 0.95}  \\
  \bottomrule
\end{tabular}
\end{center}
\end{table}

\subsection{Left Half of the Blast Wave Problem of Woodward and Colella}
\label{sec:blastWaveLeftWC-FV}

In this section, the left half of the blast wave problem of
\citet[Section IV.a]{woodward1984numerical} as described by
\citet[Section 4.3.3, Test 3]{toro2009riemann} is considered. In primitive variables,
it is given by
\begin{equation}
\label{eq:blastWaveLeftWC}
\begin{aligned}
  \rho_0(x) = 1
  ,\qquad
  v_0(x) = 0
  ,\qquad
  p_0(x) =
  \begin{cases}
    1000, & x < \frac{1}{2}, \\
    0.01, & \text{ else},
  \end{cases}
\end{aligned}
\end{equation}
and the conservative variables are again computed via $\rho v_0 = \rho_0 v_0$ and
$\rho e_0 = \frac{1}{2} \rho_0 v_0^2 + \frac{p_0}{\gamma-1}$.
The solution is computed on the domain $[0, 1]$ from $t = 0$ until $t = 0.012$
using the finite volume methods described in section \ref{sec:sod-FV}.

The results are shown in table \ref{tab:blastWaveLeftWC-FV}. The simulations
using the fluxes with variables $\rho, v$ and $p$ or $\frac{1}{p}$ crashed since
they left the invariant region for the Euler equations. As described by
\citet{derigs2016novelAveraging}, the reason is the appearance of the pressure in
the density flux as in the entropy conservative flux of \citet{roe2006affordable}.

The Suliciu solver and the matrix dissipation (MD) yielded similar errors until
the last one crashed using \num{12800} elements. There is a bit more variance across
the other fluxes than in the previous sections \ref{sec:sod-FV}, \ref{sec:modsod-FV}
and \ref{sec:nearVacuumRarefaction-FV}. However, in the end, the Suliciu relaxation
solver performs better than the others.

\begin{table}[!htp]
\small
\caption{Errors and experimental order of convergence (EOC) for varying number
         of elements $N$ for the left half \eqref{eq:blastWaveLeftWC} of 
         the blast wave problem of \citet{woodward1984numerical} using several
         numerical fluxes.}
\label{tab:blastWaveLeftWC-FV}
\sisetup{
  output-exponent-marker=\text{e},
  round-mode=places,
  round-precision=2
}
\begin{center}
\begin{tabular}{r | rr|rr|rr|rr}
  \toprule
  & \multicolumn{2}{c}{Ch + SD DWGW} 
  & \multicolumn{2}{c}{Ch + MD DWGW} 
  & \multicolumn{2}{c}{Ch + HD DWGW} 
  & \multicolumn{2}{c}{Ch + LLF} 
  \\
  $N$
    & $\norm{\mathrm{err_\rho}}_M$ & EOC
    & $\norm{\mathrm{err_\rho}}_M$ & EOC
    & $\norm{\mathrm{err_\rho}}_M$ & EOC
    & $\norm{\mathrm{err_\rho}}_M$ & EOC
  \\
  \midrule
    100  &  \num{7.347e-01}  &               &  \num{7.090e-01}  &               &  \num{7.033e-01}  &               &  \num{7.350e-01}  &               \\
    200  &  \num{6.596e-01}  &  \num{ 0.16}  &  \num{6.333e-01}  &  \num{ 0.16}  &  \num{6.242e-01}  &  \num{ 0.17}  &  \num{6.594e-01}  &  \num{ 0.16}  \\
    400  &  \num{5.323e-01}  &  \num{ 0.31}  &  \num{4.919e-01}  &  \num{ 0.36}  &  \num{4.887e-01}  &  \num{ 0.35}  &  \num{5.322e-01}  &  \num{ 0.31}  \\
    800  &  \num{4.327e-01}  &  \num{ 0.30}  &  \num{3.923e-01}  &  \num{ 0.33}  &  \num{3.908e-01}  &  \num{ 0.32}  &  \num{4.326e-01}  &  \num{ 0.30}  \\
   1600  &  \num{3.507e-01}  &  \num{ 0.30}  &  \num{3.121e-01}  &  \num{ 0.33}  &  \num{3.118e-01}  &  \num{ 0.33}  &  \num{3.507e-01}  &  \num{ 0.30}  \\
   3200  &  \num{2.910e-01}  &  \num{ 0.27}  &  \num{2.579e-01}  &  \num{ 0.27}  &  \num{2.573e-01}  &  \num{ 0.28}  &  \num{2.909e-01}  &  \num{ 0.27}  \\
   6400  &  \num{2.379e-01}  &  \num{ 0.29}  &  \num{2.059e-01}  &  \num{ 0.32}  &  \num{2.070e-01}  &  \num{ 0.31}  &  \num{2.379e-01}  &  \num{ 0.29}  \\
  12800  &  \num{1.916e-01}  &  \num{ 0.31}  &          $*$      &               &  \num{1.618e-01}  &  \num{ 0.36}  &  \num{1.922e-01}  &  \num{ 0.31}  \\
  \bottomrule
  \toprule
  & \multicolumn{2}{c}{$\rho,v,\beta$ (2) + LLF}
  & \multicolumn{2}{c}{$\rho,v,\frac{1}{p}$ + LLF}
  & \multicolumn{2}{c}{$\rho,v,p$ + LLF}
  & \multicolumn{2}{c}{$\rho,v,T$ (1) + LLF}
  \\
  $N$
    & $\norm{\mathrm{err_\rho}}_M$ & EOC
    & $\norm{\mathrm{err_\rho}}_M$ & EOC
    & $\norm{\mathrm{err_\rho}}_M$ & EOC
    & $\norm{\mathrm{err_\rho}}_M$ & EOC
  \\
  \midrule
    100  &  \num{7.336e-01}  &               &          $*$      &               &          $*$      &               &  \num{7.350e-01}  &               \\
    200  &  \num{6.588e-01}  &  \num{ 0.16}  &          $*$      &               &          $*$      &               &  \num{6.594e-01}  &  \num{ 0.16}  \\
    400  &  \num{5.319e-01}  &  \num{ 0.31}  &          $*$      &               &          $*$      &               &  \num{5.322e-01}  &  \num{ 0.31}  \\
    800  &  \num{4.325e-01}  &  \num{ 0.30}  &          $*$      &               &          $*$      &               &  \num{4.326e-01}  &  \num{ 0.30}  \\
   1600  &  \num{3.506e-01}  &  \num{ 0.30}  &          $*$      &               &          $*$      &               &  \num{3.507e-01}  &  \num{ 0.30}  \\
   3200  &  \num{2.909e-01}  &  \num{ 0.27}  &          $*$      &               &          $*$      &               &  \num{2.909e-01}  &  \num{ 0.27}  \\
   6400  &  \num{2.378e-01}  &  \num{ 0.29}  &          $*$      &               &          $*$      &               &  \num{2.379e-01}  &  \num{ 0.29}  \\
  12800  &  \num{1.920e-01}  &  \num{ 0.31}  &          $*$      &               &          $*$      &               &  \num{1.922e-01}  &  \num{ 0.31}  \\
  \bottomrule
  \toprule
  & \multicolumn{2}{c}{$\rho,v,T$ (2) + LLF} 
  & \multicolumn{2}{c}{$\rho,V,T$ (rev) + LLF}
  & \multicolumn{2}{c}{LLF} 
  & \multicolumn{2}{c}{Suliciu} 
  \\
  $N$
    & $\norm{\mathrm{err_\rho}}_M$ & EOC
    & $\norm{\mathrm{err_\rho}}_M$ & EOC
    & $\norm{\mathrm{err_\rho}}_M$ & EOC
    & $\norm{\mathrm{err_\rho}}_M$ & EOC
  \\
  \midrule
    100  &  \num{7.338e-01}  &               &  \num{7.366e-01}  &               &  \num{7.424e-01}  &               &  \num{7.139e-01}  &               \\
    200  &  \num{6.587e-01}  &  \num{ 0.16}  &  \num{6.601e-01}  &  \num{ 0.16}  &  \num{6.644e-01}  &  \num{ 0.16}  &  \num{6.336e-01}  &  \num{ 0.17}  \\
    400  &  \num{5.318e-01}  &  \num{ 0.31}  &  \num{5.327e-01}  &  \num{ 0.31}  &  \num{5.374e-01}  &  \num{ 0.31}  &  \num{4.953e-01}  &  \num{ 0.36}  \\
    800  &  \num{4.325e-01}  &  \num{ 0.30}  &  \num{4.329e-01}  &  \num{ 0.30}  &  \num{4.362e-01}  &  \num{ 0.30}  &  \num{3.948e-01}  &  \num{ 0.33}  \\
   1600  &  \num{3.506e-01}  &  \num{ 0.30}  &  \num{3.509e-01}  &  \num{ 0.30}  &  \num{3.531e-01}  &  \num{ 0.30}  &  \num{3.145e-01}  &  \num{ 0.33}  \\
   3200  &  \num{2.909e-01}  &  \num{ 0.27}  &  \num{2.911e-01}  &  \num{ 0.27}  &  \num{2.922e-01}  &  \num{ 0.27}  &  \num{2.587e-01}  &  \num{ 0.28}  \\
   6400  &  \num{2.378e-01}  &  \num{ 0.29}  &  \num{2.380e-01}  &  \num{ 0.29}  &  \num{2.388e-01}  &  \num{ 0.29}  &  \num{2.080e-01}  &  \num{ 0.31}  \\
  12800  &  \num{1.921e-01}  &  \num{ 0.31}  &  \num{1.923e-01}  &  \num{ 0.31}  &  \num{1.930e-01}  &  \num{ 0.31}  &  \num{1.635e-01}  &  \num{ 0.35}  \\
  \bottomrule
\end{tabular}
\end{center}
\end{table}

\subsection{Slowly Moving Contact Discontinuity}
\label{sec:slowContact-FV}

In this section,  initial condition of the previous test case is used, but with
a non-vanishing initial velocity, resulting in a slowly moving contact discontinuity
as described by \citet[Section 6.4, Test 5]{toro2009riemann}.
The initial condition is given in primitive variables by
\begin{equation}
\label{eq:slowContact}
\begin{aligned}
  \rho_0(x) = 1
  ,\qquad
  v_0(x) = -19.59745
  ,\qquad
  p_0(x) =
  \begin{cases}
    1000, & x < \frac{4}{5}, \\
    0.01, & \text{ else},
  \end{cases}
\end{aligned}
\end{equation}
and the conservative variables are again computed via $\rho v_0 = \rho_0 v_0$ and
$\rho e_0 = \frac{1}{2} \rho_0 v_0^2 + \frac{p_0}{\gamma-1}$.
The solution is computed on the domain $[0, 1]$ from $t = 0$ until $t = 0.012$
using the finite volume methods of section \ref{sec:sod-FV}.

The results are shown in Table \ref{tab:slowContact-FV}. As in the previous section
\ref{sec:blastWaveLeftWC-FV}, the fluxes using the pressure in the density flux
are unstable. The scalar and LLF dissipation fluxes yield similar errors with
some variances across the methods, but the Suliciu solver is superior. In most
cases, it is also better than the scalar dissipation (SD).

\begin{table}[!htp]
\small
\caption{Errors and experimental order of convergence (EOC) for varying number
         of elements $N$ for the slowly moving contact discontinuity
         \eqref{eq:slowContact} using several numerical fluxes.}
\label{tab:slowContact-FV}
\sisetup{
  output-exponent-marker=\text{e},
  round-mode=places,
  round-precision=2
}
\begin{center}
\begin{tabular}{r | rr|rr|rr|rr}
  \toprule
  & \multicolumn{2}{c}{Ch + SD DWGW} 
  & \multicolumn{2}{c}{Ch + MD DWGW} 
  & \multicolumn{2}{c}{Ch + HD DWGW} 
  & \multicolumn{2}{c}{Ch + LLF} 
  \\
  $N$
    & $\norm{\mathrm{err_\rho}}_M$ & EOC
    & $\norm{\mathrm{err_\rho}}_M$ & EOC
    & $\norm{\mathrm{err_\rho}}_M$ & EOC
    & $\norm{\mathrm{err_\rho}}_M$ & EOC
  \\
  \midrule
    100  &  \num{7.843e-01}  &               &  \num{4.822e-01}  &               &  \num{6.482e-01}  &               &  \num{7.854e-01}  &               \\
    200  &  \num{6.638e-01}  &  \num{ 0.24}  &  \num{6.685e-01}  &  \num{-0.47}  &  \num{4.522e-01}  &  \num{ 0.52}  &  \num{6.645e-01}  &  \num{ 0.24}  \\
    400  &  \num{5.774e-01}  &  \num{ 0.20}  &  \num{2.377e-01}  &  \num{ 1.49}  &  \num{4.099e-01}  &  \num{ 0.14}  &  \num{5.769e-01}  &  \num{ 0.20}  \\
    800  &  \num{4.384e-01}  &  \num{ 0.40}  &  \num{3.411e-01}  &  \num{-0.52}  &  \num{2.465e-01}  &  \num{ 0.73}  &  \num{4.382e-01}  &  \num{ 0.40}  \\
   1600  &  \num{3.508e-01}  &  \num{ 0.32}  &  \num{2.320e-01}  &  \num{ 0.56}  &  \num{1.665e-01}  &  \num{ 0.57}  &  \num{3.508e-01}  &  \num{ 0.32}  \\
   3200  &  \num{2.972e-01}  &  \num{ 0.24}  &  \num{1.024e-01}  &  \num{ 1.18}  &  \num{1.401e-01}  &  \num{ 0.25}  &  \num{2.971e-01}  &  \num{ 0.24}  \\
   6400  &  \num{2.360e-01}  &  \num{ 0.33}  &  \num{1.141e-01}  &  \num{-0.16}  &  \num{8.147e-02}  &  \num{ 0.78}  &  \num{2.360e-01}  &  \num{ 0.33}  \\
  12800  &  \num{1.986e-01}  &  \num{ 0.25}  &  \num{8.589e-02}  &  \num{ 0.41}  &  \num{6.570e-02}  &  \num{ 0.31}  &  \num{1.985e-01}  &  \num{ 0.25}  \\
  \bottomrule
  \toprule
  & \multicolumn{2}{c}{$\rho,v,\beta$ (2) + LLF}
  & \multicolumn{2}{c}{$\rho,v,\frac{1}{p}$ + LLF}
  & \multicolumn{2}{c}{$\rho,v,p$ + LLF}
  & \multicolumn{2}{c}{$\rho,v,T$ (1) + LLF}
  \\
  $N$
    & $\norm{\mathrm{err_\rho}}_M$ & EOC
    & $\norm{\mathrm{err_\rho}}_M$ & EOC
    & $\norm{\mathrm{err_\rho}}_M$ & EOC
    & $\norm{\mathrm{err_\rho}}_M$ & EOC
  \\
  \midrule
    100  &  \num{7.856e-01}  &               &          $*$      &               &          $*$      &               &  \num{7.854e-01}  &               \\
    200  &  \num{6.648e-01}  &  \num{ 0.24}  &          $*$      &               &          $*$      &               &  \num{6.645e-01}  &  \num{ 0.24}  \\
    400  &  \num{5.763e-01}  &  \num{ 0.21}  &          $*$      &               &          $*$      &               &  \num{5.769e-01}  &  \num{ 0.20}  \\
    800  &  \num{4.380e-01}  &  \num{ 0.40}  &          $*$      &               &          $*$      &               &  \num{4.382e-01}  &  \num{ 0.40}  \\
   1600  &  \num{3.507e-01}  &  \num{ 0.32}  &          $*$      &               &          $*$      &               &  \num{3.508e-01}  &  \num{ 0.32}  \\
   3200  &  \num{2.969e-01}  &  \num{ 0.24}  &          $*$      &               &          $*$      &               &  \num{2.971e-01}  &  \num{ 0.24}  \\
   6400  &  \num{2.359e-01}  &  \num{ 0.33}  &          $*$      &               &          $*$      &               &  \num{2.360e-01}  &  \num{ 0.33}  \\
  12800  &  \num{1.985e-01}  &  \num{ 0.25}  &          $*$      &               &          $*$      &               &  \num{1.985e-01}  &  \num{ 0.25}  \\
  \bottomrule
  \toprule
  & \multicolumn{2}{c}{$\rho,v,T$ (2) + LLF} 
  & \multicolumn{2}{c}{$\rho,V,T$ (rev) + LLF}
  & \multicolumn{2}{c}{LLF} 
  & \multicolumn{2}{c}{Suliciu} 
  \\
  $N$
    & $\norm{\mathrm{err_\rho}}_M$ & EOC
    & $\norm{\mathrm{err_\rho}}_M$ & EOC
    & $\norm{\mathrm{err_\rho}}_M$ & EOC
    & $\norm{\mathrm{err_\rho}}_M$ & EOC
  \\
  \midrule
    100  &  \num{7.860e-01}  &               &  \num{7.910e-01}  &               &  \num{7.960e-01}  &               &  \num{5.084e-01}  &               \\
    200  &  \num{6.649e-01}  &  \num{ 0.24}  &  \num{6.661e-01}  &  \num{ 0.25}  &  \num{6.768e-01}  &  \num{ 0.23}  &  \num{2.799e-01}  &  \num{ 0.86}  \\
    400  &  \num{5.767e-01}  &  \num{ 0.21}  &  \num{5.772e-01}  &  \num{ 0.21}  &  \num{5.840e-01}  &  \num{ 0.21}  &  \num{2.848e-01}  &  \num{-0.03}  \\
    800  &  \num{4.381e-01}  &  \num{ 0.40}  &  \num{4.382e-01}  &  \num{ 0.40}  &  \num{4.445e-01}  &  \num{ 0.39}  &  \num{1.453e-01}  &  \num{ 0.97}  \\
   1600  &  \num{3.507e-01}  &  \num{ 0.32}  &  \num{3.508e-01}  &  \num{ 0.32}  &  \num{3.548e-01}  &  \num{ 0.33}  &  \num{9.770e-02}  &  \num{ 0.57}  \\
   3200  &  \num{2.970e-01}  &  \num{ 0.24}  &  \num{2.971e-01}  &  \num{ 0.24}  &  \num{2.994e-01}  &  \num{ 0.24}  &  \num{9.025e-02}  &  \num{ 0.11}  \\
   6400  &  \num{2.360e-01}  &  \num{ 0.33}  &  \num{2.360e-01}  &  \num{ 0.33}  &  \num{2.374e-01}  &  \num{ 0.33}  &  \num{4.810e-02}  &  \num{ 0.91}  \\
  12800  &  \num{1.985e-01}  &  \num{ 0.25}  &  \num{1.985e-01}  &  \num{ 0.25}  &  \num{1.994e-01}  &  \num{ 0.25}  &  \num{3.946e-02}  &  \num{ 0.29}  \\
  \bottomrule
\end{tabular}
\end{center}
\end{table}

\subsection{Right Half of the Blast Wave Problem of Woodward and Colella}
\label{sec:blastWaveRightWC-FV}

In this section, the right half of the blast wave problem of
\citet[Section IV.a]{woodward1984numerical} as described by
\citet[Section 4.3.3, Test 4]{toro2009riemann} is considered. In primitive variables,
it is given by
\begin{equation}
\label{eq:blastWaveRightWC}
\begin{aligned}
  \rho_0(x) = 1
  ,\qquad
  v_0(x) = 0
  ,\qquad
  p_0(x) =
  \begin{cases}
    0.01, & x < \frac{1}{2}, \\
    100, & \text{ else},
  \end{cases}
\end{aligned}
\end{equation}
and the conservative variables are again computed via $\rho v_0 = \rho_0 v_0$ and
$\rho e_0 = \frac{1}{2} \rho_0 v_0^2 + \frac{p_0}{\gamma-1}$.
The solution is computed on the domain $[0, 1]$ from $t = 0$ until $t = 0.035$
using again the finite volume methods described in section \ref{sec:sod-FV}.

The results are shown in Table \ref{tab:blastWaveRightWC-FV}. As before, the
fluxes with variables $\rho, v$ and $p$ or $\frac{1}{p}$ are unstable, and
the other fluxes yield similar errors, while the matrix dissipation and Suliciu
solver perform a bit better than the others.

\begin{table}[!htp]
\small
\caption{Errors and experimental order of convergence (EOC) for varying number
         of elements $N$ for the right half \eqref{eq:blastWaveRightWC} of 
         the blast wave problem of \citet{woodward1984numerical} using several
         numerical fluxes.}
\label{tab:blastWaveRightWC-FV}
\sisetup{
  output-exponent-marker=\text{e},
  round-mode=places,
  round-precision=2
}
\begin{center}
\begin{tabular}{r | rr|rr|rr|rr}
  \toprule
  & \multicolumn{2}{c}{Ch + SD DWGW} 
  & \multicolumn{2}{c}{Ch + MD DWGW} 
  & \multicolumn{2}{c}{Ch + HD DWGW} 
  & \multicolumn{2}{c}{Ch + LLF} 
  \\
  $N$
    & $\norm{\mathrm{err_\rho}}_M$ & EOC
    & $\norm{\mathrm{err_\rho}}_M$ & EOC
    & $\norm{\mathrm{err_\rho}}_M$ & EOC
    & $\norm{\mathrm{err_\rho}}_M$ & EOC
  \\
  \midrule
    100  &  \num{7.253e-01}  &               &  \num{6.904e-01}  &               &  \num{6.888e-01}  &               &  \num{7.259e-01}  &               \\
    200  &  \num{6.451e-01}  &  \num{ 0.17}  &  \num{6.112e-01}  &  \num{ 0.18}  &  \num{6.047e-01}  &  \num{ 0.19}  &  \num{6.451e-01}  &  \num{ 0.17}  \\
    400  &  \num{5.259e-01}  &  \num{ 0.29}  &  \num{4.905e-01}  &  \num{ 0.32}  &  \num{4.865e-01}  &  \num{ 0.31}  &  \num{5.257e-01}  &  \num{ 0.30}  \\
    800  &  \num{4.378e-01}  &  \num{ 0.26}  &  \num{4.016e-01}  &  \num{ 0.29}  &  \num{3.990e-01}  &  \num{ 0.29}  &  \num{4.376e-01}  &  \num{ 0.26}  \\
   1600  &  \num{3.575e-01}  &  \num{ 0.29}  &  \num{3.240e-01}  &  \num{ 0.31}  &  \num{3.221e-01}  &  \num{ 0.31}  &  \num{3.573e-01}  &  \num{ 0.29}  \\
   3200  &  \num{2.856e-01}  &  \num{ 0.32}  &  \num{2.531e-01}  &  \num{ 0.36}  &  \num{2.525e-01}  &  \num{ 0.35}  &  \num{2.855e-01}  &  \num{ 0.32}  \\
   6400  &  \num{2.329e-01}  &  \num{ 0.29}  &  \num{2.020e-01}  &  \num{ 0.33}  &  \num{2.028e-01}  &  \num{ 0.32}  &  \num{2.329e-01}  &  \num{ 0.29}  \\
  12800  &  \num{1.877e-01}  &  \num{ 0.31}  &  \num{1.643e-01}  &  \num{ 0.30}  &  \num{1.588e-01}  &  \num{ 0.35}  &  \num{1.881e-01}  &  \num{ 0.31}  \\
  \bottomrule
  \toprule
  & \multicolumn{2}{c}{$\rho,v,\beta$ (2) + LLF}
  & \multicolumn{2}{c}{$\rho,v,\frac{1}{p}$ + LLF}
  & \multicolumn{2}{c}{$\rho,v,p$ + LLF}
  & \multicolumn{2}{c}{$\rho,v,T$ (1) + LLF}
  \\
  $N$
    & $\norm{\mathrm{err_\rho}}_M$ & EOC
    & $\norm{\mathrm{err_\rho}}_M$ & EOC
    & $\norm{\mathrm{err_\rho}}_M$ & EOC
    & $\norm{\mathrm{err_\rho}}_M$ & EOC
  \\
  \midrule
    100  &  \num{7.243e-01}  &               &          $*$      &               &          $*$      &               &  \num{7.259e-01}  &               \\
    200  &  \num{6.443e-01}  &  \num{ 0.17}  &          $*$      &               &          $*$      &               &  \num{6.451e-01}  &  \num{ 0.17}  \\
    400  &  \num{5.254e-01}  &  \num{ 0.29}  &          $*$      &               &          $*$      &               &  \num{5.257e-01}  &  \num{ 0.30}  \\
    800  &  \num{4.375e-01}  &  \num{ 0.26}  &          $*$      &               &          $*$      &               &  \num{4.376e-01}  &  \num{ 0.26}  \\
   1600  &  \num{3.573e-01}  &  \num{ 0.29}  &          $*$      &               &          $*$      &               &  \num{3.573e-01}  &  \num{ 0.29}  \\
   3200  &  \num{2.855e-01}  &  \num{ 0.32}  &          $*$      &               &          $*$      &               &  \num{2.855e-01}  &  \num{ 0.32}  \\
   6400  &  \num{2.328e-01}  &  \num{ 0.29}  &          $*$      &               &          $*$      &               &  \num{2.329e-01}  &  \num{ 0.29}  \\
  12800  &  \num{1.880e-01}  &  \num{ 0.31}  &          $*$      &               &          $*$      &               &  \num{1.881e-01}  &  \num{ 0.31}  \\
  \bottomrule
  \toprule
  & \multicolumn{2}{c}{$\rho,v,T$ (2) + LLF} 
  & \multicolumn{2}{c}{$\rho,V,T$ (rev) + LLF}
  & \multicolumn{2}{c}{LLF} 
  & \multicolumn{2}{c}{Suliciu} 
  \\
  $N$
    & $\norm{\mathrm{err_\rho}}_M$ & EOC
    & $\norm{\mathrm{err_\rho}}_M$ & EOC
    & $\norm{\mathrm{err_\rho}}_M$ & EOC
    & $\norm{\mathrm{err_\rho}}_M$ & EOC
  \\
  \midrule
    100  &  \num{7.245e-01}  &               &  \num{7.279e-01}  &               &  \num{7.349e-01}  &               &  \num{7.003e-01}  &               \\
    200  &  \num{6.443e-01}  &  \num{ 0.17}  &  \num{6.462e-01}  &  \num{ 0.17}  &  \num{6.519e-01}  &  \num{ 0.17}  &  \num{6.159e-01}  &  \num{ 0.19}  \\
    400  &  \num{5.254e-01}  &  \num{ 0.29}  &  \num{5.261e-01}  &  \num{ 0.30}  &  \num{5.302e-01}  &  \num{ 0.30}  &  \num{4.923e-01}  &  \num{ 0.32}  \\
    800  &  \num{4.375e-01}  &  \num{ 0.26}  &  \num{4.379e-01}  &  \num{ 0.26}  &  \num{4.407e-01}  &  \num{ 0.27}  &  \num{4.026e-01}  &  \num{ 0.29}  \\
   1600  &  \num{3.572e-01}  &  \num{ 0.29}  &  \num{3.575e-01}  &  \num{ 0.29}  &  \num{3.590e-01}  &  \num{ 0.30}  &  \num{3.240e-01}  &  \num{ 0.31}  \\
   3200  &  \num{2.855e-01}  &  \num{ 0.32}  &  \num{2.857e-01}  &  \num{ 0.32}  &  \num{2.869e-01}  &  \num{ 0.32}  &  \num{2.540e-01}  &  \num{ 0.35}  \\
   6400  &  \num{2.328e-01}  &  \num{ 0.29}  &  \num{2.330e-01}  &  \num{ 0.29}  &  \num{2.338e-01}  &  \num{ 0.30}  &  \num{2.038e-01}  &  \num{ 0.32}  \\
  12800  &  \num{1.880e-01}  &  \num{ 0.31}  &  \num{1.882e-01}  &  \num{ 0.31}  &  \num{1.890e-01}  &  \num{ 0.31}  &  \num{1.604e-01}  &  \num{ 0.35}  \\
  \bottomrule
\end{tabular}
\end{center}
\end{table}

\subsection{Left Half of the Blast Wave Problem of Derigs, Winters, Gassner and Walch}
\label{sec:DWGWleft-FV}

In this section, the left half of the blast wave problem of
\citet[Section 6]{derigs2016novelAveraging} is considered. In primitive variables,
it is given by
\begin{equation}
\label{eq:DWGWleft}
\begin{aligned}
  \rho_0(x) = 1
  ,\qquad
  v_0(x) = 10
  ,\qquad
  p_0(x) =
  \begin{cases}
    1, & x < -\frac{1}{10}, \\
    10^{-6}, & \text{ else}.
  \end{cases}
\end{aligned}
\end{equation}
Here, $\gamma = \frac{5}{3}$ is used.
The solution is computed on the domain $[-1, 1]$ from $t = 0$ until $t = \num{5.e-2}$
using the finite volume methods of section \ref{sec:sod-FV}.

The results are shown in Table \ref{tab:DWGWleft-FV}.
Designed as a test case to crash the flux of \citet{roe2006affordable}, the fluxes
containing pressure influence in the density flux are unstable. However, there
is nearly no variance across the other fluxes that remain stable, since the problem
needs a very high resolution to capture the solution.

Contrary to the results of \citet{derigs2016novelAveraging} for the MHD equations,
the simple LLF dissipation is enough to stabilise the solution for the Euler
equations in this case and their specially designed dissipation operator does
not show any improvement over the LLF dissipation.

\begin{table}[!htp]
\small
\caption{Errors and experimental order of convergence (EOC) for varying number
         of elements $N$ for the left half \eqref{eq:DWGWleft} of 
         the blast wave problem of \citet{derigs2016novelAveraging} using several
         numerical fluxes.}
\label{tab:DWGWleft-FV}
\sisetup{
  output-exponent-marker=\text{e},
  round-mode=places,
  round-precision=2
}
\begin{center}
\begin{tabular}{r | rr|rr|rr|rr}
  \toprule
  & \multicolumn{2}{c}{Ch + SD DWGW} 
  & \multicolumn{2}{c}{Ch + MD DWGW} 
  & \multicolumn{2}{c}{Ch + HD DWGW} 
  & \multicolumn{2}{c}{Ch + LLF} 
  \\
  $N$
    & $\norm{\mathrm{err_\rho}}_M$ & EOC
    & $\norm{\mathrm{err_\rho}}_M$ & EOC
    & $\norm{\mathrm{err_\rho}}_M$ & EOC
    & $\norm{\mathrm{err_\rho}}_M$ & EOC
  \\
  \midrule
    100  &  \num{4.324e-01}  &               &  \num{4.326e-01}  &               &  \num{4.325e-01}  &               &  \num{4.326e-01}  &               \\
    200  &  \num{3.131e-01}  &  \num{ 0.47}  &  \num{3.126e-01}  &  \num{ 0.47}  &  \num{3.127e-01}  &  \num{ 0.47}  &  \num{3.132e-01}  &  \num{ 0.47}  \\
    400  &  \num{3.082e-01}  &  \num{ 0.02}  &  \num{3.074e-01}  &  \num{ 0.02}  &  \num{3.076e-01}  &  \num{ 0.02}  &  \num{3.083e-01}  &  \num{ 0.02}  \\
    800  &  \num{2.640e-01}  &  \num{ 0.22}  &  \num{2.629e-01}  &  \num{ 0.23}  &  \num{2.631e-01}  &  \num{ 0.23}  &  \num{2.641e-01}  &  \num{ 0.22}  \\
   1600  &  \num{2.834e-01}  &  \num{-0.10}  &  \num{2.822e-01}  &  \num{-0.10}  &  \num{2.824e-01}  &  \num{-0.10}  &  \num{2.836e-01}  &  \num{-0.10}  \\
   3200  &  \num{2.596e-01}  &  \num{ 0.13}  &  \num{2.583e-01}  &  \num{ 0.13}  &  \num{2.584e-01}  &  \num{ 0.13}  &  \num{2.597e-01}  &  \num{ 0.13}  \\
   6400  &  \num{2.240e-01}  &  \num{ 0.21}  &  \num{2.227e-01}  &  \num{ 0.21}  &  \num{2.227e-01}  &  \num{ 0.21}  &  \num{2.240e-01}  &  \num{ 0.21}  \\
  12800  &  \num{1.781e-01}  &  \num{ 0.33}  &  \num{1.766e-01}  &  \num{ 0.33}  &  \num{1.766e-01}  &  \num{ 0.33}  &  \num{1.782e-01}  &  \num{ 0.33}  \\
  \bottomrule
  \toprule
  & \multicolumn{2}{c}{$\rho,v,\beta$ (2) + LLF}
  & \multicolumn{2}{c}{$\rho,v,\frac{1}{p}$ + LLF}
  & \multicolumn{2}{c}{$\rho,v,p$ + LLF}
  & \multicolumn{2}{c}{$\rho,v,T$ (1) + LLF}
  \\
  $N$
    & $\norm{\mathrm{err_\rho}}_M$ & EOC
    & $\norm{\mathrm{err_\rho}}_M$ & EOC
    & $\norm{\mathrm{err_\rho}}_M$ & EOC
    & $\norm{\mathrm{err_\rho}}_M$ & EOC
  \\
  \midrule
    100  &  \num{4.325e-01}  &               &          $*$      &               &          $*$      &               &  \num{4.326e-01}  &               \\
    200  &  \num{3.131e-01}  &  \num{ 0.47}  &          $*$      &               &          $*$      &               &  \num{3.132e-01}  &  \num{ 0.47}  \\
    400  &  \num{3.082e-01}  &  \num{ 0.02}  &          $*$      &               &          $*$      &               &  \num{3.083e-01}  &  \num{ 0.02}  \\
    800  &  \num{2.640e-01}  &  \num{ 0.22}  &          $*$      &               &          $*$      &               &  \num{2.641e-01}  &  \num{ 0.22}  \\
   1600  &  \num{2.834e-01}  &  \num{-0.10}  &          $*$      &               &          $*$      &               &  \num{2.836e-01}  &  \num{-0.10}  \\
   3200  &  \num{2.596e-01}  &  \num{ 0.13}  &          $*$      &               &          $*$      &               &  \num{2.597e-01}  &  \num{ 0.13}  \\
   6400  &  \num{2.239e-01}  &  \num{ 0.21}  &          $*$      &               &          $*$      &               &  \num{2.240e-01}  &  \num{ 0.21}  \\
  12800  &  \num{1.781e-01}  &  \num{ 0.33}  &          $*$      &               &          $*$      &               &  \num{1.782e-01}  &  \num{ 0.33}  \\
  \bottomrule
  \toprule
  & \multicolumn{2}{c}{$\rho,v,T$ (2) + LLF} 
  & \multicolumn{2}{c}{$\rho,V,T$ (rev) + LLF}
  & \multicolumn{2}{c}{LLF} 
  & \multicolumn{2}{c}{Suliciu} 
  \\
  $N$
    & $\norm{\mathrm{err_\rho}}_M$ & EOC
    & $\norm{\mathrm{err_\rho}}_M$ & EOC
    & $\norm{\mathrm{err_\rho}}_M$ & EOC
    & $\norm{\mathrm{err_\rho}}_M$ & EOC
  \\
  \midrule
    100  &  \num{4.325e-01}  &               &  \num{4.326e-01}  &               &  \num{4.328e-01}  &               &  \num{4.331e-01}  &               \\
    200  &  \num{3.131e-01}  &  \num{ 0.47}  &  \num{3.132e-01}  &  \num{ 0.47}  &  \num{3.134e-01}  &  \num{ 0.47}  &  \num{3.130e-01}  &  \num{ 0.47}  \\
    400  &  \num{3.082e-01}  &  \num{ 0.02}  &  \num{3.083e-01}  &  \num{ 0.02}  &  \num{3.086e-01}  &  \num{ 0.02}  &  \num{3.079e-01}  &  \num{ 0.02}  \\
    800  &  \num{2.640e-01}  &  \num{ 0.22}  &  \num{2.641e-01}  &  \num{ 0.22}  &  \num{2.644e-01}  &  \num{ 0.22}  &  \num{2.634e-01}  &  \num{ 0.23}  \\
   1600  &  \num{2.834e-01}  &  \num{-0.10}  &  \num{2.836e-01}  &  \num{-0.10}  &  \num{2.839e-01}  &  \num{-0.10}  &  \num{2.827e-01}  &  \num{-0.10}  \\
   3200  &  \num{2.596e-01}  &  \num{ 0.13}  &  \num{2.597e-01}  &  \num{ 0.13}  &  \num{2.600e-01}  &  \num{ 0.13}  &  \num{2.586e-01}  &  \num{ 0.13}  \\
   6400  &  \num{2.239e-01}  &  \num{ 0.21}  &  \num{2.240e-01}  &  \num{ 0.21}  &  \num{2.241e-01}  &  \num{ 0.21}  &  \num{2.228e-01}  &  \num{ 0.22}  \\
  12800  &  \num{1.781e-01}  &  \num{ 0.33}  &  \num{1.782e-01}  &  \num{ 0.33}  &  \num{1.782e-01}  &  \num{ 0.33}  &  \num{1.766e-01}  &  \num{ 0.33}  \\
  \bottomrule
\end{tabular}
\end{center}
\end{table}

\subsection{Right Half of the Blast Wave Problem of Derigs, Winters, Gassner and Walch}
\label{sec:DWGWright-FV}

In this section, the right half of the blast wave problem of
\citet[Section 6]{derigs2016novelAveraging} is considered. In primitive variables,
it is given by
\begin{equation}
\label{eq:DWGWright}
\begin{aligned}
  \rho_0(x) = 1
  ,\qquad
  v_0(x) = 10
  ,\qquad
  p_0(x) =
  \begin{cases}
    10^{-6}, & x < \frac{1}{10}, \\
    1, & \text{ else}.
  \end{cases}
\end{aligned}
\end{equation}
Here, $\gamma = \frac{5}{3}$ is used.
The solution is computed on the domain $[-1, 1]$ from $t = 0$ until $t = \num{5.e-2}$
using again the same finite volume methods as before.

The results are shown in Table \ref{tab:DWGWright-FV}. The results are similar
to the left half of this problem in section \ref{sec:DWGWleft-FV}: The pressure
influence in the density flux results in unstable schemes while all other methods
yield similar errors and are stable.

\begin{table}[!htp]
\small
\caption{Errors and experimental order of convergence (EOC) for varying number
         of elements $N$ for the right half \eqref{eq:DWGWright} of 
         the blast wave problem of \citet{derigs2016novelAveraging} using several
         numerical fluxes.}
\label{tab:DWGWright-FV}
\sisetup{
  output-exponent-marker=\text{e},
  round-mode=places,
  round-precision=2
}
\begin{center}
\begin{tabular}{r | rr|rr|rr|rr}
  \toprule
  & \multicolumn{2}{c}{Ch + SD DWGW} 
  & \multicolumn{2}{c}{Ch + MD DWGW} 
  & \multicolumn{2}{c}{Ch + HD DWGW} 
  & \multicolumn{2}{c}{Ch + LLF} 
  \\
  $N$
    & $\norm{\mathrm{err_\rho}}_M$ & EOC
    & $\norm{\mathrm{err_\rho}}_M$ & EOC
    & $\norm{\mathrm{err_\rho}}_M$ & EOC
    & $\norm{\mathrm{err_\rho}}_M$ & EOC
  \\
  \midrule
    100  &  \num{4.332e-01}  &               &  \num{4.323e-01}  &               &  \num{4.326e-01}  &               &  \num{4.332e-01}  &               \\
    200  &  \num{3.134e-01}  &  \num{ 0.47}  &  \num{3.128e-01}  &  \num{ 0.47}  &  \num{3.131e-01}  &  \num{ 0.47}  &  \num{3.134e-01}  &  \num{ 0.47}  \\
    400  &  \num{3.085e-01}  &  \num{ 0.02}  &  \num{3.073e-01}  &  \num{ 0.03}  &  \num{3.077e-01}  &  \num{ 0.02}  &  \num{3.085e-01}  &  \num{ 0.02}  \\
    800  &  \num{2.640e-01}  &  \num{ 0.22}  &  \num{2.625e-01}  &  \num{ 0.23}  &  \num{2.628e-01}  &  \num{ 0.23}  &  \num{2.640e-01}  &  \num{ 0.22}  \\
   1600  &  \num{2.829e-01}  &  \num{-0.10}  &  \num{2.798e-01}  &  \num{-0.09}  &  \num{2.804e-01}  &  \num{-0.09}  &  \num{2.830e-01}  &  \num{-0.10}  \\
   3200  &  \num{2.584e-01}  &  \num{ 0.13}  &  \num{2.541e-01}  &  \num{ 0.14}  &  \num{2.548e-01}  &  \num{ 0.14}  &  \num{2.584e-01}  &  \num{ 0.13}  \\
   6400  &  \num{2.227e-01}  &  \num{ 0.21}  &  \num{2.176e-01}  &  \num{ 0.22}  &  \num{2.182e-01}  &  \num{ 0.22}  &  \num{2.227e-01}  &  \num{ 0.21}  \\
  12800  &  \num{1.787e-01}  &  \num{ 0.32}  &  \num{1.730e-01}  &  \num{ 0.33}  &  \num{1.735e-01}  &  \num{ 0.33}  &  \num{1.787e-01}  &  \num{ 0.32}  \\
  \bottomrule
  \toprule
  & \multicolumn{2}{c}{$\rho,v,\beta$ (2) + LLF}
  & \multicolumn{2}{c}{$\rho,v,\frac{1}{p}$ + LLF}
  & \multicolumn{2}{c}{$\rho,v,p$ + LLF}
  & \multicolumn{2}{c}{$\rho,v,T$ (1) + LLF}
  \\
  $N$
    & $\norm{\mathrm{err_\rho}}_M$ & EOC
    & $\norm{\mathrm{err_\rho}}_M$ & EOC
    & $\norm{\mathrm{err_\rho}}_M$ & EOC
    & $\norm{\mathrm{err_\rho}}_M$ & EOC
  \\
  \midrule
    100  &  \num{4.332e-01}  &               &          $*$      &               &          $*$      &               &  \num{4.332e-01}  &               \\
    200  &  \num{3.135e-01}  &  \num{ 0.47}  &          $*$      &               &          $*$      &               &  \num{3.134e-01}  &  \num{ 0.47}  \\
    400  &  \num{3.086e-01}  &  \num{ 0.02}  &          $*$      &               &          $*$      &               &  \num{3.085e-01}  &  \num{ 0.02}  \\
    800  &  \num{2.641e-01}  &  \num{ 0.22}  &          $*$      &               &          $*$      &               &  \num{2.640e-01}  &  \num{ 0.22}  \\
   1600  &  \num{2.831e-01}  &  \num{-0.10}  &          $*$      &               &          $*$      &               &  \num{2.830e-01}  &  \num{-0.10}  \\
   3200  &  \num{2.586e-01}  &  \num{ 0.13}  &          $*$      &               &          $*$      &               &  \num{2.584e-01}  &  \num{ 0.13}  \\
   6400  &  \num{2.229e-01}  &  \num{ 0.21}  &          $*$      &               &          $*$      &               &  \num{2.227e-01}  &  \num{ 0.21}  \\
  12800  &  \num{1.788e-01}  &  \num{ 0.32}  &          $*$      &               &          $*$      &               &  \num{1.787e-01}  &  \num{ 0.32}  \\
  \bottomrule
  \toprule
  & \multicolumn{2}{c}{$\rho,v,T$ (2) + LLF} 
  & \multicolumn{2}{c}{$\rho,V,T$ (rev) + LLF}
  & \multicolumn{2}{c}{LLF} 
  & \multicolumn{2}{c}{Suliciu} 
  \\
  $N$
    & $\norm{\mathrm{err_\rho}}_M$ & EOC
    & $\norm{\mathrm{err_\rho}}_M$ & EOC
    & $\norm{\mathrm{err_\rho}}_M$ & EOC
    & $\norm{\mathrm{err_\rho}}_M$ & EOC
  \\
  \midrule
    100  &  \num{4.332e-01}  &               &  \num{4.333e-01}  &               &  \num{4.333e-01}  &               &  \num{4.323e-01}  &               \\
    200  &  \num{3.135e-01}  &  \num{ 0.47}  &  \num{3.135e-01}  &  \num{ 0.47}  &  \num{3.133e-01}  &  \num{ 0.47}  &  \num{3.127e-01}  &  \num{ 0.47}  \\
    400  &  \num{3.086e-01}  &  \num{ 0.02}  &  \num{3.086e-01}  &  \num{ 0.02}  &  \num{3.084e-01}  &  \num{ 0.02}  &  \num{3.072e-01}  &  \num{ 0.03}  \\
    800  &  \num{2.641e-01}  &  \num{ 0.22}  &  \num{2.640e-01}  &  \num{ 0.22}  &  \num{2.638e-01}  &  \num{ 0.23}  &  \num{2.623e-01}  &  \num{ 0.23}  \\
   1600  &  \num{2.831e-01}  &  \num{-0.10}  &  \num{2.830e-01}  &  \num{-0.10}  &  \num{2.828e-01}  &  \num{-0.10}  &  \num{2.796e-01}  &  \num{-0.09}  \\
   3200  &  \num{2.586e-01}  &  \num{ 0.13}  &  \num{2.584e-01}  &  \num{ 0.13}  &  \num{2.583e-01}  &  \num{ 0.13}  &  \num{2.540e-01}  &  \num{ 0.14}  \\
   6400  &  \num{2.229e-01}  &  \num{ 0.21}  &  \num{2.228e-01}  &  \num{ 0.21}  &  \num{2.227e-01}  &  \num{ 0.21}  &  \num{2.176e-01}  &  \num{ 0.22}  \\
  12800  &  \num{1.788e-01}  &  \num{ 0.32}  &  \num{1.787e-01}  &  \num{ 0.32}  &  \num{1.788e-01}  &  \num{ 0.32}  &  \num{1.730e-01}  &  \num{ 0.33}  \\
  \bottomrule
\end{tabular}
\end{center}
\end{table}

\subsection{Another Blast Wave Problem}
\label{sec:criticalExplosion-FV}

In this section, another blast wave problem is considered. In primitive variables,
it is given by
\begin{equation}
\label{eq:criticalExplosion}
\begin{aligned}
  \rho_0(x) = \frac{1}{10}
  ,\qquad
  v_0(x) = 10
  ,\qquad
  p_0(x) =
  \begin{cases}
    10^{-12}, & x < -\frac{1}{2}, \\
    10^{-3}, & \text{ else}.
  \end{cases}
\end{aligned}
\end{equation}
Here, $\gamma = \frac{5}{3}$ is used.
The solution is computed on the domain $[-1, 1]$ from $t = 0$ until $t = 0.1$
using the same set of FV methods as in the previous test cases.

To the author's knowledge, this test problem has not been used before, and is
designed to show the importance of positivity preserving for the pressure.
As can be seen in the results shown in Table \ref{tab:criticalExplosion-FV},
the new scalar and matrix dissipation operators of \citet{derigs2016novelAveraging,
winters2016uniquely} are not stable for this problem. Indeed, they result in
negative pressures. However, the simple LLF dissipation that has been reported
to be less stable than these dissipation operators by \citet{derigs2016novelAveraging}
for the MHD equations remains stable in this test case.

Of course, the fluxes containing an influence of the pressure in the density flux
are unstable. The remaining fluxes (with LLF dissipation and Suliciu) are all
stable and result in the same error (up to two digits of precision).

As another example demonstrating the positivity preserving issue for the pressure,
explicit Euler FV steps \eqref{eq:FV} using the entropy conservative flux
\eqref{eq:Chandrashekar-rho-v-beta-EC-KEP} of \citet{chandrashekar2013kinetic}
with scalar and matrix dissipation operators by \citet{derigs2016novelAveraging,
winters2016uniquely} as well as LLF dissipation, respectively, have been performed
with the states
\begin{equation}
\label{eq:pos-pres-test}
  \begin{pmatrix}
    \rho \\ v \\ p
  \end{pmatrix}_{i-1}
  = 
  \begin{pmatrix}
    \rho \\ v \\ p
  \end{pmatrix}_{i}
  =
  \begin{pmatrix}
    \num{0.1} \\ \num{10} \\ \num{1.e-12}
  \end{pmatrix},
  \qquad
  \begin{pmatrix}
    \rho \\ v \\ p
  \end{pmatrix}_{i+1}
  =
  \begin{pmatrix}
    \num{10} \\ \num{10} \\ \num{1.e-6}
  \end{pmatrix}.
\end{equation}
As can be seen in Figure \ref{fig:chandrashekar_DWGW}, the pressure becomes
negative for both the scalar [$\frac{\Delta t}{\Delta x} \lessapprox \num{0.3e-12}$]
and the matrix dissipation [$\frac{\Delta t}{\Delta x} \lessapprox \num{0.1e-12}$]
operator, while the characteristic speeds on the left ($i-1, i$) and right ($i+1$)
hand side are $v = 10$, $c_{i} = \sqrt{ \gamma \frac{p_i}{\rho_i} } = \sqrt{10^{-11} \gamma}$,
and $c_{i+1} = \sqrt{10^{-13} \gamma}$. Thus, there does not seem to be a reasonable
CFL condition for these two fluxes and for this initial condition.
Contrary, the LLF dissipation operator results in a positive pressure.

\begin{figure}[!htp]
\centering
\ifx\useTikzForPlotting\undefined
  \includegraphics{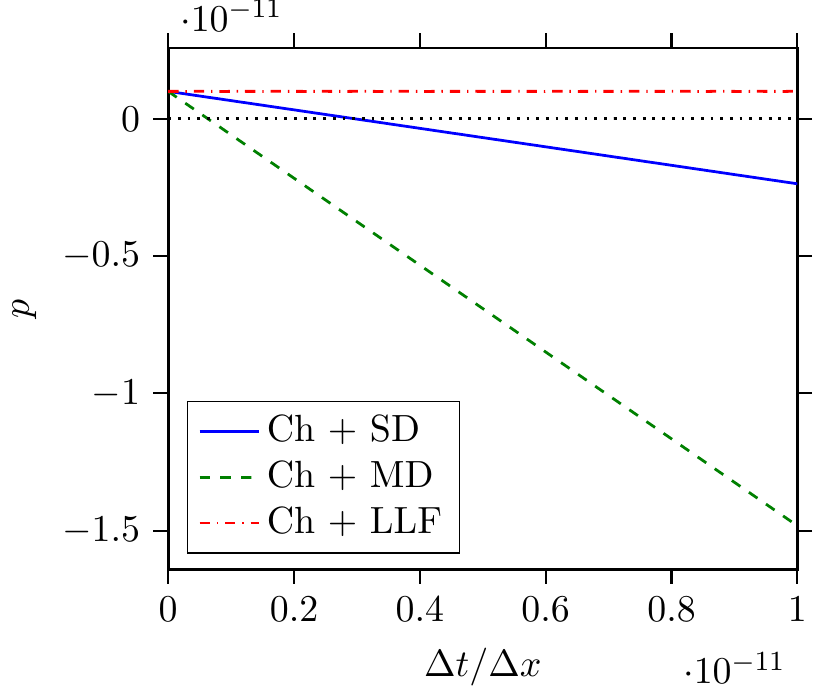}
\else
  \tikzsetnextfilename{report_chandrashekar_DWGW}
  \begin{tikzpicture}
    \definecolor{plt_blue}{HTML}{0000FF}
    \definecolor{plt_green}{HTML}{008000}
    \definecolor{plt_red}{HTML}{FF0000}
    \begin{axis}[
      xlabel={$\Delta t / \Delta x$},
      ylabel={$p$},
      xmin=0,
      xmax=1.e-11,
      legend cell align=left,
      legend pos=south west,
      width=0.5\textwidth,
      axis line style=thick,
      every tick/.style={semithick},
      tick align=outside,
    ]
      \addplot[smooth, plt_blue, solid, thick] table[x index=0, y index=1] {figures/chandrashekar_DWGW.dat};
      \addlegendentry{Ch + SD}
      
      \addplot[smooth, plt_green, dashed, thick] table[x index=0, y index=2] {figures/chandrashekar_DWGW.dat};
      \addlegendentry{Ch + MD}
      
      \addplot[smooth, plt_red, dashdotted, thick] table[x index=0, y index=3] {figures/chandrashekar_DWGW.dat};
      \addlegendentry{Ch + LLF}
      
      \addplot[dotted, domain=0:1.e-11, thick] {0.};
      
    \end{axis}
  \end{tikzpicture}
\fi
  \caption{Pressure depending on the step size ration $\frac{\Delta t}{\Delta x}$
           for one FV step \eqref{eq:FV} using a scalar (solid), matrix (dashed),
           and LLF (dash-dotted) dissipation operator for the initial
           condition \eqref{eq:pos-pres-test}.}
  \label{fig:chandrashekar_DWGW}
\end{figure}

\begin{table}[!htp]
\small
\caption{Errors and experimental order of convergence (EOC) for varying number
         of elements $N$ for the blast wave \eqref{eq:criticalExplosion}
         using several numerical fluxes.}
\label{tab:criticalExplosion-FV}
\sisetup{
  output-exponent-marker=\text{e},
  round-mode=places,
  round-precision=2
}
\begin{center}
\begin{tabular}{r | rr|rr|rr|rr}
  \toprule
  & \multicolumn{2}{c}{Ch + SD DWGW} 
  & \multicolumn{2}{c}{Ch + MD DWGW} 
  & \multicolumn{2}{c}{Ch + HD DWGW} 
  & \multicolumn{2}{c}{Ch + LLF} 
  \\
  $N$
    & $\norm{\mathrm{err_\rho}}_M$ & EOC
    & $\norm{\mathrm{err_\rho}}_M$ & EOC
    & $\norm{\mathrm{err_\rho}}_M$ & EOC
    & $\norm{\mathrm{err_\rho}}_M$ & EOC
  \\
  \midrule
    100  &          $*$      &               &          $*$      &               &          $*$      &               &  \num{1.732e-03}  &               \\
    200  &          $*$      &               &          $*$      &               &          $*$      &               &  \num{5.815e-03}  &  \num{-1.75}  \\
    400  &          $*$      &               &          $*$      &               &          $*$      &               &  \num{4.657e-03}  &  \num{ 0.32}  \\
    800  &          $*$      &               &          $*$      &               &          $*$      &               &  \num{2.543e-02}  &  \num{-2.45}  \\
   1600  &          $*$      &               &          $*$      &               &          $*$      &               &  \num{1.834e-02}  &  \num{ 0.47}  \\
   3200  &          $*$      &               &          $*$      &               &          $*$      &               &  \num{1.832e-02}  &  \num{ 0.00}  \\
   6400  &          $*$      &               &          $*$      &               &          $*$      &               &  \num{1.823e-02}  &  \num{ 0.01}  \\
  12800  &          $*$      &               &          $*$      &               &          $*$      &               &  \num{1.778e-02}  &  \num{ 0.04}  \\
  \bottomrule
  \toprule
  & \multicolumn{2}{c}{$\rho,v,\beta$ (2) + LLF}
  & \multicolumn{2}{c}{$\rho,v,\frac{1}{p}$ + LLF}
  & \multicolumn{2}{c}{$\rho,v,p$ + LLF}
  & \multicolumn{2}{c}{$\rho,v,T$ (1) + LLF}
  \\
  $N$
    & $\norm{\mathrm{err_\rho}}_M$ & EOC
    & $\norm{\mathrm{err_\rho}}_M$ & EOC
    & $\norm{\mathrm{err_\rho}}_M$ & EOC
    & $\norm{\mathrm{err_\rho}}_M$ & EOC
  \\
  \midrule
    100  &  \num{1.732e-03}  &               &          $*$      &               &          $*$      &               &  \num{1.732e-03}  &               \\
    200  &  \num{5.815e-03}  &  \num{-1.75}  &          $*$      &               &          $*$      &               &  \num{5.815e-03}  &  \num{-1.75}  \\
    400  &  \num{4.657e-03}  &  \num{ 0.32}  &          $*$      &               &          $*$      &               &  \num{4.657e-03}  &  \num{ 0.32}  \\
    800  &  \num{2.543e-02}  &  \num{-2.45}  &          $*$      &               &          $*$      &               &  \num{2.543e-02}  &  \num{-2.45}  \\
   1600  &  \num{1.834e-02}  &  \num{ 0.47}  &          $*$      &               &          $*$      &               &  \num{1.834e-02}  &  \num{ 0.47}  \\
   3200  &  \num{1.832e-02}  &  \num{ 0.00}  &          $*$      &               &          $*$      &               &  \num{1.832e-02}  &  \num{ 0.00}  \\
   6400  &  \num{1.823e-02}  &  \num{ 0.01}  &          $*$      &               &          $*$      &               &  \num{1.823e-02}  &  \num{ 0.01}  \\
  12800  &  \num{1.778e-02}  &  \num{ 0.04}  &          $*$      &               &          $*$      &               &  \num{1.778e-02}  &  \num{ 0.04}  \\
  \bottomrule
  \toprule
  & \multicolumn{2}{c}{$\rho,v,T$ (2) + LLF} 
  & \multicolumn{2}{c}{$\rho,V,T$ (rev) + LLF}
  & \multicolumn{2}{c}{LLF} 
  & \multicolumn{2}{c}{Suliciu} 
  \\
  $N$
    & $\norm{\mathrm{err_\rho}}_M$ & EOC
    & $\norm{\mathrm{err_\rho}}_M$ & EOC
    & $\norm{\mathrm{err_\rho}}_M$ & EOC
    & $\norm{\mathrm{err_\rho}}_M$ & EOC
  \\
  \midrule
    100  &  \num{1.732e-03}  &               &  \num{1.732e-03}  &               &  \num{1.732e-03}  &               &  \num{1.733e-03}  &               \\
    200  &  \num{5.815e-03}  &  \num{-1.75}  &  \num{5.815e-03}  &  \num{-1.75}  &  \num{5.815e-03}  &  \num{-1.75}  &  \num{5.818e-03}  &  \num{-1.75}  \\
    400  &  \num{4.657e-03}  &  \num{ 0.32}  &  \num{4.657e-03}  &  \num{ 0.32}  &  \num{4.657e-03}  &  \num{ 0.32}  &  \num{4.660e-03}  &  \num{ 0.32}  \\
    800  &  \num{2.543e-02}  &  \num{-2.45}  &  \num{2.543e-02}  &  \num{-2.45}  &  \num{2.543e-02}  &  \num{-2.45}  &  \num{2.543e-02}  &  \num{-2.45}  \\
   1600  &  \num{1.834e-02}  &  \num{ 0.47}  &  \num{1.834e-02}  &  \num{ 0.47}  &  \num{1.834e-02}  &  \num{ 0.47}  &  \num{1.834e-02}  &  \num{ 0.47}  \\
   3200  &  \num{1.832e-02}  &  \num{ 0.00}  &  \num{1.832e-02}  &  \num{ 0.00}  &  \num{1.832e-02}  &  \num{ 0.00}  &  \num{1.831e-02}  &  \num{ 0.00}  \\
   6400  &  \num{1.823e-02}  &  \num{ 0.01}  &  \num{1.823e-02}  &  \num{ 0.01}  &  \num{1.823e-02}  &  \num{ 0.01}  &  \num{1.823e-02}  &  \num{ 0.01}  \\
  12800  &  \num{1.778e-02}  &  \num{ 0.04}  &  \num{1.778e-02}  &  \num{ 0.04}  &  \num{1.777e-02}  &  \num{ 0.04}  &  \num{1.776e-02}  &  \num{ 0.04}  \\
  \bottomrule
\end{tabular}
\end{center}
\end{table}

\subsection{Summary of the Numerical Results}
\label{subsec:summary-numerical-tests}

There are three main results of these numerical tests. Firstly, none of the entropy
conservative fluxes not including an influence of the pressure in the density flux 
seems to be clearly superior to the others.

Secondly, the entropy conservative volume fluxes result in schemes that are more
robust for discontinuous solutions than the schemes using the other fluxes.
Nevertheless, coupling entropy conservative volume fluxes with dissipative surface
fluxes is not sufficient for strong shocks. Thus, these results should be considered
carefully, since no additional shock capturing mechanisms --- which will be needed
in practice --- have been used.

Finally, enhancing the entropy conservative fluxes not using the pressure in the
density flux by a local Lax-Friedrichs type dissipation $-\frac{\lambda}{2} \jump{u}$
is more robust concerning positivity of the pressure than dissipation operators 
that have been developed for the MHD equations and transferred to the Euler 
equations.

\section{Summary and Conclusions}
\label{sec:summary}

After formulating a general procedure to develop affordable entropy conservative
fluxes, several new numerical fluxes for the Euler equations have been developed 
in sections~\ref{sec:fluxes} and \ref{sec:reversed-fluxes} and compared with
existing ones in two kinds of application.

Firstly, the entropy conservative fluxes can be used as building blocks of entropy
stable high-order schemes using the flux differencing form of Fisher and Carpenter
\cite{fisher2013high}.
In section~\ref{sec:flux-diff}, the high order of accuracy of the flux differencing
form has been proven for consistent and symmetric numerical fluxes, extending the
known theory of \cite{fisher2013high}. Moreover, entropy conservation and stability
has been investigated in a framework of generalised SBP operators applicable to
multiple dimensions and simplex elements. This last extension may be possible, but
to the author's knowledge, there are no SBP operators on simplices in general
fulfilling the conditions used there. Although these may exist, they will probably
require more nodes per element and could therefore be less efficient.

Moreover, numerical tests have been performed using the flux differencing form 
and several different volume fluxes. There does not seem to be any clearly superior candidate
outperforming the other ones in all cases. Whereas for smooth solutions some not
entropy conservative volume fluxes performed better than their entropy conservative
counterparts, this is different for the considered discontinuous solutions. 
Here, the entropy conservative volume fluxes yielded schemes that were more stable,
i.e. that did not crash (due to negative density or pressure or other reasons).
Nevertheless, coupling entropy conservative volume fluxes with dissipative surface
fluxes is not sufficient for strong shocks. Thus, these results should be considered
carefully, since no additional shock capturing mechanisms --- which will be needed
in practice --- have been used.

Secondly, entropy conservative numerical fluxes can be used as surface fluxes in
flux differencing form / discontinuous Galerkin / finite volume methods. There, 
they should be enhanced by additional dissipation operators. 
In section~\ref{sec:surface-fluxes}, positivity preservation has been investigated.
It has been proven that most of the entropy conservative fluxes preserve
non-negativity of the density, if they are enhanced with local Lax-Friedrichs
type dissipation operators.

Moreover, the (scalar, matrix, and hybrid) dissipation operators of Derigs et al. \cite{derigs2016novelAveraging,
winters2016uniquely} have been tested and compared with a simple local Lax-Friedrichs 
dissipation $-\frac{\lambda}{2} \jump{u}$ for the entropy conservative fluxes as
well as with the classical LLF flux and the Suliciu relaxation solver of 
Bouchut~\cite{bouchut2004nonlinear}.
In some problems, the hybrid and matrix dissipation operators yield similar
results regarding stability and accuracy as the Suliciu solver, but they are
less stable in general, as has been demonstrated in section~\ref{sec:criticalExplosion-FV}.
Therefore, the LLF dissipation $-\frac{\lambda}{2} \jump{u}$ seems to be advantageous 
compared to the scalar dissipation operator regarding stability, contrary to the
results of \cite{derigs2016novelAveraging} for the MHD equations, where specifically
tuned dissipation operators were more stable than the LLF dissipation.

However, investigating performance of numerical fluxes, the costs have to be considered.
Here, the implementation has not been optimised for every flux in detail, but
the Suliciu relaxation solver is the second cheapest one after the LLF flux.
The fluxes relying on an entropy conservative baseline flux are significantly
more expansive. Thus, the Suliciu relaxation solver of Bouchut~\cite{bouchut2004nonlinear}
seems to be the best one in this comparison.

There are many open problems. Firstly, the positivity of the pressure using the
LLF dissipation has been observed in all test cases but no analytical proof has
been conducted yet. Another possibility is the addition of dissipation for the
variables $\rho, \rho v, \rho s$ followed by a conversion to the usual conserved 
variables as described by Bouchut~\cite[Section 2.4.6]{bouchut2004nonlinear}.

Moreover, it has still to be investigated thoroughly in what regard the entropy
conservative fluxes as ingredients in the flux differencing framework have
advantages compared to the split forms tested by Gassner et al. \cite{gassner2016split}.
Additionally, it is still unclear, whether there are some superior entropy
conservative fluxes or cheaper ones.

Furthermore, the implications of (semidiscretely) entropy stable schemes have to
be investigated. To the authors' knowledge, there are no general convergence
results about high-order schemes for nonlinear systems of conservation laws in 
several space dimensions. Entropy stability, implying $L_2$ bounds if strict 
positivity of density and pressure are ensured, alone does not suffice to prove
convergence, since such $L_2$ bounds cannot prevent oscillations can in general.

\appendix

\section*{Acknowledgements}
The author would like to thank the anonymous reviewers for their helpful comments.

\printbibliography

\end{document}